\newtheorem{proposition}{Proposition}[section]
\newtheorem{lemma}{Lemma}[section]
\newtheorem{corollary}{Corollary}[section]
\newtheorem{definition}{Definition}[section]
\newtheorem{remark}{Remark}[section]
\newtheorem{question}{Question}
\newtheorem{theorem}{Theorem}[section]
\newtheorem*{theoremA}{Theorem A}
\newtheorem*{theoremB}{Theorem B}
\newtheorem*{theoremC}{Theorem C}
\newtheorem*{theoremD}{Theorem D}
\newcommand{\norm}[1]{\left\Vert#1\right\Vert}
\newcommand{\conorm}[1]{\mathrm{m}\left(#1\right)}
\newcommand{\Real}{\mathbb R}
\newcommand{\Tor}{\mathbb{T}^2}
\newcommand{\Pp}{\mathbb P}
\newcommand{\R}{\mathbb{R}}
\newcommand{\Z}{\mathbb{Z}}
\newcommand{\N}{\mathbb{N}}
\newcommand{\TT}{\mathbb{T}^2}
\newcommand{\RR}{\mathbb{R}^2}
\newcommand{\ZZ}{\mathbb{Z}^2}
\newcommand{\hTTf}[1][f]{\mathbf{L}_{#1}}
\newcommand{\cF}{\mathcal{F}}
\newcommand{\tf}{\tilde{f}}
\newcommand{\tx}{\tilde{x}}
\newcommand{\ty}{\tilde{y}}
\newcommand{\tmu}{\tilde{\mu}}
\newcommand{\bomega}{\boldsymbol{\omega}}
\newcommand{\hf}{\hat{f}}
\newcommand{\bmu}{\boldsymbol{\mu}}
\newcommand{\tbmu}{\tilde{\bmu}}
\newcommand{\hmu}{\hat{\mu}}
\newcommand{\hx}[1][x]{\hat{#1}}
\newcommand{\hy}{\hat{y}}
\newcommand{\teta}{\tilde{\eta}}
\newcommand{\tbeta}{\tilde{\boldsymbol{\eta}}}
\newcommand{\Mat}{\mathrm{M}_{2 \times 2}(\mathbb{Z})}
\newcommand{\reg}{\mathcal{R}}
\DeclareMathOperator{\GL}{GL}
\DeclareMathOperator*{\essinf}{ess\,inf}
\newcommand{\pcov}{\pi_{\mathrm{cov}}}
\newcommand{\pext}{\pi_{\mathrm{ext}}}
\newcommand{\Deh}{\Delta_{\alpha}^h}
\newcommand{\Dev}{\Delta_{\alpha}^v}
\newcommand{\Sol}{\operatorname{Sol}}
\newcommand{\proj}{\operatorname{proj}}
\newcommand{\fsol}{Sf}
\newcommand{\bx}{\mathbf{x}}
\newcommand{\Wuloc}[1]{W^{u}_{\mathrm{loc}}(#1)}
\newcommand{\Wsloc}[1]{W^{s}_{\mathrm{loc}}(#1)}
\newcommand{\hWuloc}[1]{\hat{W}^{u}_{\mathrm{loc}}(#1)}
\newcommand{\hWsloc}[1]{\hat{W}^{s}_{\mathrm{loc}}(#1)}
\newcommand{\Leb}{\mathrm{Leb}}
\newcommand{\subjclass}[2][2020]{%
  \let\@oldtitle\@title%
  \gdef\@title{\@oldtitle\footnotetext{#1 \emph{MSC2020 classification:} #2}}%
}
\newcommand{\keywords}[1]{%
  \let\@@oldtitle\@title%
  \gdef\@title{\@@oldtitle\footnotetext{\emph{Keywords:} #1.}}%
}
\begin{document}

 \title{Non-uniformly hyperbolic endomorphisms}
  \author[1]{Martin Andersson \thanks{Partially supported by AMSUD220029}} 	
  \author[2]{Pablo D. Carrasco \thanks{Partially supported by CNPq 403041/2021-0 AMSUD220029}}
  \author[3]{Radu Saghin \thanks{Partially supported by Fondecyt 1230632, AMSUD220029}}

\affil[1]{GMA-UFF, Rua Professor Marcos Waldemar de Freitas Reis, S/Nº
Campus do Gragoatá, Blocos G e H, Niterói – RJ, BR24210-201}
 \affil[2]{ICEx-UFMG, Avda. Presidente Antonio Carlos 6627, Belo Horizonte-MG, BR31270-90}
 \affil[3]{Pontificia Universidad Cat\'olica de Valpara\'{\i}so, Blanco Viel 596,
 Cerro Bar\'on, Valpara\'{\i}so-Chile.}

  \date{\today}
  
  \subjclass{37D25, 37A05}
  \keywords{Non-uniform Hyperbolicity, Lyapunov Exponents, Stable Ergodicity}  
  \maketitle

\selectlanguage{english}
\begin{abstract}

We show the existence of large $\mathcal C^1$ open sets of area preserving endomorphisms of the two-torus which have no dominated splitting and are non-uniformly hyperbolic, meaning that Lebesgue almost every point has a positive and a negative Lyapunov exponent. The integrated Lyapunov exponents vary continuously with the dynamics in the $\mathcal C^1$ topology and can be taken as far away from zero as desired. Explicit real analytic examples are obtained by deforming linear endomorphisms, including expanding ones. The technique works in nearly every homotopy class and the examples are stably ergodic (in fact Bernoulli), provided that the linear map has no eigenvalue of modulus one.
\end{abstract}

\section{Introduction and Results} 
\label{sec:introduction_and_results}

In this article we deal with conservative maps on the two-torus $\Tor$, that is, maps preserving the Lebesgue (Haar) measure $\mu$. In smooth ergodic theory one is typically interested in the asymptotic behavior exhibited by the system in consideration, and a basic (but nonetheless, powerful) mechanism for the analysis is provided by the Lyapunov exponents. For completeness we will recall the definition below, but we assume some familiarity of the reader with these concepts, as covered for example in \cite{Barreira2013}. We will, however, focus in the two dimensional case and discuss only this setting.

\begin{definition}
For a differentiable map $f:\Tor\to\Tor$ and a vector $(x,v)\in T\Tor$ with $v\neq 0$, the quantity
\begin{align*}
\bar{\chi}(x,v)=\limsup_{n\to\infty}\frac{\log\norm{D_xf^n(v)}}{n}
\end{align*}
is the Lyapunov exponent of $f$ at $(x,v)$.
\end{definition}
Classical results (as for example \cite{FurstKest60}) show that one can find a full area set $M'$ such that for points $x\in M'$ the previous limit exists (we denote it as $\chi(x,v)$) for every $v\neq 0$, and coincides with either of the following two quantities\footnote{For a matrix $A$, $\conorm{A}=\inf\{\norm{Av}:\norm{v}=1\}$ denotes its conorm.}:
\begin{align*}
\chi^+(x)&=\lim_{n\to+\infty}\frac{\log\norm{D_xf^n}}{n}\\
\chi^-(x)&=\lim_{n\to+\infty}\frac{\log\conorm{D_xf^n}}{n}.
\end{align*}
In fact, due to Oseledet's theorem \cite{Oseledets} there exists a measurable bundle $E^-$ defined in a full measure set $M_0$ such that for $x\in M_0, 0\neq v\in E^{-}(x)\Rightarrow \chi(x,v)=\chi^{-}(x), v\in \RR\setminus E^{-}(x)\Rightarrow \chi(x,v)=\chi^{+}(x)$. One can check that
\[
	\int (\chi^{+}(x)+\chi^{-}(x)) d\mu(p)=\int \log |\det D_xf| d\mu(x)>0
\]
so $\chi^{+}(x)>0$ almost everywhere.

\begin{definition}
The map $f$ is non-uniformly hyperbolic (NUH) if $\chi^-<0<\chi^+$ $\mu$-almost everywhere.
\end{definition}

Non-uniform hyperbolicity means that there is exponential divergence of orbits, both in the past and in the future. This notion, introduced by Y. Pesin, generalizes classical hyperbolicity, and it is difficult to overemphasize its importance in modern dynamics. 

For NUH diffeomorphisms, a basic feature is that even though they can be found on any surface \cite{BerSurf} (and indeed, on any closed manifold of dimension larger than one \cite{EveryCompHyp}), typically they are fragile in the following sense: any conservative surface diffeomorphism $f$ can be approximated in the $\mathcal{C}^1$ topology by a diffeomorphism having zero exponents, unless $f$ is Anosov. This is consequence of the Bochi-Ma\~n\'e theorem \cite{BOCHI2002}, and reveals a rigidity phenomena for such systems.

It is remarkable that the corresponding question for endomorphisms has not been addressed before, but this can be explained by the surplus of additional difficulties that the non-invertible case presents, leading to a much less developed theory than its corresponding invertible counterpart. The aim of this paper is to make a contribution to the ergodic theory of non-invertible endomorphisms of $\Tor$, and present interesting new phenomena that perhaps will motivate additional research.

In contrast to the case of diffeomorphisms, we will establish the existence of $\mathcal C^1$ robust NUH endomorphisms in $\Tor$, without essentially any topological obstruction. These examples share several similarities with hyperbolic diffeomorphisms. On the one hand, their integrated Lyapunov exponents (and therefore their exponents in the ergodic case) depend continuously with the map. We point out to the reader that, due to the Bochi-Mañé theorem cited before, this is not true for diffeomorphisms except for Anosov ones. On the other hand, assuming transitivity of their linear part (which can be easily checked by looking at the eigenvalues of the associated matrix), they are ergodic (even Bernoulli, see \ref{sub:stable_ergodicity}), and they remain so under $\mathcal C^2$ perturbations. Therefore, the class of systems introduced in this article seems to have enough robustness to permit a throughout analysis, and we hope to motivate the reader to explore this new realm.

\subsection{Existence of robust NUH endomorphisms} 
\label{sub:existence_of_persistent_nuh_endomorphisms}

Any map $f:\Tor\to\Tor$ is homotopic to a linear endomorphism $E$, induced by an  integer matrix that we denote by the same letter. In this work, we are interested in non-invertible local diffeomorphisms and will therefore only consider the case when $|\det E | \geq 2$. Considering the projective action $\Pp E: \Pp \RR \to \Pp \RR$ that induces $E$, we have three possibilities:
\begin{itemize}
	\item $\Pp E=\mathrm{Id}$, or
	\item there exists a parabolic fix point $p\in \Pp \RR$, that is repelling on one side and attracting on the other, or
	\item there exist two hyperbolic fix points $p,q\in \Pp \RR$, and the dynamics is of type north pole - south pole, or
	\item $\Pp E$ is conjugated to a rotation.
\end{itemize}

Note that the first corresponds to the case when $E$ is an homothety.

If $f:\Tor\to\Tor$ is of class $\mathcal C^1$ we consider the class of $\mathcal C^1$ smoothly homotopic maps to $f$,
\[
	[f]=\{g:\Tor\to\Tor: \text{there exists a }\mathcal C^1\text{ homotopy between $f$ and $g$}\},
\]
We denote by $\mathrm{End}_{\mu}^r(\TT)$ the set of $\mathcal C^r$ local diffeomorphisms of $\TT$ preserving the Lebesgue measure $\mu$, that are not invertible. For $f\in \mathrm{End}^1_{\mu}(\TT), (x,v)\in T^1\TT$ we define 
\begin{equation}\label{eq:I}
I(x,v;f^n)=\sum_{y\in f^{-n}(x)}\frac{\log\|(D_yf^n)^{-1}v\|}{|\det(D_yf^n)|}
\end{equation}
and let
\begin{equation}\label{eq:Cf}
C_{\chi}(f)=\sup_{n\in\mathbb N}\frac 1n\inf_{(x,v)\in T^1\TT}I(x,v;f^n).
\end{equation}
The set
\begin{equation}\label{eq:U}
\mathcal U:=\{f\in \mathrm{End}^1_{\mu}(\TT):\ C_{\chi}(f)>0\}
\end{equation}
is open in the $\mathcal C^1$ topology: indeed, if $f\in\mathcal U$ then for some $n$ it follows $\frac 1n\inf_{(x,v)\in T^1\TT}I(x,v;f^n)>\frac{C_{\chi}(f)}{2}$, hence taking a sufficiently small $\mathcal C^1$ neighborhood $\mathcal V$ of $f$ we can guarantee that any $g\in\mathcal V$ is an endomorphisms, and satisfies $\frac 1n\inf_{(x,v)\in T^1\TT}I(x,v;g^n)>\frac{C_{\chi}(f)}{4}$, thus impliying that $g\in \mathcal U$.

Our first main result is the following.

\begin{theoremA}\hypertarget{theoremA}{}
Any $f\in \mathcal U$ is non-uniformly hyperbolic. Moreover, if $E=(e_{ij})\in\Mat$ is not a homothety and $|\det E|/\gcd{e_{ij}}>4$, then the intersection $[E]\cap \mathcal U$ is non-empty, and in fact contains maps that are real analytically homotopic to $E$. 
\end{theoremA}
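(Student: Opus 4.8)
The plan is to prove the two assertions in turn: first, that membership in $\mathcal U$ already forces non‑uniform hyperbolicity; second, that $[E]\cap\mathcal U$ contains explicit analytic deformations of $E$.

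\emph{First assertion.} Since $\chi^{+}>0$ a.e.\ is established in the excerpt, it suffices to show $\chi^{-}<0$ a.e. I would pass to the natural extension $\hf\colon\widehat{\TT}\to\widehat{\TT}$, $\hf(\ldots,x_{-1},x_{0})=(\ldots,x_{0},f(x_{0}))$, with the $\hf$‑invariant lift $\hmu$ of $\mu$, and use the standard fact that the conditional measures of $\hmu$ along the fibers $\{x_{0}=\mathrm{const}\}$ assign mass $\det(D_{y}f^{n})^{-1}$ to the cylinder $\{x_{-n}=y\}$ for each $y\in f^{-n}(x_{0})$ (these sum to $1$ because $f$ preserves $\mu$), so that
\begin{equation*}
I(x,v;f^{n})=\int_{\{x_{0}=x\}}\log\norm{(D_{x_{-n}}f^{n})^{-1}v}\,d\hmu_{x}.
\end{equation*}
Next observe that $a_{n}:=\inf_{(x,v)\in T^{1}\TT}I(x,v;f^{n})$ is \emph{super-additive}: splitting an $f^{n+m}$-orbit after the first $m$ iterates and using the chain rule together with $\sum_{y\in f^{-m}(z)}\det(D_{y}f^{m})^{-1}=1$ gives $I(x,v;f^{n+m})\ge I(x,v;f^{n})+a_{m}$, hence $a_{n+m}\ge a_{n}+a_{m}$. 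Thus $C_{\chi}(f)=\lim_{n}a_{n}/n$, and choosing $N$ with $a_{N}>0$ yields $\tfrac1{kN}I(x,v;f^{kN})\ge a_{N}/N>0$ for all $(x,v)$ and all $k$. On the other hand, by Kingman's and Birkhoff's theorems $\tfrac1n\log\norm{(D_{x_{-n}}f^{n})^{-1}}\to-\chi^{-}(x_{0})$ for $\hmu$-a.e.\ $\hat x$, so for any fixed unit $v$ the integrands above have $\limsup_{n}\le-\chi^{-}(x_{0})$ while being bounded above uniformly by $\log\max_{z}\norm{(D_{z}f)^{-1}}$; reverse Fatou on each fiber then gives $\limsup_{n}\tfrac1n I(x,v;f^{n})\le-\chi^{-}(x)$ a.e. Comparing with the lower bound along $n=kN$ forces $-\chi^{-}(x)\ge a_{N}/N>0$ a.e., i.e.\ $\chi^{-}<0$ a.e., which with $\chi^{+}>0$ a.e.\ is NUH.

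\emph{Second assertion.} Write $d=\abs{\det E}$, $g=\gcd\{e_{ij}\}$. I would look for $f$ of the form $f=E\circ\varphi$ with $\varphi$ a real analytic area-preserving diffeomorphism isotopic to the identity; then $f$ is a real analytic $d$-to-$1$ local diffeomorphism with $\abs{\det Df}\equiv d$, hence preserves $\mu$, and $t\mapsto E\circ\varphi_{t}$ ($\varphi_{0}=\mathrm{id}$, $\varphi_{1}=\varphi$) is a real analytic homotopy from $E$ to $f$, so $f\in[E]$. For such $f$, if $\{z_{1},\ldots,z_{d}\}=E^{-1}(x)$ then $f^{-1}(x)=\{\varphi^{-1}(z_{i})\}$ and $(D_{\varphi^{-1}(z_{i})}f)^{-1}=D_{z_{i}}\psi\cdot E^{-1}$ with $\psi=\varphi^{-1}$, so
\begin{equation*}
I(x,v;f)=\frac1d\sum_{i=1}^{d}\log\norm{D_{z_{i}}\psi\,(E^{-1}v)}.
\end{equation*}
I would then take $\psi=\psi_{R}$ a suitable composition of analytic shears, tuned so that outside a small controlled exceptional set $D_{z}\psi_{R}$ is a strong linear contraction/expansion (ratio $\sim R$) whose contracting direction varies with $z$; for $R$ large, most of the $d$ terms above are $\sim\log R>0$ and the rest are bounded below ($O(1)$ on the exceptional set, and $\gtrsim-\log R$ when $E^{-1}v$ is nearly aligned with a contracting direction). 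Now $E^{-1}(x)$ is a coset of $E^{-1}(\ZZ)/\ZZ\cong\Z/g\oplus\Z/(d/g)$, i.e.\ a grid with $d/g$ layers of $g$ points each; one checks that the exceptional and aligned behavior affects only a bounded number of layers, that is $O(g)$ of the $d$ preimages, so $d/g>4$ is exactly the threshold making the $\sim\log R$ terms dominate and giving $I(x,v;f)\ge\delta>0$ for all $(x,v)$. Since $T^{1}\TT$ is compact and $I(\,\cdot\,;f)$ continuous, $\inf I(\,\cdot\,;f)>0$, so $f\in\mathcal U$ and $C_{\chi}(f)>0$, with $f$ real analytically homotopic to $E$.

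\emph{Main obstacle.} The first assertion is soft once the natural‑extension identity and the super‑additivity are in place. The difficulty is concentrated in the second: one must produce $\varphi$ that is simultaneously real analytic, area preserving, isotopic to the identity, and whose derivative is — off a controllable small set — a strong shear with contracting directions spread out enough that no single direction $E^{-1}v$ is aligned with too many of the $d$ preimages of any point; and one must then carry out the arithmetic over the grid $E^{-1}(x)$ that turns $\abs{\det E}/\gcd\{e_{ij}\}>4$ into $\inf_{(x,v)}I(x,v;f)>0$. That construction and estimate is the crux of the proof.
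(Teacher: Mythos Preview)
Your treatment of the first assertion is essentially the paper's argument (Proposition~\ref{pro:existeexpneg}): the natural-extension identity for $I(x,v;f^n)$, super-additivity (the paper phrases this via a convex-combination formula, Lemma~\ref{lem:combinacionconvexa}, but the content is the same), and then a limiting argument on the fibers. That part is fine.

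The second assertion, however, has a real gap. You assert that with a suitable analytic shear composition one can arrange $\inf_{(x,v)}I(x,v;f)>0$ already at $n=1$, and that the ``aligned'' behavior affects only $O(g)$ of the $d$ preimages. This is exactly what the paper's construction \emph{cannot} achieve and does not claim. With the single shear $f_t=E\circ h_t$ (after a Smith normal form change of coordinates and the non-homothety condition used to ensure $E^{-1}\Delta^v_\alpha\subset\Delta^h_\alpha$), the paper shows: for $v$ in the \emph{vertical} cone all preimages in the good region $\mathcal G$ expand and stay vertical, but for $v$ in the \emph{horizontal} cone only the preimages in one half $\mathcal G^{*(v)}$ of $\mathcal G$ are sent into $\Delta^v_\alpha$ (Lemma~\ref{lem:estimativasst}). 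Thus for horizontal $v$ roughly \emph{half} of the $d$ preimages are bad, not $O(g)$, and the paper's own Corollary~\ref{cor:estimativasst} shows $I(x,v;f)$ can be of order $-c\log t$ for such $v$. So the $n=1$ positivity you aim for simply fails in the construction that actually works.

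What the paper does instead is iterate. It tracks, for each $(x,v)$, the proportion $a_n=g_n/d^n$ of $n$-th preimages $(y,w)$ with $w\in\Delta^v_\alpha$, proves the recursion $a_{n+1}\ge c\,a_n+e$ from Lemma~\ref{lem:good}, and deduces $\liminf a_n\ge \frac{[\frac{\tau_2-1}{2}]}{1+[\frac{\tau_2-1}{2}]}$ (Lemma~\ref{lem:proporcion}). Plugging this and Corollary~\ref{cor:estimativasst} into the convex-combination decomposition $I(x,v;f^n)=\sum_{i=0}^{n-1}J_i$ gives $\liminf_i J_i\ge \frac{[\frac{\tau_2-1}{2}]-1}{[\frac{\tau_2-1}{2}]+1}\log t+C$, and it is precisely this coefficient being positive that forces $\tau_2\ge 5$, i.e.\ $|\det E|/\gcd\{e_{ij}\}>4$. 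So the arithmetic threshold you invoke does not come from a layer count at $n=1$; it comes from the asymptotics of the iterated cone dynamics. Your sketch would need either this iteration argument or a genuinely different construction (which you do not supply) in which the contracted directions of $D\psi$ are so well spread over the grid $E^{-1}(x)$ that no direction $E^{-1}v$ is aligned with more than $O(g)$ of them --- and that, together with the role of the non-homothety hypothesis, is precisely the missing idea.
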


This in particular establishes the existence of $\mathcal C^1$ open sets of NUH endomorphisms. The above is a flexibility theorem: different from the diffeomorphism case, there are no topological obstructions for the existence of robust NUH endomorphisms in $\Tor$. We also point out this is the first robustly NUH (area preserving) example of endomorphism, that it is not expanding, and does not admit an invariant distribution.

\begin{remark}\label{rem:hipotesisteoA}
If $\tau_1(E)=\gcd{e_{ij}}, \tau_2(E)=|\det E|/\tau_1(E)$,  then the Smith Normal Form of $E$ is the diagonal matrix with entries $\tau_1(E)$ and $\tau_2(E)$ (see subsection \ref{ss:Smith}). The condition on $\tau_2(E)$ from \hyperlink{theoremA}{Theorem A} is valid for all pairs $(\tau_1(E),\tau_2(E))$ with the exception of those in
 \[
 	\{(1,2), (1,3), (1,4), (2, 2), (2,4), (3,3), (4,4)\}.
 \] 
This requirement on $\tau_2(E)$ and the non-homothety assumption are made to avoid some technical difficulties,
 and we expect that with additional work this restriction can be lifted to any (non-invertible) endomorphism; see Subsection \ref{subs:furtherQ}. Let us also point out that our methods give effective bounds for the $\mathcal{C}^1$ distance between constructed elements $f\in [E]\cap \mathcal U$ and $E$.
\end{remark}

Observe the following curious consequence.

\begin{corollary}
If $F:\Tor\to\Tor$ is a $\mathcal{C}^1$ uniformly expanding map of degree larger than $16$ that is not homotopic to a homothety, then it is a factor (semi-conjugate) of a smooth conservative map $f$ that is non-uniformly hyperbolic with one positive and one negative Lyapunov exponent.
\end{corollary}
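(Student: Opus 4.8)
The plan is to manufacture $f$ directly by applying Theorem A to the linear part of $F$, and then to build the semiconjugacy by a contraction argument exploiting uniform expansion. First I would let $E=(e_{ij})\in\Mat$ be the linear part of $F$, so that $F\in[E]$ and $d:=\deg F=|\det E|>16$; since $F$ is not homotopic to a homothety, neither is $E$. Setting $\tau_1=\gcd(e_{ij})$ and $\tau_2=d/\tau_1$, Remark~\ref{rem:hipotesisteoA} tells us that $\tau_1\mid\tau_2$, hence $\tau_1^2\mid\tau_1\tau_2=d$, so $\tau_1\le\sqrt d$ and consequently $\tau_2=d/\tau_1\ge\sqrt d>4$. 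Therefore $E$ satisfies the hypotheses of Theorem A, and I would pick $f\in[E]\cap\mathcal U$; by that theorem $f$ can be taken real analytic (hence smooth), it is conservative, and being in $\mathcal U$ it is non-uniformly hyperbolic, so $\chi^-(f)<0<\chi^+(f)$ $\mu$-almost everywhere, i.e. $f$ has one negative and one positive Lyapunov exponent at a.e. point.

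It then remains to exhibit a continuous surjection $h:\Tor\to\Tor$ with $h\circ f=F\circ h$. I would lift $f$ and $F$ to maps $\tilde f,\tilde F:\RR\to\RR$ (here $\RR=\mathbb R^2$); since both lie in $[E]$, each of $\tilde f-\tilde E$ and $\tilde F-\tilde E$ is $\ZZ^2$-periodic, so $\tilde f-\tilde F$ is $\ZZ^2$-periodic and bounded. Because $F$ is a local diffeomorphism, $\tilde F$ is a local homeomorphism; it is also proper, since $\tilde F-\tilde E$ is bounded and $\tilde E$ is a linear isomorphism of $\RR$ ($\det E\neq0$), and a proper local homeomorphism of the simply connected plane is a global homeomorphism. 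Uniform expansion of $F$ provides an adapted Riemannian metric whose geodesic distance $\rho$ on $\RR$ is $\ZZ^2$-periodic, equivalent to the Euclidean one, and in which $\tilde F^{-1}$ is a $\lambda$-contraction with $\lambda<1$. I would then work in the complete metric space $\mathcal X$ of continuous $G:\RR\to\RR$ with $G(x+n)=G(x)+n$ for all $n\in\ZZ^2$ and $\sup_x\rho(G(x),x)<\infty$, metrized by $\sup_x\rho(G_1(x),G_2(x))$, and consider the graph transform $\mathcal T(G)=\tilde F^{-1}\circ G\circ\tilde f$. One checks $\mathcal T(\mathcal X)\subset\mathcal X$ — equivariance is immediate from $\tilde f(x+n)=\tilde f(x)+En$ and $\tilde F(x+n)=\tilde F(x)+En$, and boundedness of $\mathcal T(G)-\mathrm{Id}$ follows by writing $x=\tilde F^{-1}(\tilde F x)$ and using the contraction together with the boundedness of $\tilde f-\tilde F$ — and that $\mathcal T$ is a $\lambda$-contraction. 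Its unique fixed point $H$ satisfies $H\circ\tilde f=\tilde F\circ H$, is $\ZZ^2$-equivariant, and thus descends to $h:\Tor\to\Tor$ with $h\circ f=F\circ h$; since $H$ is equivariantly homotopic to $\mathrm{Id}_{\RR}$ (straight-line homotopy), $h$ is homotopic to $\mathrm{Id}_\Tor$, so $\deg h=1$ and $h$ is surjective. This proves the corollary.

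Given Theorem A, essentially everything here is routine; the two places that deserve care are the arithmetic of the first step — translating "degree $>16$" into the hypothesis $\tau_2(E)>4$ of Theorem A via the Smith-normal-form divisibility $\tau_1\mid\tau_2$ — and the direction of the semiconjugacy, which comes out right precisely because $F$, being uniformly expanding, has a uniformly contracting inverse lift, making the graph transform a genuine contraction (had $f$, which is only non-uniformly hyperbolic, been on the contracting side, the argument would collapse). As an alternative one could invoke Shub's structural stability theorem that $F$ is topologically conjugate to its linear part $E$, run the fixed point argument with $\tilde E$ in place of $\tilde F$, and then compose with the conjugacy; the direct route above avoids this and is self-contained.
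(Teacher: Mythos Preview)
Your proof is correct. The arithmetic step translating ``degree $>16$'' into $\tau_2(E)>4$ via $\tau_1\mid\tau_2$ is exactly what is needed, and the graph-transform construction of the semi-conjugacy is carried out carefully and in the right direction.

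The paper's proof is considerably shorter: it simply cites Shub's theorem that a uniformly expanding map is topologically conjugate to its linear part $E$, and then observes that $E$ is a factor of the map $f$ produced by Theorem~A (the latter being the standard Franks-type semi-conjugacy for maps homotopic to an expanding linear endomorphism). You mention this route yourself at the end as an alternative. What you do instead is collapse the two steps into one, running the contraction argument directly with $\tilde F^{-1}$ rather than with $\tilde E^{-1}$; this is perfectly legitimate, since the only thing Shub's argument really needs is that the target map be uniformly expanding, not that it be linear. Your version is self-contained and makes explicit why the direction of the semi-conjugacy comes out right, at the cost of some length; the paper's version is a two-line appeal to known results.
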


\begin{proof}
By \cite{shubendo} $F$ is conjugate to its linear part $E$, and $E$ is a factor of the map $f$ constructed in \hyperlink{theoremA}{Theorem A}.
\end{proof}

For any such expanding map, necessarily both its exponents are positive (in fact, with respect to any of its invariant measures). Nevertheless, it can be obtained as a quotient of a system which has contracting direction at Lebesgue almost every of its phase space.

\subsection{Continuity of the exponents}

A natural question which arises is how do the Lyapunov exponents depend on the map $f$. Since we do not know a priori that $f$ is ergodic, it makes sense to ask what is the regularity of the maps $f\mapsto\int_{\Tor}\chi^-(f)d\mu$, $f\mapsto\int_{\Tor}\chi^+(f)d\mu$, for $f\in \mathrm{End}^r_{\mu}(\TT)$, $r\geq 1$.

The classical results of Ma\~ ne-Bochi-Viana show that one cannot expect continuity in the $\mathcal C^1$ topology for diffeomorphisms without dominated (Oseledets) splitting. Continuity in finer topologies is known for certain restricted classes of diffeomorphisms, but the general panorama is still not clear. The reader is referred to \cite{Liang2018} and references therein for a discussion in the diffeomorphism (partially hyperbolic) setting. For continuity of Lyapunov exponents for cocycles over homeomorphisms the reader can check the survey \cite{VIANA2018}.

On the other hand, the results in \cite{expcontendoVianaYang} for cocycles over endomorphisms suggest that we may be able to obtain better regularity than in its invertible counterpart. Indeed, we obtain the following result.

Define
\[
C_{\det}(f)=\sup_{n\in\mathbb N}\frac 1n\inf_{x\in M}\log|\det(Df_x^n)|>0
\]
and
\begin{equation}\label{eq:U1}
\mathcal U_1=\left\{ f\in \mathrm{End}^1_{\mu}(\TT): C_{\chi}(f)>-\frac 12C_{\det}(f)\right\},
\end{equation}
where $C_{\chi}(f)$ is defined in \eqref{eq:Cf}. Clearly $\mathcal U_1$ is open and contains the set $\mathcal U$ of NUH endomorphisms constructed in \hyperlink{theoremA}{Theorem A}.

\begin{theoremB}\hypertarget{theoremB}{}
The maps $\mathcal U_1\ni f\mapsto\int_{\Tor}\chi^-(f)d\mu$, $\mathcal U_1\ni f\mapsto\int_{\Tor}\chi^+(f)d\mu$ are continuous in the $\mathcal C^1$ topology.
 \end{theoremB}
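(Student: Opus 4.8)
The plan is to establish continuity through a formula expressing the integrated exponents in terms of finite-time quantities that are manifestly $\mathcal C^1$-continuous, combined with uniform control coming from membership in $\mathcal U_1$. The starting point is that for $f \in \mathrm{End}^1_\mu(\TT)$ the quantity $I(x,v;f^n)$ from \eqref{eq:I} is, for each fixed $n$, a continuous function of $f$ in the $\mathcal C^1$ topology (the preimage set $f^{-n}(x)$ varies continuously since $f$ is a local diffeomorphism of fixed degree, and the Jacobians and inverse-derivative norms are continuous). The key identity to exploit is the ``integration by substitution'' for endomorphisms: since $f$ preserves $\mu$, one has $\int_{\TT}\big(\sum_{y\in f^{-n}(x)}\tfrac{g(y)}{\det(D_yf^n)}\big)d\mu(x)=\int_{\TT}g\,d\mu$ for any observable $g$. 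Applying this to $g(y)=\log\|(D_yf^n)^{-1}v\|$ along an Oseledets direction, together with the subadditive ergodic theorem, should give
\[
\int_{\TT}\chi^-(f)\,d\mu = \lim_{n\to\infty}\frac 1n\int_{\TT}\inf_{v\in T^1_x\TT} I(x,v;f^n)\,d\mu(x),
\]
or at least a two-sided estimate sandwiching $\int\chi^-$ between $\liminf$ and $\limsup$ of such averages; and then $\int\chi^+(f)\,d\mu = C_{\det}(f)$-type integral minus $\int\chi^-(f)\,d\mu$ via $\int(\chi^++\chi^-)\,d\mu=\int\log|\det D_xf|\,d\mu$, which is patently $\mathcal C^1$-continuous. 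So it suffices to prove continuity of $f\mapsto\int\chi^-(f)\,d\mu$.

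The mechanism for upgrading pointwise-in-$n$ continuity to continuity of the limit is \emph{uniform convergence}, and this is exactly where the hypothesis $C_\chi(f)>-\tfrac12 C_{\det}(f)$ enters. First I would show that $\frac 1n\inf_{(x,v)}I(x,v;f^n)$ is (sub/super)additive in $n$ up to controlled error, so that by Fekete the $\sup_n$ defining $C_\chi(f)$ is a genuine limit, and moreover that the convergence $\frac 1n\inf I(\cdot;f^n)\to\int\chi^-$ is uniform on a $\mathcal C^1$-neighborhood of a given $f_0\in\mathcal U_1$. The positivity of $C_\chi$ relative to $-\tfrac12 C_{\det}$ forces the negative exponent to be bounded away from a degenerate regime and, crucially, forces $\chi^+>0$ with a definite gap, which is what prevents the Oseledets splitting from collapsing and gives the needed equicontinuity of the tail of the subadditive sequence. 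Concretely, I expect an estimate of the form: for $g$ in a small $\mathcal C^1$-ball around $f_0$,
\[
\Big|\int_{\TT}\chi^-(g)\,d\mu - \frac 1n\int_{\TT}\inf_v I(x,v;g^n)\,d\mu(x)\Big|\le \frac{K}{n},
\]
with $K$ uniform over the ball, the constant $K$ depending only on $C_\chi(f_0)$, $C_{\det}(f_0)$ and $\|Df_0\|$. Granting this, for $\varepsilon>0$ pick $n$ with $K/n<\varepsilon/3$; then $g\mapsto\frac1n\int\inf_v I(x,v;g^n)\,d\mu(x)$ is continuous at $f_0$ (being a finite-$n$ expression), so it differs from its value at $f_0$ by $<\varepsilon/3$ on a smaller ball, and the triangle inequality closes the argument.

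The main obstacle, I expect, is the rigorous passage from the finite-time infima $\inf_v I(x,v;f^n)$ to the integrated exponent $\int\chi^-$, and proving the \emph{uniformity} of the $O(1/n)$ error over a $\mathcal C^1$-neighborhood. The delicate point is that $\inf_v I(x,v;f^n)$ involves an infimum over directions taken \emph{before} pulling back, whereas the Oseledets exponent is read along the Oseledets direction at the endpoint; reconciling these requires a cocycle/adjoint argument showing that the worst direction asymptotically aligns with the slow Oseledets subspace, with quantitative angle estimates that survive perturbation. I anticipate using a cone-field or projective-contraction argument in the spirit of \cite{expcontendoVianaYang}, where the hypothesis defining $\mathcal U_1$ is precisely the quantitative condition guaranteeing an invariant cone and uniform contraction rates on a whole neighborhood; the inequality $C_\chi(f)>-\tfrac12 C_{\det}(f)$ is the budget that makes this work. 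Once the uniform estimate is in hand, continuity of both $f\mapsto\int\chi^-(f)\,d\mu$ and $f\mapsto\int\chi^+(f)\,d\mu$ follows immediately as sketched above.
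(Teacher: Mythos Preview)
Your proposal has a genuine gap at its core. The entire argument rests on the uniform estimate
\[
\Big|\int_{\TT}\chi^-(g)\,d\mu - \frac 1n\int_{\TT}\inf_v I(x,v;g^n)\,d\mu(x)\Big|\le \frac{K}{n}
\]
with $K$ uniform on a $\mathcal C^1$-ball, but you offer no mechanism to prove it. The cone-field/projective-contraction route you invoke cannot work here: the maps in $\mathcal U_1$ are precisely those \emph{without} any dominated splitting (that is the point of the paper), so there is no invariant cone field, and the condition $C_\chi(f)>-\tfrac12 C_{\det}(f)$ does not produce one. Moreover, the identity you want linking $\int\chi^-$ to $\lim_n \frac{1}{n}\int\inf_v I(\cdot;f^n)\,d\mu$ is itself not established: your integration-by-substitution gives $\int I(x,v;f^n)\,d\mu(x)=\int\log\|(D_xf^n)^{-1}v\|\,d\mu(x)$ only for a \emph{fixed} direction $v$, whereas the Oseledets direction varies with $x$; and the Remark after Proposition~\ref{pro:existeexpneg} already flags that the ``right'' constant $C_\chi'(f)$ may fail to depend semicontinuously on $f$, which is exactly the obstruction to your uniform-convergence scheme.

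The paper's proof proceeds by an entirely different mechanism. One writes $\int\chi^-(f)\,d\mu=\int_{\Pp\hTTf}\log\|D\hat f(\hat x)v\|\,d\hat\mu^{p-}$, where $\hat\mu^{p-}$ is the lift of $\hat\mu$ to the projective bundle with fiber conditionals $\delta_{E^-(\hat x)}$, and then argues by contradiction. If $f_n\to f$ but $\hat\mu_n^{p-}$ accumulates on some $\hat\mu^p\neq\hat\mu^{p-}$, the key observation is that each $\hat\mu_n^{p-}$ has \emph{product disintegrations} along the fibers of $\pi_{\mathrm{ext}}$ (because $E^-$ depends only on the forward orbit, hence is constant on fibers), and this product structure passes to the weak* limit under the hypotheses verified here. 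On the other hand $\hat\mu^p$ must be of the form $\int(\rho\,\delta_{E^+}+(1-\rho)\,\delta_{E^-})\,d\hat\mu$; combining, one finds a positive-measure set of $x$ for which $E^+(\hat x)$ is $\hat\mu_x$-a.e.\ constant on $\pi_{\mathrm{ext}}^{-1}(x)$, say equal to $v^+(x)$. Evaluating $\lim_n\frac{1}{n}I(x,v^+;f^n)$ then gives $\chi^+(x)\le -C_\chi(f)$, which together with $\chi^-(x)\le -C_\chi(f)$ forces $C_{\det}(f)\le -2C_\chi(f)$, contradicting $f\in\mathcal U_1$. So the role of the defining inequality of $\mathcal U_1$ is not to buy uniform subadditive convergence, but to rule out the degenerate scenario in which $E^+$ is independent of the past.
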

 
 An immediate consequence of Pesin formula for endomorphisms \cite{pesinendo} is the result below.
 
 \begin{corollary}
The map $\mathcal U\ni f\mapsto h_{\mu}(f)$ is continuous in the $\mathcal C^1$ topology. Here $h_{\mu}(f)$ denotes the metric entropy of $f$ with respect to $\mu$.
  \end{corollary}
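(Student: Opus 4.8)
The plan is to reduce the statement to \hyperlink{theoremB}{Theorem B} through Pesin's entropy formula for endomorphisms. By \hyperlink{theoremA}{Theorem A}, every $f\in\mathcal U$ is non-uniformly hyperbolic, so $\chi^-(f)<0<\chi^+(f)$ $\mu$-almost everywhere; in particular $\chi^+(f)$ is the unique positive Lyapunov exponent of $f$ (with multiplicity one) at $\mu$-a.e.\ point. Since $f$ is a local diffeomorphism of $\TT$ preserving the smooth measure $\mu$, Pesin's formula for endomorphisms \cite{pesinendo} applies: the relevant absolute-continuity hypothesis holds because $\mu$, being equivalent to Lebesgue, lifts to the natural extension as a measure whose conditionals along the unstable manifolds of $\hat f$ are absolutely continuous — here one uses that an unstable manifold in the inverse limit is in bijection with the corresponding unstable manifold of $f$ downstairs, so its conditional is inherited from the disintegration of $\mu$ along the one-dimensional unstable lamination of $f$ (which is absolutely continuous since $\mu$ is smooth). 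Hence
\[
h_\mu(f)=\int_{\TT}\chi^+(f)\,d\mu .
\]

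Granting this identity, the corollary is immediate: $\mathcal U\subset\mathcal U_1$, and by \hyperlink{theoremB}{Theorem B} the map $\mathcal U_1\ni f\mapsto\int_{\TT}\chi^+(f)\,d\mu$ is continuous in the $\mathcal C^1$ topology. Restricting to $\mathcal U$ and composing with the displayed equality yields the continuity of $f\mapsto h_\mu(f)$ on $\mathcal U$.

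The only delicate point — and the one carried by the cited reference — is the verification of the hypotheses of Pesin's formula in the non-invertible setting, namely the absolute continuity of the unstable conditionals of the lifted measure on the inverse limit; the inequality $h_\mu(f)\le\int_{\TT}\chi^+(f)\,d\mu$ (Ruelle's inequality for $\mathcal C^1$ endomorphisms) is elementary, but the reverse inequality is exactly where the regularity matters. For this reason the clean identity above is available at the $\mathcal C^{1+\alpha}$ (or $\mathcal C^2$) level; for maps in $\mathcal U$ that are merely $\mathcal C^1$ one either reads the statement at that regularity, or uses $\mathcal C^1$-density of smooth conservative local diffeomorphisms together with the continuity of $f\mapsto\int_{\TT}\chi^+(f)\,d\mu$ from \hyperlink{theoremB}{Theorem B} to propagate the identity, the point being that whichever form of Pesin's formula \cite{pesinendo} provides in the conservative endomorphism setting is precisely what is needed to close the argument.
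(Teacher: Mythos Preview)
Your proposal is correct and follows exactly the route the paper intends: the corollary is stated as ``an immediate consequence of Pesin formula for endomorphisms \cite{pesinendo}'', i.e., $h_\mu(f)=\int_{\TT}\chi^+(f)\,d\mu$ combined with the continuity of $f\mapsto\int_{\TT}\chi^+(f)\,d\mu$ from \hyperlink{theoremB}{Theorem B} on $\mathcal U_1\supset\mathcal U$. Your discussion of the regularity needed for the equality in Pesin's formula is a legitimate caveat that the paper does not spell out; the absolute continuity of the unstable conditionals of $\hmu$ that you invoke is precisely what the paper later records as Proposition~\ref{pro:abscontWu} (from \cite{Liu2011}), so the justification is available within the paper's framework at the $\mathcal C^{1+\epsilon}$ level.
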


\subsection{Stable ergodicity} 
\label{sub:stable_ergodicity}

Some of the examples constructed can be shown to be stably ergodic.

\begin{definition}
We say that an endomorphism $f$ is stably ergodic (for $\mu$) if there exists a $\mathcal C^1$ neighborhood $\mathcal U'$ of $f$ in $\mathrm{End}^1_{\mu}(\TT)$ so that every $g\in \mathcal{U'}$ of class $\mathcal{C}^{2}$ is also ergodic.
\end{definition}

The study of stably ergodic diffeomorphisms (that are not uniformly hyperbolic) was initiated by the results of Grayson, Pugh and Shub \cite{StErgGeo}, and was promoted by Pugh and Shub to a theory which in the present attracts a large amount of research. Citing all these bibliography would take us astray, so we will content indicating some significative recent articles and refer the reader to them for additional information: \cite{ErgPH,AccStaErg,Avila2021}.

Despite its popularity in the context of invertible dynamics, stable ergodicity for endomorphisms has never been given serious attention. In fact, we are unaware of any earlier attempt to give examples outside the realm of uniformly expanding systems. On the other hand, there are quite a few studies of physical measures for endomorphisms, and some of these can be translated into results about stable ergodicity. Most notably, for maps that are non-uniformly expanding in the sense of \cite{SRBMostexp}, the basins of physical measures are open. In a conservative context, that translates to saying that ergodic components of Lebesgue measure are open (modulo sets of zero Lebesgue measure). Therefore, robust almost everywhere non-uniform expansion together with robust transitivity implies stable ergodicity. 

For partially hyperbolic endomorphisms we have a very complete picture in the dissipative setting thanks to the seminal work of Tsujii \cite{Tsujii2005}: $\mathcal C^2$-generic partially hyperbolic local diffeomorphisms on $\TT$ have a finite number of physical measures whose basins cover a set of full Lebesgue measure of the manifold. One of the curious facts is that these SRB measures are absolutely continuous with respect to Lebesgue, even if the central Lyapunov exponent is slightly negative. In the recent \cite{Tsujii2022} Tsujii and Zhang construct smooth families of endomorphisms that are non-uniformly hyperbolic with respect to the SRB.  These works seem to require regularity higher than $\mathcal C^2$. In the conservative setting it is likely that one can generalize the concept of accessibility to partially hyperbolic endomorphisms (by looking at the inverse limit and consider $\pext(x)$ as the strong stable manifold of $x$) and then adapt the Grayson-Pugh-Shub programme to obtain stable ergodicity.

For partially hyperbolic endomorphisms having only admissible measures (the analogue of Gibbs $u$-states for non-invertible maps) with negative central Lyapunov exponents, the first author proved in \cite{Andersson2010} that having a single physical measure is a $\mathcal C^2$ open property. For conservative maps that translates into saying that ergodicity implies ($\mathcal C^2$) stable ergodicity. We find it very likely that such maps are indeed $\mathcal C^1$ stably ergodic, meaning that every $\mathcal C^2$ map $\mathcal C^1$ close is also ergodic. This was proved by Yang \cite{Yang2021} in the analogous setting for diffeomorphisms, and we see no reason why his proof should not work for endomorphisms as well. 

In this work we produce examples of stably ergodic endomorphisms of a rather exotic kind: robustly non-uniformly ergodic maps without any dominated splitting. This stands in sharp contrast the invertible case. Indeed, it was shown by Bochi, Fayad and Pujals \cite{BoFaPu2006} that any stably ergodic diffeomorphism can be approximated by one having a dominated splitting.

Our contribution to the theory is the following.

\begin{theoremC}\hypertarget{theoremC}{}
For any linear endomorphism $E$ as in \hyperlink{theoremA}{Theorem A}, if $\pm 1$ is not an eigenvalue of $E$ then 
$[E]\cap \mathcal U$ contains stably ergodic endomorphisms.

In fact, $[E]\cap \mathcal U$ consists of stably Bernoulli\footnote{An endomorphism is Bernoulli if its inverse limit lift is measure theoretically isomorphic to a Bernoulli process.} endomorphisms (and in particular, maps that are mixing of all orders).
\end{theoremC}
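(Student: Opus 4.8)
The plan is to upgrade the non-uniform hyperbolicity and robustness already supplied by Theorem A to genuine (stable) ergodicity, and then to Bernoullicity, by running a Pesin-theoretic argument adapted to the non-invertible setting. First I would fix $E$ as in Theorem A with $\pm 1$ not an eigenvalue, so that $E$ is a hyperbolic toral endomorphism with no eigenvalue on the unit circle; the key structural consequence is that the inverse limit $\widehat{\TT}$ of $E$ carries a natural hyperbolic structure (an honest Anosov homeomorphism), and any $f\in[E]$ sufficiently $\mathcal C^1$-close has an inverse limit $\hf$ that is topologically conjugate to $\widehat E$ by a result of Shub-type (as already used in the Corollary via \cite{shubendo}); in particular $\hf$ is topologically transitive, indeed topologically mixing since no eigenvalue is a root of unity. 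I would then choose $f\in[E]\cap\mathcal U$ from Theorem A, so $f$ is NUH with one positive and one negative exponent $\mu$-a.e., and $\mathcal C^2$.

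The core of the argument is a Hopf-type / Pesin absolute-continuity argument for endomorphisms. Because $f$ is NUH, Pesin theory for endomorphisms (the inverse-limit version; see e.g. the framework behind \cite{pesinendo}) produces, on a set of full $\mu$-measure, local stable manifolds $\Wsloc{x}$ inside $\TT$ and local unstable manifolds $\hWuloc{\hx}$ living in the inverse limit, together with absolute continuity of the corresponding holonomies with respect to Lebesgue. The second step is therefore to establish that the (essential) ergodic components of $\mu$ are both $s$-saturated and $u$-saturated, and then to use a Hopf chain / accessibility-type argument to conclude that there is a single component. The transitivity (in fact topological mixing) of $\hf$ is what glues the pieces together: a dense orbit in the inverse limit lets one connect any two positive-measure sets through a chain of stable and unstable Pesin plaques, exactly as in the Grayson-Pugh-Shub / Burns-Wilkinson scheme, but with the unstable direction replaced by its inverse-limit avatar $\pext$. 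The hypothesis that $E$ has no eigenvalue of modulus one guarantees that the whole tangent bundle is "used" hyperbolically and that the inverse limit is uniformly hyperbolic, which is what makes the Hopf argument close up.

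Stability then follows because every ingredient is robust: $C_\chi(f)>0$ is a $\mathcal C^1$-open condition (the set $\mathcal U$ is open), so NUH with uniformly bounded-away-from-zero integrated exponents persists; the topological conjugacy of inverse limits, hence topological mixing of $\hg$, persists for $g$ $\mathcal C^1$-close; and the Pesin stable/unstable laminations with absolute continuity persist for $\mathcal C^2$ maps with uniform hyperbolicity estimates. Hence every $\mathcal C^2$ map $g$ that is $\mathcal C^1$-close to $f$ is again ergodic, which is exactly stable ergodicity. For the Bernoulli conclusion I would invoke the standard strengthening: an ergodic NUH conservative system which is Kolmogorov is Bernoulli (Ornstein-Weiss / Pesin), so it suffices to promote ergodicity to the K-property, which again comes from topological mixing of the inverse limit together with the Pesin partition machinery; this too is $\mathcal C^1$-robust among $\mathcal C^2$ maps.

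The main obstacle, I expect, is the absolute continuity of the unstable holonomy in the non-invertible setting: for an endomorphism the "unstable manifolds" are not subsets of $\TT$ but live in the inverse limit, and one must control how they project under $\pcov$ and how Lebesgue measure disintegrates along them, checking that the Pesin absolute-continuity theorem genuinely transfers through the inverse-limit construction. A secondary delicate point is verifying that the local unstable plaques one produces for nearby $g$ have uniformly bounded geometry (size, curvature, angle with the stable direction) purely from the $\mathcal C^1$-openness of $C_\chi$ together with $\mathcal C^2$ bounds — i.e. that the Pesin block estimates are uniform over the neighborhood, which is what makes the ergodicity argument survive perturbation.
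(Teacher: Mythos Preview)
Your proposal has two genuine gaps. First, the elements of $[E]\cap\mathcal U$ produced in Theorem~A are the maps $f_t=E\circ h_t$ with $t$ large; they are $\mathcal C^1$-\emph{far} from $E$ and have no dominated splitting. Hence the structural-stability step you invoke (``$\hf$ topologically conjugate to $\widehat E$, so the inverse limit is uniformly hyperbolic/Anosov'') is simply unavailable: $\hf$ is only non-uniformly hyperbolic for the derivative cocycle. Transitivity must come from a different source; in the paper it is obtained from \cite{Andersson2016}, which shows that \emph{every} area-preserving endomorphism homotopic to $E$ is transitive as soon as $\pm1\notin\mathrm{Spec}(E)$, with no proximity to $E$ required.

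Second, and more seriously, your Hopf argument is not sufficient in the endomorphism setting. For a genuinely non-invertible map the unstable manifolds live in the inverse limit and their projections to $\TT$ do \emph{not} form a lamination, so one cannot directly conclude that ergodic components are open modulo null sets from NUH plus transitivity; indeed the paper lists ``does transitive NUH imply ergodicity for surface endomorphisms?'' as an open question. The paper's substitute is Theorem~D: if in addition almost every stable manifold has diameter bounded below by some fixed $\lambda>0$, then $f$ is ergodic (Bernoulli). The point is that a large stable manifold through a point in the interior of a small $su$-rectangle must cross one of the unstable sides, and this is what forces the interior to lie in a single ergodic basin (Lemma~\ref{le:interior}). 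Establishing this ``large stable manifolds'' property is a separate, concrete geometric argument (Proposition~\ref{cor:largestable}): one shows that pre-images of suitable vertical segments again contain vertical segments, using the specific shear structure of $f_t$ and the distribution of pre-images in the good region. Your outline contains no mechanism to produce this lower bound on the size of Pesin stable manifolds, and without it the chain-of-plaques argument does not close.
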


Thus, we obtain examples of stably ergodic non-uniformly hyperbolic endomorphisms that do not admit any invariant distribution, and therefore are not partially hyperbolic. 

\smallskip 

\hyperlink{theoremC}{Theorem C} above will be obtained as consequence of the following general result, that is of independent interest. For the discussion of global stable manifolds and Pesin theory see Section \ref{sec:Pesintheory}.

\begin{theoremD}\hypertarget{theoremD}{}
Let $f$ be a $\mathcal C^2$ transitive area preserving non-uniformly hyperbolic endomorphism on a surface. Suppose there exists $\lambda>0$ such that  for almost every point,  the diameter\footnote{The diameter is measured inside the ambient space.} of the global stable manifold of $x$ is larger than $\lambda$. Then $f$ is ergodic.

If furthermore $f^n$ is transitive for any positive natural number $n$, then $f$ is Bernoulli.
\end{theoremD}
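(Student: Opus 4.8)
The plan is to adapt the classical Hopf-type argument for non-uniformly hyperbolic systems (à la Pesin, and in its cleanest form à la Burns--Wilkinson / Pesin--Sinai) to the non-invertible setting via the inverse limit. First I would pass to the natural extension $\hat f$ acting on the inverse limit $\hat\Toro$, with its unique lift $\hat\mu$ of $\mu$; since $f$ is non-uniformly hyperbolic, $\hat f$ is a non-uniformly hyperbolic \emph{homeomorphism} of the compact metric space $\hat\Toro$, and ergodicity of $\hat\mu$ for $\hat f$ is equivalent to ergodicity of $\mu$ for $f$. On $\hat\Toro$ one has genuine unstable sets $\hat W^u(\hat x)$ (the projections of these to the base give the ``prehistories'' direction) together with the stable sets $\hat W^s(\hat x)$, which project injectively to the stable manifolds $W^s(x)$ in the surface. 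The hypothesis that $W^s(x)$ has diameter $\geq\lambda$ for a.e. $x$ lifts to the statement that $\hat W^s(\hat x)$ has diameter $\geq\lambda$ for $\hat\mu$-a.e. $\hat x$; this is the replacement for a ``large local stable manifold'' lower bound that the usual arguments extract only on a positive-measure Pesin block.

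The core of the argument is then a Hopf chain / absolute continuity scheme. For a bounded continuous $\varphi:\hat\Toro\to\Real$, let $\varphi^+,\varphi^-$ be its forward and backward Birkhoff averages, defined and equal $\hat\mu$-a.e.\ on a full-measure set $G$. Standard (non-invertible) absolute continuity of the stable holonomy — here I would invoke Pesin theory for endomorphisms, which is available since $f$ is $\mathcal C^2$ (see Section~\ref{sec:Pesintheory}) — gives that $\varphi^+$ is a.e.\ constant along stable manifolds, and the unstable counterpart on $\hat\Toro$ gives that $\varphi^-$ is a.e.\ constant along unstable sets. Using transitivity of $f$ (equivalently topological transitivity of $\hat f$, or at least of enough of its dynamics) together with the \emph{uniform} lower bound $\lambda$ on stable diameters, I would build, for $\hat\mu\times\hat\mu$-a.e.\ pair $(\hat x,\hat y)$, a finite chain alternating stable and unstable pieces connecting $\hat x$ to $\hat y$ and staying inside $G$: the uniform size $\lambda$ is exactly what makes the ``stable step'' of the chain reach a definite distance regardless of the point, so that a bounded number of alternating steps — controlled via transitivity and a density/Lebesgue-point argument for $\hat\mu$ — suffices. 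Along such a chain $\varphi^+$ is constant, hence $\int\varphi\,d\hat\mu=\varphi^+$ is a.e.\ constant, so $\hat\mu$ is ergodic, hence $\mu$ is ergodic.

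For the Bernoulli conclusion I would upgrade ergodicity to the Bernoulli property by the standard Pesin-theoretic route: an ergodic non-uniformly hyperbolic $\mathcal C^2$ system with respect to a smooth (hence SRB, and with a.c.\ conditionals on stable and unstable laminations) measure has the Kolmogorov property by the Pinsker-algebra/hyperbolic-product argument, and Kolmogorov plus the hyperbolic structure gives Bernoulli (Ornstein--Weiss theory, as carried out for Pesin systems by Pesin and in the non-invertible case one works on the inverse limit $\hat f$, which is where the Bernoulli property is asserted in the statement anyway). Concretely: ergodicity plus the alternating-chain argument applied to iterates $f^N$ shows there is no nontrivial finite factor obstruction, giving K; then K + local product structure of $\hat\mu$ along $\hat W^s,\hat W^u$ yields Bernoulli. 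The main obstacle I anticipate is the chain-construction step: in the non-invertible setting the unstable ``manifolds'' live only in the inverse limit and need not vary continuously or have uniform size, so making the alternating Hopf chain close up with a \emph{uniformly bounded} number of steps requires care — one must leverage transitivity of $\hat f$ on the base quotiented appropriately, control the defect in absolute continuity of the non-invertible stable holonomy, and ensure the chain stays in the full-measure good set $G$; the uniform stable-diameter hypothesis $\lambda$ is the key input that keeps this from degenerating, but marrying it cleanly with transitivity (which only gives dense orbits, not quantitative recurrence) is the delicate point.
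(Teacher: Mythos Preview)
Your overall strategy (Hopf argument on the natural extension, absolute continuity, then upgrade to Bernoulli) is the right genre, and you correctly identify the delicate point. But the chain-construction step, as you describe it, has a genuine gap that the paper handles by a different mechanism.

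The hypothesis is that $W^s(x)$ has \emph{diameter} at least $\lambda$, not that it contains a local stable disk of size $\lambda$ in any controlled direction. As the authors emphasize in the introduction, this ``is different from the classical notion of `large Pesin manifolds', because we do not have control on how the manifolds turn inside the torus.'' So the sentence ``the uniform size $\lambda$ is exactly what makes the `stable step' of the chain reach a definite distance'' does not by itself produce the transversal $W^s$/$W^u$ intersections a Hopf chain needs: a stable manifold of diameter $\lambda$ can be a long, winding curve that fails to cross a prescribed unstable piece. Your proposal never explains how large diameter is converted into actual stable/unstable crossings inside the good set $G$, and transitivity alone (dense orbits, no quantitative recurrence) does not fill this hole.

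The paper closes this gap by a topological trapping argument rather than a direct chain. It first shows (Lemma~\ref{le:rectangles}) that Lebesgue-a.e.\ point lies in the interior of an arbitrarily small \emph{regular $su$-rectangle}: a Jordan curve made of two stable and two unstable arcs, with the unstable arcs lying in $\hat\reg_1$. The key step (Lemma~\ref{le:interior}) then uses the diameter hypothesis as follows: if some other ergodic component $\mu_1$ met the interior of a $\mu_0$-regular rectangle of diameter $<\lambda$ in positive measure, take a density/recurrent point $x$ for $\mu_1$ inside; since $\mathrm{diam}\,W^s(x)>\lambda$, the curve $W^s(x)$ must \emph{exit} the rectangle and hence topologically cross one of its unstable sides $W^u_1$. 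Iterating forward brings this crossing near $x$, and then the argument of Rodriguez Hertz--Rodriguez Hertz--Tahzibi--Ures (Sard's theorem applied to the projection of $f^n(W^u_1)$ along a $\mathcal C^1$ foliation box extending the stable lamination) upgrades one topological crossing to a positive-Lebesgue set of transversal intersections with local stable leaves of the $\mu_1$-block, contradicting that Lebesgue-a.e.\ point of $W^u_1$ is in the basin of $\mu_0$. Transitivity is used only at the very end, to say that any two such small rectangles have iterates whose interiors overlap. This is the idea you are missing: the large-diameter hypothesis is exploited \emph{topologically} (forcing an exit from a small $su$-rectangle), not metrically as a ``long step'' in a Hopf chain.

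For the Bernoulli upgrade your outline (K via Pinsker, then Ornstein--Weiss) is plausible but the paper takes a shorter route: it realizes $\hat f$ as the restriction to an attractor of a $\mathcal C^2$ skew-product diffeomorphism $F$ on a smooth manifold, observes that the lifted measure is hyperbolic and satisfies Pesin's formula (hence is SRB by Ledrappier--Young), notes that all iterates of $F$ are ergodic because Theorem~D applies to every $f^n$, and then invokes Ledrappier's theorem that an ergodic hyperbolic SRB measure with all powers ergodic is Bernoulli.
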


\subsection{Comments on the proofs}

We will make some comments on the ideas of the proofs involved in the article.

A fundamental tool is the natural extension of the endomorphism $f$: it is the space $\hTTf$ of the backward orbits, with a natural projection $\pi_{ext}$ to the torus. The endomorphism $f$ lifts to an invertible map $\hat f$ and the area $\mu$ lifts to an invariant measure $\hat\mu$. The Lyapunov exponents of $f$ coincide with the Lyapunov exponents of the $Df$ cocycle lifted to $\hTTf$. An important observation is the fact that $\lim_{n\rightarrow\infty}\frac{1}{n}I(x,v;f^n)$ is exactly the integral of the Lyapunov exponents of the lifted cocycle at $(\hat x,v)$ on the fiber $\pi_{ext}^{-1}(x)$ with respect to the disintegration of $\hat\mu$ on $\pi_{ext}^{-1}(x)$. This will eventually imply that any map in $\mathcal U$ must be non-uniformly hyperbolic. 

Let us mention that the property defining $\mathcal U$ has a similar counterpart in the setting of random maps, namely the ``uniform expansion property'' of Dolgopyat-Krikorian \cite{DK07}. This property has several nice consequences for the properties of stationary measures, and was investigated in a series of papers like \cite{DK07}, \cite{Brown2017}, \cite{Ch20}, \cite{Liu16}.

The basic ideas in the construction of the specific examples in the homotopy class of $E$ are the following. We modify $E$ such that we make the map (strongly) hyperbolic in some ``good''  region $\mathcal G$, which is chosen to be large enough so that most of the backward orbits spend a substantial time inside it. In general this may not be enough in order to obtain hyperbolicity without some further control, because even if one obtains a relatively good expansion on some large piece of orbit, it is possible that once we loose the control, the next entry in the good region is such that the expanding direction goes to the contracting direction, and thus all the hyperbolicity which we gained gets lost. In order to avoid this we have to make some kind of non-degeneracy assumption, in particular we construct the map with slightly different hyperbolicity in different parts of the good region $\mathcal G$. Thus we may loose the hyperbolicity at some particular orbits, but we will maintain it at other ones, and this turns out to be enough in order to obtain the conclusion.

The proof of the continuity of the exponents is based on methods developed for the study of the Lyapunov exponents for cocycles with holonomies. The definition of $\mathcal U_1$ guarantees that the Lyapunov spectrum is simple at almost every point. One considers the projectivization of the cocycle over $\hat f$, and the invariant measure $\hat\mu$ lifts to convex combinations of Dirac measures supported at the Oseledets subspaces at almost every point. Assume that a sequence $f_n$ converges to $f$ in the $\mathcal C^1$ topology. The only way the continuity of the exponents can fail is if a sequence of stable lifts of $\hat\mu$ for $f_n$ (lifts supported at the subspaces corresponding to the smallest Lyapunov exponent) does not converge to the stable lift of $\hat\mu$ for $f$, in other words the limit contains some parts of the unstable lift of $\hat\mu$ for $f$. The stable lifts are invariant under the holonomy given by the projection $\pi_{ext}$, and this property can be passed to the limit. This would imply that there is a nontrivial part of the unstable lift of $\hat\mu$ for $f$ which is invariant under the $\pi_{ext}$ holonomy. It turns out that this is impossible because of the definition of $\mathcal U_1$, this definition already implies that the (strong) unstable Oseledets subspaces cannot be independent of the past, so they cannot be constant on $\pext$ fibers. This condition is reminiscent of the ``non-random stable bundle'' case appearing in the classification theorem of A. Brown and F. Rodriguez-Hertz \cite{Brown2017} of hyperbolic stationary measures for random diffeomorphisms of surfaces. 

For the proof of the stable ergodicity we rely on the classical Hopf argument. In order to obtain the intersections between the stable and unstable manifolds we introduce a new method: we first prove that the stable manifolds at almost every points have large diameter. This is different from the classical notion of ``large Pesin manifolds'', because we do not have control on how the manifolds turn inside the torus. However we can use ideas from Rodriguez Hertz-Rodriguez Hertz-Tahzibi-Ures \cite{Hertz2011} to show that in the case of endomorphisms on surfaces, large stable manifolds in our sense do imply that the ergodic components are open modulo zero sets. In order to conclude the proof we use a result of Andersson \cite{Andersson2016} on the transitivity of area preserving endomorphisms on $\TT$.

\subsection{Further questions}\label{subs:furtherQ}

Here we make some comments and we ask some further questions related to this work.
\begin{enumerate}
\item {\bf Non-uniformly hyperbolic endomorphisms in {\it all} homotopy classes.}

We proved that the $\mathcal{C}^1$ open set $\mathcal U$ of non-uniformly hyperbolic endomorphisms intersects most of the homotopy classes of endomorphisms.
\begin{question}
Is it true that $\mathcal U$ intersects all the homotopy classes of endomorphisms on $\TT$?
\end{question}
It is reasonable to expect that this is true, however if the endomorphism $E$ is a homothety then one has to construct a more complicated good hyperbolic region $\mathcal G$. Also if the degree of $E$ is very small then one should take into consideration the position of several preimages of every point, in order to obtain that most of them fall into the good region.

\item {\bf Stable ergodicity in a more general setting}.

We can obtain the stable ergodicity of the non-uniformly hyperbolic endomorphisms under various restrictions. One of them is a restriction of the homotopy class, the eigenvalues have absolute value different from one, in order to apply Andersson's result on transitivity of endomorphisms.
\begin{question}
Are there stably ergodic non-uniformly hyperbolic endomorphisms in every homotopy class of endomorphisms on $\TT$?
\end{question}
Another restriction comes from the proof of the existence of large stable manifolds, we need that the good region is large enough with good hyperbolicity while the critical region is small. On the other hand, due to the area expansion, the unstable manifolds of a non-uniformly hyperbolic endomorphism are always large.
\begin{question}
Is it true that every transitive non-uniformly hyperbolic endomorphism on a surface is stably ergodic?
\end{question}
Our methods cannot be applied directly for large unstable manifolds, because they do not form a lamination, so one cannot obtain immediately intersections of stable and unstable manifolds. One does obtain however intersections between different unstable manifolds, and this may be enough in order to obtain the ergodicity with some further work.
If the Jacobian of the endomorphism is constant the proof of the ergodicity is straightforward from the absolute continuity of the $\pi_{ext}$ holonomy, so the difficult question is when the Jacobian is not constant.
\item {\bf More continuity of the exponents.}

In general the continuity of the exponents may fail if parts of the stable lift of the measure to the projectivization of the cocycle move in the limit to parts of the unstable lift. But this would imply that the strong unstable subspaces are invariant under the $\pi_{ext}$ holonomy, and this seems to be a rigid situation. 
\begin{question}
Is it true that the average Lyapunov exponents are continuous in (an open dense subset of) the space of $\mathcal C^1$ area preserving endomorphisms of surfaces?
\end{question}
One may also ask how do the Lyapunov exponents depend on the invariant measure, not necessarily Lebesgue.
\begin{question}
Can we say something about the continuity of the Lyapunov exponents of maps in $\mathcal U_1$ with respect to the invariant measure, in a neighborhood of the Lebesgue measure?
\end{question}

\item {\bf Flexibility of Lyapunov exponents for endomorphisms on $\TT$}. 

Bochi, Katok and Rodriguez Hertz proposed in \cite{Bochi2019} a program regarding the flexibility of Lyapunov exponents for diffeomorphisms. The same questions can be addressed in the case of endomorphisms:

\begin{question}[Weak flexibility on $\TT$]
Given any two real numbers $\lambda_1,\lambda_2$ with $\lambda_1+\lambda_2> 0$, does there exist an ergodic measure preserving (smooth) endomorphism of $\TT$ with Lyapunov exponents $\lambda_1$ and $\lambda_2$?
\end{question}

\begin{question}[Strong flexibility on $\TT$]
Given a linear endomorphism $E$ of $\TT$ and $\lambda_1,\lambda_2\in \RR$ with $0<\lambda_1+\lambda_2$ $\leq\log(|\det(E)|)$, does there exist an ergodic measure preserving (smooth) endomorphism of $\TT$, isotopic to $E$, with Lyapunov exponents $\lambda_1$ and $\lambda_2$?
\end{question}

Our results answer the second question for many isotopy classes and for $\lambda_1<<0<<\lambda_2$, $\lambda_1+\lambda_2$ close to $\log(|\det(E)|)$. Any progress in the Questions 1-5 above would also contribute to the flexibility program for endomorphisms on $\TT$. 

\item {\bf Dissipative examples.}

In our paper we consider endomorphisms which preserve the Lebesgue measure on the torus. Similar consideration can also be made for other invariant measures, once we have some control on the disintegrations of the lift to the natural extension along the fibers of $\pi_{ext}$. A natural question is the following.
\begin{question}
Can we deduce the nonuniform hyperbolicity and ergodicity for other meaningful invariant measures of our examples and dissipative perturbations of them? In particular nonuniform hyperbolicity and uniqueness of equilibrium states for some specific potentials?
\end{question}
Of special interest is the existence and uniqueness of SRB measures.
\begin{question}
Can we construct (robust) examples of dissipative endomorphisms on surfaces having (unique) non-uniformly hyperbolic SRB measures and no dominated splitting?
\end{question}
\item {\bf Weak partially hyperbolic diffeomorphisms with two-dimensional center-unstable bundle.}

Our examples can be seen (in the natural extension) as weak partially hyperbolic attractors of diffeomorphisms having a (super-)strong stable bundle and a center-unstable bundle of dimension two. So it is natural to ask how much our results can be transferred to the setting of weak partially hyperbolic diffeomorphisms with a two-dimensional area-expanding center-unstable bundle (or with a two-dimensional area-contracting center-stable bundle, taking the inverse map).

\begin{question}
Can one construct examples of $\mathcal C^2$ stably ergodic non-uniformly hyperbolic diffeomorphisms with a stable bundle and a two-dimensional center-unstable bundle, such that the center-unstable bundle has a positive and a negative exponent and no dominated (sub) splitting? How about dissipative examples with SRB measures?
\end{question}

Somehow similar examples of non-uniformly hyperbolic diffeomorphisms with two-dimensional center and both stable and unstable bundles were constructed in \cite{NUHD}, and ergodicity was proven in \cite{Obata}. 

The stable holonomy for the natural extension of endomorphisms is given by $\pi_{ext}$ which is very nice, and we make strong use of this fact in our proofs. In the case of weak partially hyperbolic diffeomorphisms one would have to assume better regularity of the map and maybe some bunching condition, in order to obtain some good properties of the stable holonomy. 
\item {\bf Higher dimensional examples.}

Another natural question is wether similar examples can be constructed for higher dimensional endomorphisms, with a given number of positive and negative exponents. Furthermore, it is interesting to see if such examples can be adapted to weak partially hyperbolic diffeomorphisms.
\end{enumerate}

\subsection{Organization of the article}

The rest of the article is organized as follows. In the next section we introduce some notation and discuss some prerequisites for the rest of the work; in particular we review the concept of inverse limit and prove that every endomorphism in $\mathcal U$ must be non-uniformly hyperbolic (Proposition \ref{pro:existeexpneg}). The third section is devoted to the proof of the second part of \hyperlink{theoremA}{Theorem A}, we construct the examples of maps in $[E]\cap\mathcal U$ for most homotopy classes $[E]$. At the end of it we include a specific computation of an example to convince the reader in the controllability of all constants. This example is a so called ``expanding'' standard map, which has additional interest and has received some attention lately. The fourth section contains the proof of \hyperlink{theoremB}{Theorem B}, and it includes a discussion about measures with product disintegrations.  In the fifth section we review some facts of Pesin theory for our context, in particular the notion of stable and unstable manifolds for endomorphism. This is used in the last part in order to establish the stable ergodicity and the Bernoulli property of the examples. In the \hyperlink{sec:appendix}{Appendix} we discuss the inverse limit space, and how the area lifts to an invariant measure on it.

\paragraph{Acknowledgements:} We thank the anonymous referee for the useful comments and suggestions.

\subsection{Added in proof}

There has been some advances while this article was under review that are worth mentioning. Concerning Question $1$ of \ref{subs:furtherQ}, it has been proven by V. Janeiro \cite{Janeiro2023} that for degree $\geq 5^2$ the requirement on the linear part not being an homothety is not necessary. On the other hand, S. Ramirez and K. Vivas \cite{Ramirez2023} improved the requirement on the degree to in the non-homothety case, requiring only that the determinant is (in absolute value) larger than $2$. This paper also addresses Question $2$, considering the case when the linear part is not transitive, provided that the degree is large enough. Complementary to this, the work of Y. Lima, D. Obata and M. Poletti \cite{Lima2024} shows that under the transitivity condition, every element in $\mathcal U$ is stably ergodic (Bernoulli). The later paper also deals with Question $8$, establishing uniqueness of the measure of maximal entropy, in some subset of $\mathcal U$.


\section{Preparations and preliminary material} 
\label{sec:preparations_and_preliminary_material}

In this part we collect some facts that will be used in the proofs, and we will show that the endomorphisms in $\mathcal U$ are non-uniformly hyperbolic. 

\subsection{Natural extension}\label{sub:naturalext}

Given a continuous map $f: \TT \to \TT$, we consider its natural extension
\[\hTTf = \{ (x_0, x_1, x_2, \ldots) \in (\TT)^{\Z_+}: f(x_{i+1}) = x_{i} \ \forall i \geq 0 \},\]
endowed with the (weak) product topology inherited from $(\TT)^{\N}$. We denote by $\pext: \hTTf \to \TT$ the projection onto the first coordinate.
The map
\begin{align*}
\hf: \hTTf & \to \hTTf \\
 (x_0, x_1, x_2, \ldots) & \mapsto (f(x_0), x_0, x_1, \ldots).
\end{align*}
is then a homeomorphism on $\hTTf$ satisfying $\pext \circ \hf = f \circ \pext$. In the case where $f$ is a self-cover (as is the case for local diffeomorphisms), the set $\hTTf$ has an interesting structure: it is a solenoidal $2$-manifold, meaning that it is locally homeomorphic to the product of a $2$-dimensional disk with a Cantor set. More specifically, $\hTTf$ is a fiber bundle whose fibers $\pext^{-1}(x)$ are Cantor sets. This structure will be of importance when proving continuity of Lyapunov exponents (section \ref{sec:continuity}) and is explained in some detail in the \hyperlink{sec:appendix}{Appendix}. 

Given any $f$-invariant Borel measure $\mu$ on $\TT$, there exists a unique $\hf$-invariant Borel measure $\hmu$ on $\hTTf$ such that $(\pext)_* \hmu = \mu$ called the $\emph{lift}$ of $\mu$. According to Rokhlin's disintegration theorem, $\hmu$ can be disintegrated along the partition into fibers $\{\pext^{-1}(x): x \in \TT \}$, yielding a family of conditionals. These conditionals are determined by their measures on cylinders in the following way: 
\begin{proposition}\label{pro:desintegracionhmu}
Let $\mu$ be the Haar measure on $\TT$ and $\hmu$ its lift on $\hTTf$. Then
\begin{equation}\label{muhat}
\hmu(A) = \int \hmu_x(A) \ d\mu(x)
\end{equation}
for every Borel set $A \subset \hTTf$, where $\hmu_x$ is the unique measure on $\pext^{-1}(x)$ satisfying
\begin{equation}
\hmu_x(\{(\xi_0, \xi_1, \ldots) \in \pext^{-1}(x): \xi_i = x_i\}) = | \det Df^i(x_i)|^{-1}.
\end{equation}
\end{proposition}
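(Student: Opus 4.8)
The plan is to produce the fibre measures $\hmu_x$ explicitly from the prescribed values on cylinders, to check that integrating them against $\mu$ yields an $\hf$-invariant Borel probability measure which projects to $\mu$, and then to invoke the uniqueness of the lift recalled just before the statement; this identifies that measure with $\hmu$ and simultaneously identifies $\{\hmu_x\}$ with the Rokhlin conditionals. Everything hinges on one elementary identity. Writing $(\mathcal{L}\phi)(x)=\sum_{y\in f^{-1}(x)}\phi(y)/|\det Df(y)|$ for the transfer operator of the local diffeomorphism $f$, the change-of-variables formula gives $\int_{\TT}(g\circ f)\,d\mu=\int_{\TT}g\cdot(\mathcal{L}\mathbf{1})\,d\mu$ for bounded Borel $g$, while $f$-invariance of $\mu$ gives $\int_{\TT}(g\circ f)\,d\mu=\int_{\TT}g\,d\mu$; comparing these, and using that $x\mapsto\det D_xf$ is continuous and nonzero (so $\mathcal{L}\mathbf{1}$ is continuous) and that $\mu$ has full support, one gets $\mathcal{L}\mathbf{1}\equiv1$, i.e. $\sum_{y\in f^{-1}(x)}|\det Df(y)|^{-1}=1$ for every $x$. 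I will also use the change-of-variables formula in the form $\int_{\TT}\Psi\,d\mu=\int_{\TT}\sum_{x\in f^{-1}(w)}\Psi(x)/|\det Df(x)|\,d\mu(w)$.

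Fix $x$. For $i\geq0$ and $x_i\in f^{-i}(x)$, the clopen set $[x_i]:=\{\xi\in\pext^{-1}(x):\xi_i=x_i\}$ automatically fixes $\xi_j=f^{i-j}(x_i)$ for $j\leq i$, and these cylinders form a $\pi$-system generating the Borel $\sigma$-algebra of the Cantor fibre $\pext^{-1}(x)$. Assign $[x_i]$ the value $|\det Df^i(x_i)|^{-1}$ (so the whole fibre gets $1$). This assignment is consistent under refinement: by the chain rule and $\mathcal{L}\mathbf{1}\equiv1$,
\[
\sum_{x_{i+1}\in f^{-1}(x_i)}|\det Df^{i+1}(x_{i+1})|^{-1}=|\det Df^i(x_i)|^{-1}\sum_{x_{i+1}\in f^{-1}(x_i)}|\det Df(x_{i+1})|^{-1}=|\det Df^i(x_i)|^{-1}.
\]
Since all cylinders are clopen in a compact space, finite additivity already implies $\sigma$-additivity on the generated algebra, so Carath\'eodory's theorem furnishes a Borel probability $\hmu_x$ on $\pext^{-1}(x)$ realizing these values, unique among Borel measures on $\pext^{-1}(x)$ because the cylinders generate the $\sigma$-algebra.

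Now define $\nu(A):=\int_{\TT}\hmu_x(A\cap\pext^{-1}(x))\,d\mu(x)$ for Borel $A\subseteq\hTTf$. For a cylinder $A=\{\eta:\eta_n\in B\}$ one has $\hmu_x(A\cap\pext^{-1}(x))=\sum_{y\in f^{-n}(x)\cap B}|\det Df^n(y)|^{-1}=(\mathcal{L}^n\mathbf{1}_B)(x)$, which is Borel in $x$; since every cylinder of $\hTTf$ has this shape (the nonnegatively indexed coordinates of a point are determined by the one of highest index), a $\pi$-$\lambda$ argument gives measurability of $x\mapsto\hmu_x(A\cap\pext^{-1}(x))$ for every Borel $A$, and then monotone convergence shows $\nu$ is a Borel measure with $\nu(\hTTf)=1$ and $(\pext)_*\nu=\mu$. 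For $\hf$-invariance, observe that $\hf$ maps $\pext^{-1}(x)$ bijectively onto $E_x:=\{\zeta\in\pext^{-1}(f(x)):\zeta_1=x\}$ and that, checking once more on cylinders via the chain rule, $(\hf|_{\pext^{-1}(x)})_*\hmu_x=|\det Df(x)|\cdot\hmu_{f(x)}|_{E_x}$; hence $\hmu_x(\hf^{-1}A\cap\pext^{-1}(x))=|\det Df(x)|\cdot\hmu_{f(x)}(A\cap E_x)$. Integrating in $x$ and applying the change-of-variables formula with $w=f(x)$ cancels the two Jacobian factors, leaving
\[
\nu(\hf^{-1}A)=\int_{\TT}\sum_{x\in f^{-1}(w)}\hmu_w(A\cap E_x)\,d\mu(w)=\int_{\TT}\hmu_w(A\cap\pext^{-1}(w))\,d\mu(w)=\nu(A),
\]
since $\{E_x:x\in f^{-1}(w)\}$ partitions $\pext^{-1}(w)$. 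Thus $\nu$ is an $\hf$-invariant Borel probability measure with $(\pext)_*\nu=\mu$, so $\nu=\hmu$ by uniqueness of the lift, which gives $\hmu(A)=\int\hmu_x(A)\,d\mu(x)$ for all Borel $A$ and exhibits $\{\hmu_x\}$ as the disintegration of $\hmu$ along the fibres of $\pext$.

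The step I expect to be the main obstacle is the $\hf$-invariance computation: one has to keep careful track of how $\hf$ transports a fibre onto the sub-cylinder $E_x$ of the next fibre and reweights the conditional by $|\det Df(x)|$, and then check that this reweighting cancels precisely against the Jacobian weight appearing in the change-of-variables formula over the base. By comparison the consistency check and the measurability argument are routine, once the identity $\mathcal{L}\mathbf{1}\equiv1$ has been established.
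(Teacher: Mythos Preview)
Your argument is correct. The construction of $\hmu_x$ on cylinders, the consistency check via $\mathcal{L}\mathbf{1}\equiv 1$, the Carath\'eodory extension (using that cylinders are clopen in a compact space), the measurability of $x\mapsto\hmu_x(A)$ via the $\pi$--$\lambda$ theorem, and the $\hf$-invariance computation all go through exactly as you wrote; the cancellation of the Jacobian $|\det Df(x)|$ against the weight in the transfer operator is precisely the point, and you handled it cleanly.

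The paper defers its proof to the Appendix, which is not included in the source provided here, so a line-by-line comparison is not possible. From the references scattered through the paper (the abstract solenoid $\Sol$, the measures $\tbmu_{n,\tx}$, Proposition~\ref{prop:a4}), the Appendix evidently builds an explicit fibre-bundle model of $\hTTf$ and reads off the conditionals from that structure. Your route is the direct, self-contained one: manufacture the candidate disintegration by hand and identify it with $\hmu$ via the uniqueness of the lift that the paper states just before the proposition. This avoids any extra machinery and is entirely adequate for the purposes the paper makes of the result (equations~\eqref{eq:promedio} and the arguments in Section~\ref{sec:continuity}).
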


We refer to the \hyperlink{sec:appendix}{Appendix} for a proof.

\subsection{Lyapunov exponents for the natural extension}\label{sub:lyapunov_exponents_for_the_natural_extension}

Given a cocycle $A:X\to \GL_2(\Real)$ over a map $h:X\to X$ we denote, for $x\in X, v\in\RR\setminus\{0\}$,
\begin{align*}
\chi_{(h,A)}(x,v)=\lim_{n\to+\infty}\frac{\log\norm{A^{(n)}(x)\cdot v}}{n}\\
\chi_{(h,A)}^+(x)=\lim_{n\to+\infty}\frac{\log\norm{A^{(n)}(x)}}{n}\\
\chi_{(h,A)}^-(x)=\lim_{n\to+\infty}\frac{\log\conorm{A^{(n)}(x)}}{n}
\end{align*}
provided that these limits exist.

The differential of the endomorphism $f:\TT\to\TT$ defines a cocycle over $f$, and we denote the associated exponents as $\chi_f(x,v), \chi_f^{+}(x), \chi_f^{-}(x)$. It also defines a cocycle $D\hf$ over $\hf$ by
\[
  D_{\hx}\hf=D_{x_0}f,
\]
where $\hx=(x_n)_{n\geq 0}$, which behaves naturally and in particular with the usual notation
\begin{align*}
D_{\hx}\hf^{(n)}=\begin{cases}
D_{\hf^{n-1}\hx}\hf\circ\cdots D_{\hf\hx}\hf\circ D_{\hx}\hf & n>0\\
Id & n=0\\
(D_{\hf^{-n}\hx}\hf)^{-1}\circ\cdots (D_{\hf^{-2}\hx}\hf)^{-1}\circ (D_{\hf^{-1}\hx}\hf)^{-1} & n<0 
\end{cases}
\end{align*}
one gets, by the chain rule, $D_{\hx}\hf^{(n)}=D_{\hx}(\hf^{n})$, i.e.  
\begin{align*}
D_{\hx}\hf^n=\begin{cases}
D_{x_0}f^n & n>0\\
Id & n=0\\
(D_{x_n}f^n)^{-1} & n<0
\end{cases}
\end{align*}

The exponents of this cocycle are denoted as $\chi_{\hf}(\hx,v), \chi_{\hf}^{+}(\hx), \chi_{\hf}^{-}(\hx)$. From the formula above it is evident that
\begin{align*}
\chi_f(\pext(\hx),v)=\chi_{\hf}(\hx,v)\\
\chi_f^{+}(\pext(\hx))=\chi_{\hf}^{+}(\hx)\\
\chi_f^{-}(\pext(\hx))=\chi_{\hf}^{-}(\hx);
\end{align*}
that is, if one quantity is defined the other is defined as well and both coincide. We can now apply Oseledet's theorem (see for example the discussion in \cite{Ruelle1979}) to the cocycle $D\hf$, and use that these coincide almost everywhere with minus the exponents of $D\hf^{-1}$, and hence by the discussion above we can compute the exponents of $f$ using $\hf^{-1}$. Together with the decomposition of $\hmu$ in terms of the $\{\hmu_x\}$ given in Proposition \ref{pro:desintegracionhmu} we get the following facts, that for convenience we record below.

\begin{lemma}\label{lem:exponentes}
There exists a completely $\hf$-invariant set $\hat\reg \subset \hTTf$ of full $\hmu$ measure such that for every $\hx\in\hat\reg, x=\pext(\hx)$, it holds
\begin{enumerate}
  \item $\chi_{\hf}^{+}(\hx)=-\chi_{\hf^{-1}}^{-}(\hx)=\chi^+_f(x)$.
  \item $\chi_{\hf}^{-}(\hx)=-\chi_{\hf^{-1}}^{+}(\hx)=\chi^-_f(x)$.
\end{enumerate}
Moreover, there exists a completely $f$-invariant set $\reg \subset \pext(\hat\reg)$ of full $\mu$ measure so that for every $x\in\reg$, $\hmu_x(\hat\reg)=1$. 

\end{lemma}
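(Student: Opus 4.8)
The plan is to obtain Lemma~\ref{lem:exponentes} as a bookkeeping consequence of the multiplicative ergodic theorem for the invertible system $(\hf,\hmu)$, together with the disintegration of Proposition~\ref{pro:desintegracionhmu} and the pointwise identities $\chi_f^{\pm}(\pext(\hx))=\chi_{\hf}^{\pm}(\hx)$ recorded just above the statement. First I would apply the two-sided multiplicative ergodic theorem to the cocycle $D\hf\colon\hTTf\to\GL_2(\Real)$ over the homeomorphism $\hf$. The integrability hypothesis costs nothing: $f$ being a $\mathcal C^1$ local diffeomorphism of the compact torus, $\log\norm{Df}$ and $\log\norm{(Df)^{-1}}$ are bounded, and $\hmu$ is $\hf$-invariant, hence also $\hf^{-1}$-invariant since $\hf$ is a homeomorphism. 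This yields an $\hf$-invariant Borel set $\hat{\reg}_0$ of full $\hmu$-measure on which the forward and backward exponents exist and on which $\chi_{\hf}^+(\hx)=-\chi_{\hf^{-1}}^-(\hx)$ and $\chi_{\hf}^-(\hx)=-\chi_{\hf^{-1}}^+(\hx)$; this last relation is the standard correspondence between the extreme exponents of an invertible cocycle and those of its inverse over the inverse base map (using $\conorm{M}=\norm{M^{-1}}^{-1}$ for $M\in\GL_2(\Real)$), precisely as referenced before the lemma via \cite{Ruelle1979}.

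Next I would match this with the base. By Furstenberg--Kesten/Oseledets applied to $Df$ over $(f,\mu)$, the set $\reg_0:=\{x\in\TT:\chi_f^+(x)\text{ and }\chi_f^-(x)\text{ exist}\}$ has full $\mu$-measure, and it is completely $f$-invariant because the chain rule makes $\norm{D_xf^{n+1}}$ and $\norm{D_{f(x)}f^n}$ (and the corresponding conorms) comparable up to a fixed multiplicative constant, so the defining limits exist at $x$ if and only if they exist at $f(x)$, with equal values. On $\hat{\reg}_0\cap\pext^{-1}(\reg_0)$ the identities $\chi_{\hf}^{\pm}(\hx)=\chi_f^{\pm}(\pext(\hx))$ hold, being immediate from $D_{\hx}\hf^n=D_{x_0}f^n$ for $n\ge 0$. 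I would then put $\hat{\reg}:=\bigcap_{n\in\Z}\hf^{n}\bigl(\hat{\reg}_0\cap\pext^{-1}(\reg_0)\bigr)$: a countable intersection of sets of full $\hmu$-measure (by $\hf$-invariance of $\hmu$), completely $\hf$-invariant by construction, and contained in $\hat{\reg}_0\cap\pext^{-1}(\reg_0)$, so assertions (1) and (2) hold at every point of $\hat{\reg}$.

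For the ``moreover'' part, Proposition~\ref{pro:desintegracionhmu} gives $1=\hmu(\hat{\reg})=\int_\TT\hmu_x(\hat{\reg})\,d\mu(x)$, so $\hmu_x(\hat{\reg})=1$ for $x$ in a set $\reg_1$ of full $\mu$-measure; every such $x$ has $\pext^{-1}(x)\cap\hat{\reg}\neq\emptyset$, i.e.\ $x\in\pext(\hat{\reg})$. To upgrade $\reg_0\cap\reg_1$ to a completely $f$-invariant set I would remove its grand orbit: $\bigcup_{j\ge 0}f^{j}(N)$ is $\mu$-null for every $\mu$-null $N$, since each $f^j$ is Lipschitz on the compact torus and hence sends $\mu$-null sets to $\mu$-null sets, and $\bigcup_{k\ge 0}f^{-k}$ of a $\mu$-null set is $\mu$-null because $f$ preserves $\mu$. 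Thus the complement $\reg$ of the grand orbit of $\TT\setminus(\reg_0\cap\reg_1)$ is completely $f$-invariant, of full $\mu$-measure, contained in $\reg_1\subseteq\pext(\hat{\reg})$, and satisfies $\hmu_x(\hat{\reg})=1$ for every $x\in\reg$, which finishes the argument.

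I do not expect a genuine obstacle here: the whole proof is routine repackaging. The only points that demand a moment's care are the two invariance bookkeeping steps just used --- arranging for $\hat{\reg}$ and $\reg$ to be \emph{completely} invariant while keeping full measure (the $\bigcap_{n\in\Z}\hf^{n}(\cdot)$ trick upstairs, the null-grand-orbit remark downstairs) --- and quoting the two-sided form of Oseledets' theorem in the shape that directly identifies the exponents of $\hf$ with the negatives of those of $\hf^{-1}$.
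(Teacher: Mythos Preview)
Your proposal is correct and follows exactly the approach the paper indicates: the paper does not give a formal proof, but rather records the lemma as an immediate consequence of Oseledets' theorem (citing \cite{Ruelle1979}) applied to $D\hf$, the identities $\chi_f^{\pm}(\pext(\hx))=\chi_{\hf}^{\pm}(\hx)$ displayed just before the statement, and the disintegration of Proposition~\ref{pro:desintegracionhmu}. You have simply spelled out these ingredients and added the routine bookkeeping needed to arrange complete invariance of $\hat\reg$ and $\reg$, which the paper leaves implicit.
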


In fact, from Oseledet's theorem it also follows that for $\hx \in \hat \reg$ there exist subspaces $E^{-}_{\hx}, E^{+}_{\hx}$ of $\RR$ so that for $v\in E^{\pm}_{\hx}\setminus\{0\}$,
\[
  \chi_{\hf}^{\pm}(\hx)=\chi_{\hf}(\hx,v)=-\chi_{\hf^{-1}}(\hx,v)=-\chi_{\hf^{-1}}^{\mp}(\hx).
\]

\begin{remark}
The bundle $(\pext)_{*}E^{-}$ defines a measurable bundle over $\reg$, since it depends only on the forward orbit of the point. However $E^{+}$ does not project under $\pext$ to a bundle on $\TT$ because $E^{+}$ may be different at different points on the same fiber. 
\end{remark}

\subsection{Relation between Lyapunov exponents and \texorpdfstring{$C_{\chi}(f), C_{\det}(f)$}{}}

The strategy of proof of \hyperlink{theoremA}{Theorem A} is to estimate $\chi_{\hf^{-1}}^{+}$. More precisely, get explicit lower bounds for 
\[\int_{\pi_{ext}^{-1}(x)} \chi_{\hf^{-1}}^{+}\ d\hmu_x\]
at $\mu$-almost every $x \in \Tor$, which by the previous Lemma gives us analogous bounds for $\chi^+_f(x)$ and $\chi^-_f(x)$ $\mu$-almost everywhere.

\smallskip

Recall the definition of $I(x,v;f^n)$ from \eqref{eq:I}. Using the description of the disintegrations of $\hmu_x$ from Proposition \ref{pro:desintegracionhmu}, we obtain that for $x\in\TT$ and $v\in\mathbb S^1\subset\RR$ an unit vector (or equivalently any $(x,v)\in T^1\TT$),
\begin{equation}\label{eq:promedio}
I(x,v;f^n)=\sum_{y\in f^{-n}(x)}\frac{\log\|(D_yf^n)^{-1}v\|}{|\det(D_yf^n)|}=\int_{\pi_{ext}^{-1}(x)} \log\norm{D_{\hy}\hf^{-n}(v)}\ d\hmu_x(\hy).
\end{equation}

Let us fix the following notation. For any unit tangent vector $(x,v)$ at $x\in\TT$ and for any $y\in f^{-i}(x)$, denote $F_y^{-i}v=\frac{(D_yf^i)^{-1}v}{\|(D_yf^i)^{-1}v\|}$, so that $(y,F_y^{-i}v)$ is the unit tangent vector at $y$ obtained by pulling back $v$ under $Df^i$ and normalizing.
\begin{lemma}\label{lem:combinacionconvexa}
For any $n\in\mathbb N$ we have
\[
I(x,v;f^n)=\sum_{i=0}^{n-1}\sum_{y\in f^{-i}(x)}\frac 1{|\det(D_yf^i)|}I(y,F_y^{-i}v;f).
\]
In other words, $\frac 1nI(x,v;f^n)$ is a convex combination of other $I(y,w;f)$.
\end{lemma}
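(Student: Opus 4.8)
The plan is to establish the identity by exploiting the multiplicative/telescoping structure of the cocycle $D\hf^{-n}$ together with the defining property of the fiber measures $\hmu_x$. Recall from \eqref{eq:promedio} that $I(x,v;f^n)=\int_{\pi_{ext}^{-1}(x)}\log\|D_{\hy}\hf^{-n}(v)\|\,d\hmu_x(\hy)$. First I would write $\log\|D_{\hy}\hf^{-n}(v)\|$ as a telescoping sum along the backward orbit: setting $\hy=(y_0,y_1,\dots)$ with $y_0=x$, and letting $w_i=F_{y_i}^{-i}v$ be the normalized pullback of $v$ to $y_i$ (so $w_0=v$), one has
\[
\log\|D_{\hy}\hf^{-n}(v)\|=\sum_{i=0}^{n-1}\bigl(\log\|D_{\hy}\hf^{-(i+1)}(v)\|-\log\|D_{\hy}\hf^{-i}(v)\|\bigr)
=\sum_{i=0}^{n-1}\log\bigl\|(D_{y_{i+1}}f)^{-1}w_i\bigr\|,
\]
using the chain rule $D_{\hy}\hf^{-(i+1)}=(D_{y_{i+1}}f)^{-1}\circ D_{\hy}\hf^{-i}$ and that $w_i$ is $D_{\hy}\hf^{-i}(v)$ renormalized. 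Note $\log\|(D_{y_{i+1}}f)^{-1}w_i\|$ is precisely one of the summands appearing in $I(y_i,w_i;f)=\sum_{y'\in f^{-1}(y_i)}\frac{\log\|(D_{y'}f)^{-1}w_i\|}{\det(D_{y'}f)}$, indexed by the specific preimage $y'=y_{i+1}$.

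Next I would integrate this telescoping sum against $\hmu_x$ and interchange the (finite) sum over $i$ with the integral, reducing the problem to showing, for each fixed $i$,
\[
\int_{\pi_{ext}^{-1}(x)}\log\bigl\|(D_{y_{i+1}}f)^{-1}w_i\bigr\|\,d\hmu_x(\hy)
=\sum_{y\in f^{-i}(x)}\frac{1}{\det(D_yf^i)}\,I(y,F_y^{-i}v;f).
\]
To see this, decompose the fiber $\pi_{ext}^{-1}(x)$ according to the first $i+1$ coordinates $(y_0,\dots,y_{i+1})$; by Proposition \ref{pro:desintegracionhmu} the $\hmu_x$-mass of the cylinder fixing $y_i=y$ is $|\det D f^i(y)|^{-1}=\det(D_yf^i)^{-1}$ (all Jacobians being positive here since $f$ is an orientation-preserving area-preserving local diffeomorphism — I would make this sign remark explicit, or insert absolute values throughout as in \eqref{eq:I}), and within that cylinder the conditional mass of the sub-cylinder further fixing $y_{i+1}=y'$ is $\det(D_{y'}f)^{-1}$. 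Since the integrand $\log\|(D_{y_{i+1}}f)^{-1}w_i\|$ depends only on $(y_i,y_{i+1})=(y,y')$ — here $w_i=F_y^{-i}v$ depends only on $y_i=y$ — the integral over the fiber collapses to
\[
\sum_{y\in f^{-i}(x)}\frac{1}{\det(D_yf^i)}\sum_{y'\in f^{-1}(y)}\frac{\log\|(D_{y'}f)^{-1}F_y^{-i}v\|}{\det(D_{y'}f)}
=\sum_{y\in f^{-i}(x)}\frac{1}{\det(D_yf^i)}\,I(y,F_y^{-i}v;f),
\]
which is exactly the claimed per-$i$ identity. Summing over $i=0,\dots,n-1$ gives the lemma.

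For the final sentence (that $\frac1n I(x,v;f^n)$ is a convex combination of terms $I(y,w;f)$), I would simply observe that the coefficients $\frac{1}{n\det(D_yf^i)}$ are nonnegative and that $\sum_{i=0}^{n-1}\sum_{y\in f^{-i}(x)}\frac{1}{\det(D_yf^i)}=\sum_{i=0}^{n-1}\hmu_x(\pi_{ext}^{-1}(x))=n$, since $\sum_{y\in f^{-i}(x)}\det(D_yf^i)^{-1}=1$ is precisely the statement that $\hmu_x$ is a probability measure (the total mass of the cylinders at level $i$). The main obstacle is essentially bookkeeping: correctly identifying which cylinder masses and which telescoping summands match up, and being careful that the normalization $w_i=F_{y_i}^{-i}v$ is consistent with how $I(\cdot,\cdot;f)$ pulls back vectors one step at a time — i.e. that $F_{y'}^{-1}(F_{y}^{-i}v)=F_{y'}^{-(i+1)}v$ for $y'\in f^{-1}(y)$, which follows from the chain rule. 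No hard analysis is involved, only Fubini for the finite disintegration and the multiplicativity of the cocycle.
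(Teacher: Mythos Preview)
Your argument is correct, but it proceeds differently from the paper. The paper argues by induction on $n$, establishing the one-step recursion
\[
I(x,v;f^{n+1})=I(x,v;f^n)+\sum_{y\in f^{-n}(x)}\frac{1}{\det(D_yf^n)}\,I(y,F_y^{-n}v;f)
\]
by a direct manipulation of the double sum (splitting $y\in f^{-(n+1)}(x)$ as $z\in f^{-1}(y)$, $y\in f^{-n}(x)$, and using $\sum_{z\in f^{-1}(y)}\det(D_zf)^{-1}=1$). Your route instead invokes the integral representation \eqref{eq:promedio}, telescopes $\log\|D_{\hy}\hf^{-n}v\|$ along the backward orbit, and then reads off each summand from the cylinder weights of $\hmu_x$ given in Proposition~\ref{pro:desintegracionhmu}. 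The paper's approach is slightly more self-contained (it does not appeal to the natural extension or the disintegration, only to the algebraic definition of $I$), and it isolates the recursion, which is what is actually used later to deduce Corollary~\ref{cor:convex}. Your approach is more conceptual: it explains \emph{why} the formula holds --- it is nothing but conditioning the fiber integral on the first $i{+}1$ coordinates --- and makes the convex-combination statement immediate, since the coefficients are exactly the cylinder masses of a probability measure. Either argument is fine here; your sign remark about $\det$ versus $|\det|$ is well taken and applies equally to the paper's computation.
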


\begin{proof}
We will argue by induction. Assume that the formula holds for $n$, then
\begin{align*}
I(x,v;f^{n+1})&=\sum_{y\in f^{-n-1}(x)}\frac{\log\|(D_yf^{n+1})^{-1}v\|}{|\det(D_yf^{n+1})|}\\
&=\sum_{y\in f^{-n}(x)}\sum_{z\in f^{-1}(y)}\frac{\log\|(D_zf)^{-1}(D_yf^n)^{-1}v\|}{|\det(D_zf)\det(D_yf^n)|}\\
&=\sum_{y\in f^{-n}(x)}\sum_{z\in f^{-1}(y)}\frac{\log\|(D_zf)^{-1}F_y^{-n}v\|+\log\|(D_yf^n)^{-1}v\|}{|\det(D_zf)\det(D_yf^n)|}\\
&=\sum_{y\in f^{-n}(x)}\frac 1{|\det(D_yf^n)|}\sum_{z\in f^{-1}(y)}\frac{\log\|(D_zf^{n})^{-1}F_y^{-n}v\|}{|\det(D_zf)|}\\
&+\sum_{y\in f^{-n}(x)}\frac{\log\|(D_yf^n)^{-1}v\|}{|\det(D_yf^n)|}\sum_{z\in f^{-1}(y)}\frac 1{|\det(D_zf)|}\\
=&\sum_{y\in f^{-n}(x)}\frac 1{|\det(D_yf^n)|}I(y,F_y^{-n}v;f)+\sum_{y\in f^{-n}(x)}\frac{\log\|(D_yf^n)^{-1}v\|}{|\det(D_yf^n)|}\\
&=I(x,v;f^n)+\sum_{y\in f^{-n}(x)}\frac 1{|\det(D_yf^n)|}I(y,F_y^{-n}v;f).
\end{align*}
This concludes the proof of the formula.

Observe that the sum of the coefficients of $\sum_{y\in f^{-i}(x)}\frac 1{|\det(D_yf^i)|}I(y,F_y^{-i}v;f)$ is equal to one for every $i$, so the sum of the coefficients of $I(x,v;f^n)$ is equal to $n$. Then indeed  $\frac 1nI(x,v;f^n)$ is a convex combination of other $I(y,w;f)$.
\end{proof}

\begin{corollary}\label{cor:convex}
For any $(x,v)\in T^1\TT$ and any natural numbers $n,m$, we have that $\frac 1nI(x,v;f^{nm})$ is a convex combination of other $I(y,w;f^m)$.
\end{corollary}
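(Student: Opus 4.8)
The plan is to apply Lemma~\ref{lem:combinacionconvexa} verbatim, but with the map $f$ replaced by its iterate $g := f^m$.

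First I would record that $g = f^m$ is again an area-preserving $\mathcal{C}^1$ local diffeomorphism of $\TT$ (a composition of such maps), so that in particular $\sum_{z \in g^{-1}(y)} \det(D_z g)^{-1} = 1$ for every $y \in \TT$; this identity, which is a consequence of $g_*\mu = \mu$, together with the chain rule is all that the proof of Lemma~\ref{lem:combinacionconvexa} uses about the base map, so that lemma applies to $g$ without change.

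Next I would note that the quantity $I(x,v;\cdot)$ defined in \eqref{eq:I} depends only on the single map written as its third argument, and that $g^k = f^{mk}$ as maps of $\TT$ with $D_y g^k = D_y f^{mk}$ by the chain rule. Hence $I(x,v;g^k) = I(x,v;f^{mk})$ for all $k$, and the unit vector $F_y^{-i}v = (D_y g^i)^{-1}v/\|(D_y g^i)^{-1}v\|$ built from $g^i = f^{mi}$ coincides with $F_y^{-mi}v$ built from $f$. Feeding these identifications into the conclusion of Lemma~\ref{lem:combinacionconvexa} for $g$ gives
\[
\frac{1}{n} I(x,v;f^{nm}) = \frac{1}{n} I(x,v;g^n) = \frac{1}{n} \sum_{i=0}^{n-1} \sum_{y \in f^{-mi}(x)} \frac{1}{\det(D_y f^{mi})}\, I\!\left(y, F_y^{-mi}v; f^m\right),
\]
where, by the same lemma applied to $g$, the coefficients $\det(D_y f^{mi})^{-1}$ (summed over $0\le i\le n-1$ and $y\in f^{-mi}(x)$) add up to $n$; so after dividing by $n$ the right-hand side is a convex combination of terms of the form $I(y,w;f^m)$, which is the claim.

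There is essentially no obstacle here: the only points needing a line of justification are the bookkeeping identity $I(x,v;g^k)=I(x,v;f^{mk})$ and the verification that $f^m$ retains the fibrewise unit-mass property $\sum_{z\in (f^m)^{-1}(y)}\det(D_z f^m)^{-1}=1$ used in Lemma~\ref{lem:combinacionconvexa}, both immediate from the chain rule and area preservation.
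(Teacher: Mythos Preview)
Your proof is correct and is exactly the intended approach: the corollary is stated in the paper without proof, and the implicit argument is precisely to apply Lemma~\ref{lem:combinacionconvexa} with $f$ replaced by $g=f^m$, using $g^i=f^{mi}$ and the area-preservation identity $\sum_{z\in g^{-1}(y)}\det(D_zg)^{-1}=1$.
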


Remember that
\[
C_{\chi}(f)=\sup_{m\in\mathbb N}\frac 1m\inf_{(x,v)\in T^1\TT}I(x,v;f^m).
\]
A straightforward consequence of the definition and of Corollary \ref{cor:convex} is the fact that the definition of $C_{\chi}(f)$ is actually independent of the election of the norm on $T\TT$, and can be rewritten as
\[
C_{\chi}(f)=\lim_{m\rightarrow\infty}\frac 1m\inf_{(x,v)\in T^1\TT}I(x,v;f^m).
\] 

We also have that
\[
  C_{\det}(f)=\sup_{n\in\mathbb N}\frac 1n\inf_{x\in M}\log(|\det(Df_x^n)|)=\lim_{n\rightarrow\infty} \frac 1n \inf_{x\in M}\log(|\det(Df_x^n)|).
\]
\color{black}

The following proposition gives bounds on the Lyapunov exponents at almost every point in terms of $C_{\chi}(f)$ and $C_{\det}(f)$, and establishes the first part of \hyperlink{theoremA}{Theorem A}.

\begin{proposition}\label{pro:existeexpneg}
For $\mu$ almost every $x\in\TT$ we have $\chi^{-}_f(x)\leq -C_{\chi}(f)$ and $\chi^{+}_f(x)\geq C_{\chi}(f)+C_{\det}(f)$.
In particular,
\begin{enumerate}
\item
If $C_{\chi}(f)>0$ ($f\in\mathcal U$), then $\chi_f^-(x)<0<\chi_f^+(x)$ for almost every $x\in\TT$;
\item
If $C_{\chi}(f)>-\frac 12C_{\det}(f)$ ($f\in\mathcal U_1$), then $\chi_f^-(x)<\chi_f^+(x)$ for almost every $x\in\TT$.
\end{enumerate}
\end{proposition}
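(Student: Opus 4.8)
The plan is to pull the estimate back to the natural extension and use the identification of $I(x,v;f^n)$ with a fibrewise integral of the inverse cocycle, established in equation \eqref{eq:promedio}, together with the Birkhoff/Oseledets structure recorded in Lemma \ref{lem:exponentes}. First I would fix a full-measure, completely $f$-invariant set $\reg$ as in Lemma \ref{lem:exponentes}, so that for $x\in\reg$ we have $\hmu_x(\hat\reg)=1$ and on $\hat\reg$ the cocycle $D\hf^{-1}$ has a genuine Oseledets splitting. For $\hx\in\hat\reg$ and a generic unit vector $v$, Oseledets' theorem gives $\lim_{n\to\infty}\frac1n\log\norm{D_{\hy}\hf^{-n}(v)}=\chi^{+}_{\hf^{-1}}(\hx)$, which by Lemma \ref{lem:exponentes}(1) equals $-\chi^{-}_f(\pext(\hx))=-\chi^{-}_f(x)$; note $\chi^-_f$ is constant on the fibre $\pext^{-1}(x)$ since it depends only on the forward orbit. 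The idea is then to pass $\lim_n\frac1n I(x,v;f^n)$ through the fibre integral in \eqref{eq:promedio}.

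Concretely: by \eqref{eq:promedio}, $\frac1n I(x,v;f^n)=\int_{\pext^{-1}(x)}\frac1n\log\norm{D_{\hy}\hf^{-n}(v)}\,d\hmu_x(\hy)$. I would dominate $\frac1n\log\norm{D_{\hy}\hf^{-n}(v)}$ above and below by $\pm\log\norm{Df^{\pm1}}_\infty$ (using that $f$ is a $\mathcal C^1$ local diffeomorphism of the compact torus, so $\norm{Df}$ and $\norm{Df^{-1}}$ are uniformly bounded) and invoke dominated convergence on the probability space $(\pext^{-1}(x),\hmu_x)$: the integrand converges $\hmu_x$-a.e.\ to $\chi^{+}_{\hf^{-1}}(\hy)=-\chi^-_f(x)$, hence $\lim_n\frac1n I(x,v;f^n)=-\chi^-_f(x)$ for $\mu$-a.e.\ $x$ and every $v$ outside a (possibly $\hx$-dependent) bad direction. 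Since $C_\chi(f)=\lim_m\frac1m\inf_{(x,v)}I(x,v;f^m)\le \liminf_n \frac1n I(x,v;f^n)$ for each fixed $(x,v)$, choosing $v$ generic at $\mu$-a.e.\ $x$ yields $C_\chi(f)\le -\chi^-_f(x)$, i.e.\ $\chi^-_f(x)\le -C_\chi(f)$ a.e. The bound $\chi^+_f(x)\ge C_\chi(f)+C_\det(f)$ then follows from the integral identity $\chi^+_f(x)+\chi^-_f(x)=\lim_n\frac1n\log\det D_xf^n$, which is $\ge C_\det(f)$ by the (Birkhoff-type) reformulation $C_\det(f)=\lim_n\frac1n\inf_x\log\det Df^n_x$ noted just before the statement; combining gives $\chi^+_f(x)\ge C_\det(f)-\chi^-_f(x)\ge C_\det(f)+C_\chi(f)$. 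The two numbered consequences are then immediate: $C_\chi(f)>0$ forces $\chi^-_f<0$ and, since $C_\det(f)>0$, also $\chi^+_f>0$; while $C_\chi(f)>-\tfrac12 C_\det(f)$ gives $\chi^+_f-\chi^-_f\ge 2C_\chi(f)+C_\det(f)>0$.

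The main obstacle I anticipate is justifying the interchange of limit and fibre integral cleanly, i.e.\ making sure the a.e.\ convergence $\frac1n\log\norm{D_{\hy}\hf^{-n}(v)}\to\chi^+_{\hf^{-1}}(\hy)$ holds $\hmu_x$-a.e.\ on the fibre for $\mu$-a.e.\ base point $x$ and for a \emph{fixed} test vector $v$; Oseledets gives this for $v$ avoiding the one-dimensional Oseledets subspace $E^-_{\hy}$ associated with the smaller exponent of $\hf^{-1}$, and one must check that, for a generic fixed $v$, the set of $\hy$ with $v\in E^-_{\hy}$ is $\hmu_x$-null — this is where one uses that there are only countably many such subspaces along a fibre, or alternatively one works directly with $\chi^+_{\hf^{-1}}$ as a norm limit $\frac1n\log\norm{D_{\hy}\hf^{-n}}$ and notes $\log\norm{D_{\hy}\hf^{-n}(v)}\ge \log\conorm{D_{\hy}\hf^{-n}}$ won't suffice, so the cleanest route is: take $v$ and observe $\hmu_x$-a.e.\ $\hy$ has $v\notin E^-_{\hy}$ because $\pext$-disintegrations are non-atomic and the bad set is contained in a measurable graph over a null set of fibre directions. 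Once this measure-theoretic point is settled the rest is routine estimation.
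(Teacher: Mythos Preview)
Your approach is essentially the same as the paper's --- pass to the fibre integral \eqref{eq:promedio} and use dominated convergence --- but you introduce an unnecessary difficulty by aiming for the \emph{equality} $\lim_n \tfrac1n I(x,v;f^n)=-\chi^-_f(x)$, which then forces you to worry about the genericity of $v$ (i.e.\ whether $v\notin E^{+}_{\hy}$ for $\hmu_x$-a.e.\ $\hy$). The paper sidesteps this completely: after dominated convergence gives
\[
\int_{\pext^{-1}(x)} \chi_{\hf^{-1}}(\hy,v)\, d\hmu_x(\hy)\ \ge\ C_m,
\]
one simply uses the trivial bound $\chi^{+}_{\hf^{-1}}(\hy)\ge \chi_{\hf^{-1}}(\hy,v)$, valid for \emph{every} unit vector $v$, together with the fact (Lemma \ref{lem:exponentes}) that $\chi^{+}_{\hf^{-1}}(\hy)=-\chi^-_f(x)$ is constant on the fibre, to conclude $-\chi^-_f(x)\ge C_m$ for all $m$. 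No choice of $v$, no bad directions.

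Your proposed resolution of the ``obstacle'' is also not solid as written: the Remark immediately following the paper's proof explicitly points out that the degenerate situation --- $E^{+}(\hx)$ coinciding on a set of positive $\hmu_x$-measure in the fibre --- \emph{can} occur, so one cannot assert for a fixed $v$ that the bad set is automatically $\hmu_x$-null, and your phrase ``measurable graph over a null set of fibre directions'' does not parse into an argument. (What would salvage your route is the observation that the pushforward of $\hmu_x$ by $\hy\mapsto[E^{+}_{\hy}]$ is a probability on $\Pp\RR$ and hence has at most countably many atoms, so \emph{some} $v$ works at each $x$; but this is both different from what you wrote and, as noted, entirely avoidable.) The deduction of $\chi^+_f\ge C_\chi(f)+C_{\det}(f)$ and items (1)--(2) is fine and matches the paper.
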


\begin{proof}
Observe that for $\mu$-a.e. $x\in\TT$ we have
\[
 \chi^{+}_f(x)+\chi^{-}_f(x)=\lim_{n\rightarrow\infty}\frac 1n \log |\det D_xf^n|\geq C_{\det}(f),
\]
thus if $\chi^{-}_f(x)\leq -C_{\chi}$ $\mu$-almost everywhere, this immediately implies that also $\chi^{+}_f(x)\geq C_{\chi}(f)+C_{\det}(f)$.

Let $C_m=\frac 1m\inf_{(x,v)\in T^1\TT}I(x,v;f^m)$. From Corollary \ref{cor:convex} we have that for every $n,m\geq 1$ and unit tangent vector $(x,v)$,
\[
  \frac{I(x,v;g^{nm})}{nm}=\frac{1}{nm}\int_{\pi_{ext}^{-1}(x)} \log\norm{D_{\hy}\hf^{-nm}v}\ d\hmu_x(\hy)\geq C_m.
\]
By Lemma \ref{lem:exponentes} and the dominated convergence theorem it follows that for almost every $x$ and any unit vector $v$,
\[
\int_{\pi_{ext}^{-1}(x)} \chi_{\hf^{-1}}(\hy,v)\ d\hmu_x(\hy)\geq C_m,
\]
therefore
\begin{align*}
-\chi^-_f(x)&=\int_{\pi_{ext}^{-1}(x)} \chi_{\hf^{-1}}^{+}(\hy)\ d\hmu_x(\hy)\geq \int_{\pi_{ext}^{-1}(x)} \chi_{\hf^{-1}}(\hy,v)\ d\hmu_x(\hy)\geq \sup_{m\in\mathbb N}C_m\\
&=C_{\chi}(f).
\end{align*}
\end{proof}

\begin{remark}
Let us make some remarks on the constant $C_{\chi}(f)$ and the NUH property. The fact that we have a negative exponent $\chi^-(\hat x)\leq-\chi$ at $\hat\mu$ almost every point is equivalent to the fact that for $\mu$-almost every $x\in\TT$, and for all unit vectors $v$ with the possible exception of countably many vectors we have
\[
\lim_{n\rightarrow\infty}\frac 1nI(x,v;f^n)\geq\chi.
\]
The possible exceptional vectors come from the possibility of having the following degenerate situation: the $E^+(\hat x)$ spaces coincide for a set of positive $\hat\mu_x$ measure on the fiber $\pi_{ext}^{-1}(x)$.

In other words, the NUH property with uniform bound on the exponents is equivalent to
\[
C_{\chi}'(f)=\essinf_{(x,v)\in T^1\TT}\left(\lim_{n\rightarrow\infty}\frac 1nI(x,v;f^n)\right)\geq\chi>0,
\]
where $C_{\chi}'(f)>C_{\chi}(f)$ would be an optimal bound. We could reformulate the results using $C_{\chi}'(f)$ and a similar $C_{\det}'(f)$ instead of $C_{\chi}(f)$ and $C_{\det}(f)$, however these constants seem to be difficult to estimate, and may not depend semi-continuously on $f$, so the corresponding sets of endomorphisms would not be open. 

Suppose that furthermore $E^+$ is independent of the past: for $\mu$-almost every $x$, for any subspace $E\subset\mathbb R^2$, $\hat\mu_x\left(\{\hat x\in\pi_{ext}^{-1}(x):\ E^+(\hat x)=E\}\right)=0$. Then, for $\mu$-almost every $x\in\TT$ and for {\bf any unit vector $v$} we have
\[
\lim_{n\rightarrow\infty}\frac 1nI(x,v;f^n)\geq\chi.
\]
\end{remark}

\begin{remark}
As we mentioned before, a related approach for establishing positivity of the top Lyapunov exponent (with respect the stationary measure) in the context of random surface diffeomorphisms appears in \cite{Ch20} and \cite{Liu16}. 
\end{remark}

\subsection{Some algebra}\label{ss:Smith}

Denote
\begin{align}
 \mathscr{H}_{NH}(\Tor)=\{E\in\Mat:\det E \neq 0, E\neq k\cdot Id, k\in\Z\}.
\end{align}

 For $E\in \mathscr{H}_{NH}(\Tor)$, let $\tau_1(E)$ be the largest common factor of the entries of $E$; that is,
\begin{align}
 \tau_1(E) = \max\{ k \in \N: E = k\cdot G \text{ for some }G \in \Mat\}.
 \end{align}
Let $\tau_2(E) = |\det(E)|/\tau_1(E)$: then $\tau_2(E)$ is an integer, and since $E=\tau_1(E)\cdot G$ for some integer matrix $G$, $|\det(E)|=\tau_1(E)^2|\det(G)|$ and $\tau_1(E) | \tau_2(E)$. The numbers $\tau_1(E), \tau_2(E)$ are the elementary divisors of $E$. The quantity
\[
\mathrm{d}=| \det E |= \tau_1\cdot \tau_2,
\]
coincides the topological degree of the induced endomorphism $E:\Tor\to\Tor$. We need the following result in linear algebra (see for example \cite{Jacobson2009}).

\begin{theorem}[Smith normal form]
There exist $L, R \in \GL_2(\Z)$ such that $E = L\cdot D\cdot R$, where
\[D = \left(\begin{matrix}
\tau_1 & 0 \\
0 & \tau_2
\end{matrix} \right) .\]
\end{theorem}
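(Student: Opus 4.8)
The plan is to run the classical reduction algorithm via integer row and column operations, which correspond exactly to left and right multiplication by elements of $\GL_2(\Z)$: adding an integer multiple of one row (column) to the other, transposing the two rows (columns), and changing the sign of a row (column) are all realized by matrices of determinant $\pm 1$. Since $\det E \neq 0$, the matrix $E$ has a nonzero entry, so among all matrices $L'ER'$ with $L',R'\in\GL_2(\Z)$ one may choose one, say $E'=(a_{ij})$, whose entry $a_{11}$ is nonzero and has minimal absolute value among all nonzero entries occurring in \emph{any} such $L'ER'$; a transposition of rows and/or columns lets us arrange that this minimizing entry sits in position $(1,1)$.

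First I would show that $a_{11}$ divides $a_{12}$ and $a_{21}$. If, say, $a_{12}=qa_{11}+r$ with $0<|r|<|a_{11}|$, then subtracting $q$ times the first column from the second produces a matrix in the same $\GL_2(\Z)\times\GL_2(\Z)$-orbit having a nonzero entry of absolute value $|r|<|a_{11}|$, contradicting minimality; the argument for $a_{21}$ is identical with rows in place of columns. Hence, after one column and one row operation, we may assume $a_{12}=a_{21}=0$, so that $E'=\mathrm{diag}(a_{11},b)$ with $b\neq 0$ (again because $\det E\neq 0$). Next I would show $a_{11}\mid b$: if not, adding the second row to the first gives $\left(\begin{smallmatrix} a_{11} & b\\ 0 & b\end{smallmatrix}\right)$, whose $(1,2)$ entry $b$ is not a multiple of $a_{11}$, and the previous paragraph again yields a strictly smaller nonzero entry in the orbit, a contradiction. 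Finally, multiplying on the left and right by suitable $\mathrm{diag}(\pm1,\pm1)\in\GL_2(\Z)$ we may take $a_{11}=\tau_1'>0$ and $b=\tau_2'>0$ with $\tau_1'\mid\tau_2'$, and set $D=\mathrm{diag}(\tau_1',\tau_2')$.

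It remains to identify $\tau_1'$ with the largest common factor of the entries of $E$ and to obtain $\tau_1'\tau_2'=|\det E|$. The latter is immediate from multiplicativity of the determinant together with $\det L,\det R\in\{\pm1\}$. For the former, observe that the gcd of the entries of a matrix is invariant under the $\GL_2(\Z)\times\GL_2(\Z)$ action: every entry of $L'ER'$ is a $\Z$-linear combination of the entries of $E$, hence divisible by $\gcd$ of those entries, and since the action is invertible the reverse divisibility also holds. Therefore $\gcd$ of the entries of $E$ equals $\gcd(\tau_1',\tau_2')=\tau_1'$ (as $\tau_1'\mid\tau_2'$), which is precisely $\tau_1(E)$ by its definition, and then $\tau_2'=|\det E|/\tau_1'=\tau_2(E)$.

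The only genuinely delicate point is the divisibility propagation — in particular the ``add a row'' maneuver needed to force $a_{11}\mid b$ after the off-diagonal entries have been cleared. Everything else is a finite descent on the quantity $\min\{|a|: a \text{ is a nonzero entry of some matrix in the orbit of }E\}$, which terminates because it is a strictly decreasing sequence of positive integers.
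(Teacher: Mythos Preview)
Your proof is correct and is essentially the classical descent argument for the Smith normal form in the $2\times 2$ case. Note, however, that the paper does not actually prove this theorem: it is stated as a standard fact from linear algebra with a reference to Jacobson's textbook, so there is no ``paper's own proof'' to compare against. Your argument is a perfectly valid self-contained proof of the cited result.
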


For $E:\Tor\to\Tor$, the pre-images of $x\in\TT$ are of the form
\[
E^{-1}(x)=y+\pcov(E^{-1}(\ZZ))
\]
where $E(y)=x$: we are thus interested in understanding the lattice $E^{-1}(\ZZ)$.

\begin{proposition}\label{SNF}
In the hypotheses above, there exists $P \in \GL_2(\Z)$ such that the matrix $G = P^{-1}\cdot E \cdot P$ satisfies:
\begin{equation} \label{preimages}
G^{-1}(\ZZ) = \left\{ \left( \begin{smallmatrix}
\frac{i}{\tau_2} \\
\frac{j}{\tau_1} 
\end{smallmatrix} \right) : i, j \in \Z \right\}.
\end{equation}

Moreover, if $E$ is not a homothety, then $P$ may be chosen so that $e_2=(0,1)$ is not an eigenvector of $G$.

\end{proposition}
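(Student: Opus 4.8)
The plan is to read off the preimage lattice $E^{-1}(\ZZ)$ from the Smith normal form and then obtain the normalization by conjugating by a single, explicit matrix. By the Smith normal form theorem we may write $E = LDR$ with $L,R\in\GL_2(\Z)$ and $D=\mathrm{diag}(\tau_1,\tau_2)$. Since $L^{-1},R^{-1}\in\Mat$ preserve $\ZZ$, the lattice $\Lambda:=E^{-1}(\ZZ)=R^{-1}D^{-1}L^{-1}\ZZ$ equals $R^{-1}D^{-1}\ZZ$, where $D^{-1}\ZZ=\{(m/\tau_1,n/\tau_2):m,n\in\Z\}$; moreover $\ZZ\subseteq\Lambda\subseteq\tfrac1{\tau_2}\ZZ$, the last inclusion because $\tau_1\mid\tau_2$. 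For any $P\in\GL_2(\Z)$ the conjugate $G=P^{-1}EP$ lies in $\Mat$, and $G^{-1}(\ZZ)=P^{-1}\Lambda$ because $P\ZZ=\ZZ$; so the whole statement reduces to choosing $P$ so that $P^{-1}\Lambda$ is the lattice on the right-hand side of \eqref{preimages}.

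For the first assertion I would take $P_0=R^{-1}S$ with $S=\left(\begin{smallmatrix}0&1\\1&0\end{smallmatrix}\right)\in\GL_2(\Z)$ the coordinate flip. Then $P_0^{-1}=SR$, so $P_0^{-1}\Lambda=SR\cdot R^{-1}D^{-1}\ZZ=SD^{-1}\ZZ=\{(n/\tau_2,m/\tau_1):m,n\in\Z\}$, which is exactly the lattice appearing in \eqref{preimages}. Hence $G_0=P_0^{-1}EP_0$ does the job.

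For the ``moreover'' I first parametrize all admissible $P$. A matrix $P\in\GL_2(\Z)$ satisfies \eqref{preimages} if and only if $P^{-1}\Lambda=P_0^{-1}\Lambda$, i.e. $P_0P^{-1}\in\mathrm{Stab}(\Lambda):=\{Q\in\GL_2(\Z):Q\Lambda=\Lambda\}$; equivalently $P=Q^{-1}P_0$ for some $Q\in\mathrm{Stab}(\Lambda)$. For such $P$ one has $Pe_2=Q^{-1}P_0e_2=Q^{-1}R^{-1}e_1$ (using $Se_2=e_1$), and $e_2$ is an eigenvector of $G=P^{-1}EP$ if and only if $Pe_2$ is an eigenvector of $E$. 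So it is enough to find $Q\in\mathrm{Stab}(\Lambda)$ for which the nonzero vector $Q^{-1}w_0$, with $w_0:=R^{-1}e_1$, is not an eigenvector of $E$.

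The last point — the one needing the most care — is that $\mathrm{Stab}(\Lambda)$ is large. Using $\ZZ\subseteq\Lambda\subseteq\tfrac1{\tau_2}\ZZ$, every $Q\in\GL_2(\Z)$ with $Q\equiv I\pmod{\tau_2}$ lies in $\mathrm{Stab}(\Lambda)$: if $v\in\Lambda$ then $Q-I$ has all entries divisible by $\tau_2$ and $\tau_2 v\in\ZZ$, so $(Q-I)v\in\ZZ\subseteq\Lambda$, hence $Qv\in\Lambda$; reducing $QQ^{-1}=I$ modulo $\tau_2$ shows $Q^{-1}\equiv I\pmod{\tau_2}$ as well, and applying the same argument to $Q^{-1}$ gives $Q\Lambda=\Lambda$. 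In particular $\mathrm{Stab}(\Lambda)$ contains $\left(\begin{smallmatrix}1&0\\ \tau_2k&1\end{smallmatrix}\right)$ and $\left(\begin{smallmatrix}1&\tau_2k\\0&1\end{smallmatrix}\right)$ for all $k\in\Z$; applying whichever of these two families does not fix the line through $w_0$ produces infinitely many vectors lying on pairwise distinct lines through the origin. Since $E$ is not a homothety it has at most two eigenlines in $\RR^2$, so all but finitely many of these vectors avoid them; choosing the corresponding $Q$ and setting $P=Q^{-1}P_0$ yields $G=P^{-1}EP$ satisfying \eqref{preimages} with $e_2$ not an eigenvector of $G$. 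The genuine obstacle is precisely this freedom-counting step — equivalently, that the arithmetic group $\mathrm{Stab}(\Lambda)$ is infinite and its orbit of a nonzero vector is not contained in finitely many lines; everything else is bookkeeping around the Smith normal form.
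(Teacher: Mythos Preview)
Your proof is correct. The first half (choosing $P_0=R^{-1}S$ with $S$ the coordinate flip) is exactly what the paper does, with the same computation.

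For the ``moreover'' part your argument and the paper's diverge in style. The paper works on the \emph{target} lattice $G_0^{-1}(\ZZ)=\{(i/\tau_2,j/\tau_1)\}$ rather than on $\Lambda=E^{-1}(\ZZ)$: it simply post-multiplies $P_0$ by the shear $S_k=\left(\begin{smallmatrix}1&k\\0&1\end{smallmatrix}\right)$ with $k\in\Z$, observes that $\tau_1\mid\tau_2$ forces $S_k$ to permute $G_0^{-1}(\ZZ)$, and then uses that $G_0$ (being conjugate to $E$) has at most two eigenlines to choose $k$ so that $S_k e_2=ke_1+e_2$ is not an eigenvector. This exploits the divisibility $\tau_1\mid\tau_2$ more sharply and avoids the congruence-subgroup detour entirely. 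Your route---parametrizing all admissible $P$ via $\mathrm{Stab}(\Lambda)$, bounding $\mathrm{Stab}(\Lambda)$ from below by the principal congruence subgroup mod $\tau_2$, and then arguing that the orbit of $w_0$ under unipotents cannot sit in two lines---is correct and conceptually clean, but heavier than necessary: you are using only elements $\equiv I\pmod{\tau_2}$, whereas the paper gets away with the single-parameter family $S_k$ with no $\tau_2$ factor. Either way the content is the same finiteness-of-eigenlines count; the paper just reaches it with less machinery.
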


\begin{proof}
Let $L,R \in \GL_2(\Z)$ be such that $E = L\cdot D\cdot R$, where $D$ is the Smith Normal form of $E$. Denote by 
\[J = \left( \begin{matrix}
0 & 1 \\
1 & 0 \end{matrix}\right) \]
the involution which permutes $e_1$ and $e_2$ and let $P = R^{-1}J$.  We claim that  the matrix $G = P^{-1}\cdot E\cdot P$ satisfies \eqref{preimages}. Indeed, $G$ can be written as 
\[
G =  J\cdot R\cdot L\cdot D\cdot R\cdot R^{-1}\cdot J = J\cdot R\cdot L\cdot D\cdot J = M\cdot D\cdot J,
\]
where $M = J\cdot R\cdot L\in\GL$. Hence
\[
G^{-1}(\ZZ) = J^{-1}D^{-1}(\ZZ),
\]
which is clearly in the form expressed in \eqref{preimages}.

\smallskip

If we further assume that $E\in \mathscr{H}_{NH}(\Tor)$, then it has at most two eigenvectors in $\ZZ$. In particular, there is some $k \in \Z$ such that $k\cdot e_1+e_2$ is not an eigenvector of $E$. Let 
\[S = \left(\begin{matrix}
1 & k \\
0 & 1
\end{matrix} \right),\]
and take $ P' = P\cdot S$. It follows that $e_2$ is not an eigenvector of 
\[
G' = {P'}^{-1}\cdot E \cdot P' = S^{-1}\cdot G\cdot S.
\]

Moreover, since $\tau_1$ divides $\tau_2$, $S$ acts as a permutation on $G^{-1}(\ZZ)$. Hence \eqref{preimages} holds with $G'$ in place of $G$.
\end{proof}

\section{Construction of non-uniformly hyperbolic endomorphisms}\label{sec:shears}

In this section we will show that $\mathcal U$ has elements in almost all the homotopy classes of endomorphisms.

\subsection{Endomorphisms} 
\label{sub:endomorphisms}

Fix $E\in \mathscr{H}_{NH}(\Tor)$ with elementary divisors $\tau_1, \tau_2$, and denote by $\mathrm{d}=\tau_1\cdot \tau_2$ its degree. We assume that $\tau_2\geq 5$. By making the linear change of coordinates indicated in Proposition \ref{SNF} we can assume that $\Pp E$ does not fix $[(0,1)]$ and $E^{-1}\mathbb Z^2=\frac 1{\tau_2}\mathbb Z\times\frac 1{\tau_1}\mathbb Z$.

Let $\delta<\frac 1{4\tau_2}$. Choose points $x_1,x_2,x_3,x_4\in\mathbb T^1$, in this order, such that
\begin{itemize}
\item $I_1=[x_1,x_2]$ and $I_3=[x_3,x_4]$ have size $2\delta$;
\item the translation of $I_1$ by a multiple of $\frac 1{\tau_2}$ does not intersect $I_3$;
\item $I_2=(x_2,x_3)$ and $I_4=(x_4,x_1)$ have size strictly larger than $\frac 1{\tau_2}  \left[\frac{\tau_2-1}2\right]\footnote{$[a]$ denotes the integer part of the real number $a$.}$.
\end{itemize}

\begin{remark}\label{rem:odd}
If $\tau_2$ is odd then the points can be $\{\frac 14-\delta,\frac 14+\delta, \frac 34-\delta,\frac 34+\delta\}$.\\
\end{remark}
\begin{definition}\hfill
\begin{enumerate}
	\item The critical region is $\mathcal{C}=(I_1\cup I_3)\times \mathbb T^1\subset \Tor$, and its complement $\mathcal G$ is the good region.
\item
	The good region is divided into $\mathcal G^+=I_4\times\mathbb T^1$ and $\mathcal G^-=I_2\times\mathbb T^1$.
\end{enumerate}	
\end{definition}

\begin{remark}
Observe that $\mathcal C$ is closed and $\mathcal G^+$ and $\mathcal G^-$ are open.
\end{remark}

\begin{lemma}\label{lem:preimagenesd}
	For every $x\in\Tor$, $E^{-1}(x)$ has $\mathrm{d}$ points. At least $\tau_1\left[\frac{\tau_2-1}2\right]$ pre-images are inside each one of $\mathcal G^+$ and $\mathcal G^-$, and at most $\tau_1$ of them are inside $\mathcal C$.

Furthermore, at least one pre-image of $x$ is inside $\mathcal G$ and $d(x,\mathcal C)>\frac 1{10}$.
\end{lemma}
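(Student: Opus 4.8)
The plan is to reduce the whole statement to a counting problem for an arithmetic progression on $\mathbb T^1$. By the normalization of Proposition~\ref{SNF}, for any $y$ with $E(y)=x$ we have $E^{-1}(x)=y+\pcov\!\left(\tfrac1{\tau_2}\mathbb Z\times\tfrac1{\tau_1}\mathbb Z\right)$, which is a coset of a finite subgroup of order $\tau_1\tau_2=\mathrm d$; this already gives the first assertion. Projecting to the first coordinate, the set $\Lambda\subset\mathbb T^1$ of first coordinates of the points of $E^{-1}(x)$ is a coset of $\tfrac1{\tau_2}\mathbb Z/\mathbb Z$ — that is, $\tau_2$ evenly spaced points with gap $\tfrac1{\tau_2}$ — and exactly $\tau_1$ preimages project onto each point of $\Lambda$. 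Hence the number of preimages in a vertical strip $A\times\mathbb T^1$ equals $\tau_1\cdot\#(A\cap\Lambda)$, so everything comes down to counting points of $\Lambda$ in the arcs $I_1,\dots,I_4$.

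For this I would record two elementary facts about the $\tfrac1{\tau_2}$-spaced progression $\Lambda$: (i) any half-open arc of length $\tfrac1{\tau_2}$ contains exactly one point of $\Lambda$, so an open arc of length $>\tfrac n{\tau_2}$ contains at least $n$ points of $\Lambda$; and (ii) any arc of diameter $<\tfrac1{\tau_2}$ contains at most one point of $\Lambda$. Since $I_2$ and $I_4$ have length $>\tfrac1{\tau_2}\left[\tfrac{\tau_2-1}2\right]$, fact (i) yields at least $\tau_1\left[\tfrac{\tau_2-1}2\right]$ preimages in each of $\mathcal G^-=I_2\times\mathbb T^1$ and $\mathcal G^+=I_4\times\mathbb T^1$. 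Since $I_1$ and $I_3$ have diameter $2\delta<\tfrac1{2\tau_2}$, fact (ii) gives at most one point of $\Lambda$ in each; moreover a point of $\Lambda$ in $I_1$ together with one in $I_3$ would differ by a multiple of $\tfrac1{\tau_2}$, so some translate of $I_1$ by a multiple of $\tfrac1{\tau_2}$ would meet $I_3$, against the construction. Thus $I_1\cup I_3$ carries at most one point of $\Lambda$, so $\mathcal C$ carries at most $\tau_1$ preimages, and since $\tau_2\ge5$ at least $\tau_1(\tau_2-1)\ge4$ preimages lie in $\mathcal G$.

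For the last claim I would argue as follows. From $|I_2|+|I_4|=1-4\delta>1-\tfrac1{\tau_2}$, one of these two arcs — call it $J$ — satisfies $|J|>\tfrac12-\tfrac1{2\tau_2}$, and its two flanking arcs are exactly $I_1$ and $I_3$. Let $J'\subset J$ be the open arc of points at distance $>\tfrac1{10}$ from $\partial J$, so $|J'|=|J|-\tfrac15$. The hypothesis $\tau_2\ge5$ gives precisely $\tfrac12-\tfrac1{2\tau_2}\ge\tfrac15+\tfrac1{\tau_2}$, hence $|J'|>\tfrac1{\tau_2}$, and by (i) there is a point $y^{*}\in J'\cap\Lambda$. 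Any preimage $y$ of $x$ with first coordinate $y^{*}$ then lies in $\mathcal G$, because $y^{*}\notin I_1\cup I_3$; and since $\mathbb T^1\setminus(I_1\cup I_3)=I_2\cup I_4$, the point of $I_1\cup I_3$ closest to $y^{*}$ is one of the two endpoints of $J$, which by construction of $J'$ lies at distance $>\tfrac1{10}$ from $y^{*}$. Therefore $d(y,\mathcal C)=d_{\mathbb T^1}(y^{*},I_1\cup I_3)>\tfrac1{10}$, completing the proof.

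The argument is essentially bookkeeping. The one point requiring care is that the constant $\tfrac1{10}$ is tight precisely at $\tau_2=5$: one must check that the larger of $I_2,I_4$ genuinely has length $\ge\tfrac25$, that the shrunken arc $J'$ is \emph{strictly} longer than $\tfrac1{\tau_2}$ so that fact (i) does apply, and that the open/half-open/closed distinctions are handled correctly in the point counts.
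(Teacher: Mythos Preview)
Your proof is correct and follows essentially the same approach as the paper, only with considerably more detail: the paper's proof is a two-line sketch invoking Proposition~\ref{SNF} and the construction of the regions, and for the last claim it centers a band of width $\tfrac1{\tau_2}$ in the larger good interval and computes the distance to the boundary as $\tfrac{\tau_2-3}{4\tau_2}\ge\tfrac1{10}$, which is equivalent to your shrinking argument. Your explicit handling of the open/closed counting, the non-intersection condition for $I_1$ and $I_3$, and the tightness at $\tau_2=5$ are all done carefully and correctly.
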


\begin{proof}
    The first part follows from Proposition \ref{SNF} and the construction of the regions.  The last statement is consequence of the fact that at least one good region has size greater than $\frac{1-4\delta}2>\frac {\tau_2-1}{2\tau_2}$, and if we take a band of size $\frac 1{\tau_2}$ in the middle of this good region, at least one pre-image has to fall in the middle band. Then the distance to the boundary is greater than $\frac 12\left(\frac{\tau_2-1}{2\tau_2}-\frac 1{\tau_2}\right)=\frac{\tau_2-3}{4\tau_2}\geq \frac 1{10}$ because $\tau_2\geq 5$.
\end{proof}

\subsection{Shears}
\label{sub:shears}

We start studying the dynamics of a family of shears that we will use to deform the original endomorphism. Let $s:\mathbb T^1\rightarrow\mathbb R$. Consider the family of conservative diffeomorphisms of the torus
\begin{equation}
	h_t(x_1,x_2)=(x_1, x_2+ts(x_1)).
\end{equation}

\begin{remark}\label{rem:standard}
If $U:\Tor\to\Tor$ is the Dehn twist given by $U(x_1,x_2)=(x_1+x_2,x_1)$, and $s(x_1)=\sin(2\pi x_1)$, then $\{s_t=U\circ h_t\}_{t>0}$ is the famous \emph{standard family} \cite{Chirikov2008} and has the formula $s_t(x_1,x_2)=(x_1+x_2+t\sin(2\pi x_1), x_2+t\sin(2\pi x_1))$.
\end{remark}

We observe that
\begin{equation}
	D_{(x_1,x_2)}h_t=\begin{pmatrix}
	1 & 0\\
	ts'(x_1) & 1
	\end{pmatrix}.
\end{equation}

As we mentioned before, $C_{\chi}(f)$ is independent of the choice of the norm on the tangent bundle of $\TT$. In order to simplify the computations, we will choose the maximum norm:
$$
\|(v_1,v_2)\|=\max\{|v_1|,|v_2|\}.
$$

All the computations in this section are performed using this norm. We will try to keep track of the constants involved and this will make the exposition a bit more technical, however it will help us give some concrete examples in subsection \ref{sub:concrete_examples}.

A first observation is the following.

\begin{lemma}
\[
\left\|\begin{pmatrix}
1 & 0\\
a & 1\\
\end{pmatrix}\right\|=\frac 1{\conorm{\begin{pmatrix}
1 & 0\\
a & 1\\
\end{pmatrix}}}=a+1
\]
\end{lemma}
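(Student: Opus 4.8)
The statement is a direct computation with the maximum norm $\|(v_1,v_2)\|=\max\{|v_1|,|v_2|\}$, which I would carry out in three short steps. First I would normalize: it suffices to treat the case $a\ge 0$ (reading $a+1$ as $|a|+1$ in general), because conjugating $M:=\begin{pmatrix}1&0\\a&1\end{pmatrix}$ by $\mathrm{diag}(1,-1)$ turns it into $\begin{pmatrix}1&0\\-a&1\end{pmatrix}$, and $\mathrm{diag}(1,-1)$ is an isometry of the maximum norm, hence preserves both the operator norm and the conorm.

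Next I would compute the operator norm. For a unit vector $v=(v_1,v_2)$ — so $|v_1|\le1$ and $|v_2|\le1$ — we have $Mv=(v_1,\,av_1+v_2)$, hence
\[
\|Mv\|=\max\{|v_1|,\,|av_1+v_2|\}\le\max\{1,\,a|v_1|+|v_2|\}\le a+1,
\]
and the vector $v=(1,1)$ is sent to $(1,a+1)$, of norm $a+1$, so the bound is attained and $\|M\|=a+1$.

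Finally, for the conorm I would use that $\det M=1$, so $M$ is invertible; substituting $w=Mv$ in the definition of $\conorm{\cdot}$ (and using homogeneity to pass from unit vectors to all nonzero vectors) gives the elementary identity $\conorm{M}=\|M^{-1}\|^{-1}$. Since $M^{-1}=\begin{pmatrix}1&0\\-a&1\end{pmatrix}$ has the same shape as $M$ with $a$ replaced by $-a$, the operator-norm step together with the normalization yields $\|M^{-1}\|=a+1$, whence $1/\conorm{M}=a+1$, closing the chain of equalities.

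I do not anticipate any genuine obstacle; the only points requiring a little care are fixing the sign convention for $a$ and exhibiting the explicit extremal vector $(1,1)$ that realizes the supremum defining $\|M\|$ (and, correspondingly, the minimizing vector for the conorm).
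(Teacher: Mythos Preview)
Your argument is correct. In the paper the lemma is stated as ``a first observation'' and given no proof at all, so there is nothing to compare against: your computation simply fills in the omitted details. The normalization via conjugation by $\mathrm{diag}(1,-1)$, the explicit extremizer $v=(1,1)$ for $\|M\|$, and the identity $\conorm{M}=\|M^{-1}\|^{-1}$ are exactly the right ingredients, and each step checks out. Note that in context the paper immediately afterwards fixes $0<a<b$, so the implicit assumption $a\ge 0$ you flag is harmless there; your remark that the general formula should read $|a|+1$ is nonetheless accurate.
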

\color{black}

Let $0<a<b$. There exists an analytic map $s$ which satisfies the following conditions: 
\begin{enumerate}
\item
If $u\in I_4$ then $a<s'(u)<b$;
\item
If $u\in I_2$ then $-b<s'(u)<-a$;
\item
If $u\in I_1\cup I_3$ then $|s'(u)|<b$.
\end{enumerate}

\begin{remark}
If $\tau_2$ is odd, in view of the Remark \ref{rem:odd} we can take $s(u)=\sin(2\pi u)$, with $a=2\pi\sin(2\pi\delta)-\epsilon$ and $b=2\pi+\epsilon$, for any $\epsilon>0$.
\end{remark} 

As a consequence we get, for every $x\in \Tor$
\begin{align}\label{eq:normaht}
	 \|D_xh_t\|=\frac 1{\conorm{D_xh_t}}<bt+1
\end{align}

 \begin{definition}
Given any $\alpha>0$, the corresponding horizontal cone is $\Deh=\{(u_1,u_2)\in\Real^2: |u_2|\leq \alpha|u_1|\}$, while the corresponding vertical cone is its complement $\Dev=\Real^2\setminus\Deh$.
\end{definition}

Since $E$ does not fix the vertical vector $(0,1)$, there exists $\alpha>1$ such that $E^{-1}\Dev\subset\overline{E^{-1}\Dev}\subset Int(\Deh)\subset\Deh$, and from now on we fix such an $\alpha$. Observe that $\Dev$ is open and $\Deh$ is closed.

\begin{remark}\label{rem:twist}
For a Dehn twist (eventually multiplied with a homothety) we have $\alpha=2+\epsilon$.
\end{remark}

\begin{lemma}\label{lem:estimativasht}
For every $t>\frac{2\alpha}a$ and for every unit vector $v\in T_x\Tor$ the following holds.
\begin{enumerate}
		\item If $v\in \Deh$ and
		\begin{enumerate}
			\item $x\in\mathcal G$, then
			\begin{itemize}
				\item \((D_xh_t)^{-1}v\in\Dev\) ($D_xh_t^{-1}\Deh\subset\Dev$)
				\item \(\norm{(D_xh_t)^{-1}v}> \frac{at-\alpha}{\alpha}=t\frac{a-\frac{\alpha}t}{\alpha};\)
			\end{itemize}
			\item $x\in\mathcal C$, then
			\begin{itemize}
				\item \(\norm{(D_xh_t)^{-1}v}> \frac{1}{\alpha}\).	
			\end{itemize}
		\end{enumerate}
		\item If $v=\pm(v_1,1)\in \Dev$ then 
		\begin{enumerate}
			\item either for every $x\in \mathcal G^+$ (if $v_1\leq 0$) or for every $x\in \mathcal G^-$ (if $v_1\geq 0$) it holds:
			\begin{itemize}
				\item \((D_xh_t)^{-1}v\in\Delta_{\alpha}^v\),
				\item \(\norm{(D_xh_t)^{-1}v}> 1;\)
			\end{itemize}
			\item for all the other $x$, we have 
			\begin{itemize}
				\item \(\norm{(D_xh_t)^{-1}v}> \frac{1}{bt+1}\).	
			\end{itemize}
		\end{enumerate}			
\end{enumerate}
\end{lemma}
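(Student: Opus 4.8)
The plan is to work directly with the matrix representation $D_xh_t = \bigl(\begin{smallmatrix}1&0\\ts'(x_1)&1\end{smallmatrix}\bigr)$, so that $(D_xh_t)^{-1} = \bigl(\begin{smallmatrix}1&0\\-ts'(x_1)&1\end{smallmatrix}\bigr)$, and to track what this lower-triangular map does to a unit vector in the maximum norm. Write $c = ts'(x_1)$; then $(D_xh_t)^{-1}(v_1,v_2) = (v_1, v_2 - c v_1)$. The key qualitative fact is that this map fixes the first coordinate and only alters the second, so it pushes vectors away from the horizontal and towards the vertical \emph{provided} the shear coefficient $c$ is large in absolute value compared to the cone slope $\alpha$; the hypothesis $t > 2\alpha/a$ together with the bounds on $s'$ from the preceding paragraph is exactly what makes $|c|$ large on the good region.

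For item (1), take $v = (v_1,v_2) \in \Deh$ a unit vector, so $|v_1| = 1$ (since $|v_2|\le\alpha|v_1|$ forces $\|v\| = |v_1|$) and $|v_2| \le \alpha$. Then the image is $(v_1, v_2 - cv_1)$ with first coordinate of modulus $1$. In case (a), $x\in\mathcal G$ means $x_1\in I_2\cup I_4$, so $|s'(x_1)| > a$ by conditions (1)--(2) on $s$, hence $|c| = t|s'(x_1)| > at$. The second coordinate has modulus $|v_2 - cv_1| \ge |c| - |v_2| > at - \alpha$. Since $t > 2\alpha/a$ gives $at - \alpha > at/2 > \alpha \ge \alpha|v_1|$, the image lies in $\Dev$; and its norm is $\max\{1, |v_2-cv_1|\} = |v_2-cv_1| > at-\alpha = (at-\alpha)/1 \ge (at-\alpha)/\alpha$, which I would sharpen to the stated $\frac{at-\alpha}{\alpha}$ by noting $\alpha>1$. (I should double-check which of $at-\alpha$ and $\frac{at-\alpha}{\alpha}$ the authors actually want; since $\alpha>1$ the weaker bound $\frac{at-\alpha}{\alpha}$ is the safe target.) In case (b), $x\in\mathcal C$ gives no lower bound on $|c|$, but trivially $\|(D_xh_t)^{-1}v\| \ge |v_1| = 1 \ge \frac1\alpha$ since $\alpha>1$.

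For item (2), take $v = \pm(v_1,1) \in \Dev$, so now $\|v\| = 1$ forces $|v_1| \le 1$ and the vertical coordinate is $\pm1$. The image is $\pm(v_1, 1 - cv_1)$. The sign analysis is the crux: in case (a) the hypothesis is that $x\in\mathcal G^+$ with $v_1\le 0$ (so $x_1\in I_4$, where $s'(x_1) > a > 0$, hence $c>0$ and $-cv_1 \ge 0$), or $x\in\mathcal G^-$ with $v_1\ge 0$ (so $x_1\in I_2$, where $s'(x_1) < -a <0$, hence $c<0$ and again $-cv_1 \ge 0$); in both sub-cases $1 - cv_1 \ge 1$, so the second coordinate has modulus $\ge 1 \ge |v_1|$, giving $(D_xh_t)^{-1}v\in\Dev$ and norm $> 1$ (strict because $|cv_1|>0$ unless $v_1=0$, and when $v_1=0$ one still has norm $=1$, so I would state $\ge 1$ or recheck the intended strictness). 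In case (b) — all remaining $x$, where the sign of $-cv_1$ may be unfavourable — one only controls the magnitude: $|1 - cv_1| \ge$ could be small, but the \emph{first} coordinate still has modulus $|v_1|$, which need not be bounded below, so here the honest bound comes from $\|(D_xh_t)^{-1}v\| \ge |v_1|$ being useless and instead one uses that $\|(D_xh_t)^{-1}v\| \ge \|v\|/\|D_xh_t\| > \frac{1}{bt+1}$ by \eqref{eq:normaht}, i.e. the general estimate $\conorm{(D_xh_t)^{-1}} = 1/\|D_xh_t\|$.

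The main obstacle is bookkeeping the sign conditions in item (2): matching "$v_1 \le 0$ on $\mathcal G^+$" with "$s' > 0$ on $I_4$" (and the mirror statement on $\mathcal G^-$) so that the cross term $-cv_1$ is nonnegative, and being careful about strict versus non-strict inequalities at the boundary case $v_1 = 0$. Everything else is a direct computation with $2\times 2$ lower-triangular matrices in the sup norm, using only $\alpha > 1$, the bounds on $s'$ from conditions (1)--(3), the threshold $t > 2\alpha/a$, and \eqref{eq:normaht}.
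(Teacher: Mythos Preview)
Your overall approach---compute directly with $(D_xh_t)^{-1}(v_1,v_2)=(v_1,v_2-cv_1)$ where $c=ts'(x_1)$, and track cone membership and max-norm---is exactly what the paper does, and items 2(a), 2(b) are handled correctly (including your honest flag on the strictness at $v_1=0$).

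There is, however, a genuine error in your treatment of item 1. You assert that a unit vector $v\in\Deh$ in the max norm must satisfy $|v_1|=1$, reasoning that $|v_2|\le\alpha|v_1|$ forces $\|v\|=|v_1|$. This is false because $\alpha>1$: the vector $(1/\alpha,1)$ lies on the boundary of $\Deh$ and has max-norm $1$, yet $|v_1|=1/\alpha$. What the cone condition actually gives is $1=\max\{|v_1|,|v_2|\}\le\max\{|v_1|,\alpha|v_1|\}=\alpha|v_1|$, so only $|v_1|\ge 1/\alpha$. With this correction the computation goes through: in 1(a) one has
\[
|v_2-cv_1|\ge |c||v_1|-|v_2|\ge (|c|-\alpha)|v_1|\ge (at-\alpha)\cdot\tfrac1\alpha,
\]
which is precisely the stated bound $\frac{at-\alpha}{\alpha}$ (so the $\alpha$ in the denominator is not a gratuitous weakening---it is forced by the extremal vector $(1/\alpha,1)$); and in 1(b) one gets $\|(D_xh_t)^{-1}v\|\ge|v_1|\ge 1/\alpha$, not $\ge 1$ as you wrote. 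The paper's proof makes exactly this point implicitly by evaluating at the (non-unit) boundary vector $(1,\alpha)$ and then dividing by $\|(1,\alpha)\|=\alpha$ to normalize.
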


\begin{proof}
	For the boundary vectors $v=(1,\alpha), w=(1,-\alpha)$ we compute
	\begin{align*}
		(D_xh_t)^{-1}v=(1,-ts'(x_1) +\alpha)\quad  (D_xh_t)^{-1}w=(1,-ts'(x_1)( -\alpha).
	\end{align*}
	It follows that if $x\in\mathcal{G}$ then
	\[
	|-ts'(x_1) +\alpha|\geq ta-\alpha>\alpha
	\]
	if $t>\frac{2\alpha}a$. Likewise for $w$, which shows that $(D_xh_t)^{-1}\Deh\subset \Dev$.
	
	We also remark that the minimal expansion of $D_xh_t^{-1}$ on $\Deh$ for $x\in\mathcal G$ is realized on $v$ or on $w$. Then for every $v\in \Deh$ and for every $x\in \mathcal{G}$ we have
	\[
	\norm{(D_xh_t)^{-1}v}> \frac{\|(1,ta-\alpha)\|}{\|(1,\alpha)\|}\geq\frac{ta-\alpha}{\alpha}.
	\]   
	Part (b) is similar and left as an exercise.
	
	The last part follows by noting that $s'$ has constant sign in $\mathcal{G}^+, \mathcal{G}^-$, and
	\[
	\conorm{(D_xh_t)^{-1}}=\frac 1{\|D_xh_t\|}> \frac{1}{bt+1}, \forall x\in \Tor.
	\]  
\end{proof}

\subsection{Endomorphisms and shears} 
\label{sub:endomorphismsandshears}

We now consider the analytic maps
\[
f_t=E\circ h_t;	
\]
clearly $f=f_t$ is an area preserving endomorphism isotopic to $E$.

The regions $\mathcal C$, $\mathcal G^+$ and $\mathcal G^-$ are invariant under $h_t$, and furthermore $\mathcal C$ is closed while $\mathcal G^+$ and $\mathcal G^-$ are open. A direct consequence of Lemma \ref{lem:preimagenesd} is the following:

\begin{lemma}\label{lem:preimagenesdd}

For every $x\in\Tor$, $f^{-1}(x)$ has $\mathtt{d}$ points. At least $\tau_1[\frac{(\tau_2-1)}2]$ pre-images are inside each one of $\mathcal G^+$ and $\mathcal G^-$, and at most $\tau_1$ of them are inside $\mathcal C$.

Furthermore, at least one pre-image of $x$ is inside $\mathcal G$ and $d(x,\mathcal C)>\frac 1{10}$.
\end{lemma}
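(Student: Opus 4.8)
The plan is to deduce the lemma directly from Lemma~\ref{lem:preimagenesd}, using that the shear $h_t$ fixes every vertical circle $\{x_1\}\times\mathbb T^1$ and therefore permutes the three vertical strips $\mathcal C$, $\mathcal G^+$, $\mathcal G^-$ among themselves without ever mixing them. Concretely, $h_t$ is a diffeomorphism of $\Tor$ with inverse $h_t^{-1}(x_1,x_2)=(x_1,x_2-ts(x_1))$, and both $h_t$ and $h_t^{-1}$ leave the first coordinate unchanged. Since the regions $\mathcal C=(I_1\cup I_3)\times\mathbb T^1$, $\mathcal G^+=I_4\times\mathbb T^1$, $\mathcal G^-=I_2\times\mathbb T^1$ constrain only the first coordinate, this immediately gives $h_t^{-1}(\mathcal R)=\mathcal R$ for $\mathcal R\in\{\mathcal C,\mathcal G^+,\mathcal G^-\}$.

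Next I would use $f=E\circ h_t$ to write $f^{-1}(x)=h_t^{-1}\big(E^{-1}(x)\big)$ for every $x\in\Tor$. Because $h_t^{-1}$ is a bijection this already yields $\#f^{-1}(x)=\#E^{-1}(x)=\mathtt d$; and for each region $\mathcal R$, the map $h_t^{-1}$ restricts to a bijection from $E^{-1}(x)\cap\mathcal R$ onto $f^{-1}(x)\cap\mathcal R$, so the number of $f$-preimages of $x$ lying in $\mathcal G^+$, in $\mathcal G^-$, and in $\mathcal C$ coincides with the corresponding number of $E$-preimages. The bounds $\geq\tau_1\big[\tfrac{\tau_2-1}{2}\big]$ in each good strip and $\leq\tau_1$ in $\mathcal C$ then follow verbatim from Lemma~\ref{lem:preimagenesd}.

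For the last assertion, Lemma~\ref{lem:preimagenesd} supplies a point $z\in E^{-1}(x)\cap\mathcal G$ lying in the middle band of one of the two good strips, with distance to $\mathcal C$ larger than $\tfrac1{10}$. Setting $y=h_t^{-1}(z)$ gives $f(y)=E(h_t(y))=E(z)=x$ and $y\in\mathcal G$ (the strips are $h_t^{-1}$-invariant), and since $h_t^{-1}$ does not move the first coordinate while $\mathcal C$ is a union of vertical strips, $d(y,\mathcal C)=d(z,\mathcal C)>\tfrac1{10}$ for the max norm on $T\Tor$. There is essentially no obstacle here; the only point worth a word of care is this last identity of distances, where one invokes that, for the chosen norm, the distance of a point to the vertical set $\mathcal C$ is a function of the horizontal coordinate alone (which $h_t$ fixes) — and even for a different norm one would only pick up a harmless bounded equivalence factor, the whole section being written with the max norm in any case.
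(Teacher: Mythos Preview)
Your proof is correct and follows exactly the paper's approach: the paper simply notes that the vertical strips $\mathcal C$, $\mathcal G^+$, $\mathcal G^-$ are $h_t$-invariant and then declares the lemma a direct consequence of Lemma~\ref{lem:preimagenesd}. You have merely spelled out the bijection $f^{-1}(x)=h_t^{-1}(E^{-1}(x))$ and the fact that distances to $\mathcal C$ depend only on the first coordinate, which is precisely what is implicit in the paper's one-line justification.
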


Let $e_v=\inf_{v\in\Dev,\ \|v\|=1}\|E^{-1}v\|$, $e_h=\inf_{v\in\Deh,\ \|v\|=1}\|E^{-1}v\|$. From Lemma \ref{lem:estimativasht} we get:

\begin{lemma}\label{lem:estimativasst} 
For $t>\frac{2\alpha}a$ it holds
\begin{enumerate}
	\item $x\in \mathcal G$ then $(D_xf)^{-1}\Dev\subset\overline{(D_xf)^{-1}\Dev}\subset\Dev$ (it is strictly invariant).
		\item if $v\in \Dev$ is a unit vector, then
	\begin{align*}
	\|(D_xf)^{-1}v\|>\begin{dcases}
	\frac{e_v(a-\frac{\alpha} t)}{\alpha} t & x\in \mathcal{G}\\
	\frac {e_v}{\alpha} & x\in \mathcal{C}
	\end{dcases}
	\end{align*}
	\item  If $v\in\Deh$, and $E^{-1}v=(v_1,v_2)$, let $*(v)$ be the sign of $-\frac {v_1}{v_2}$ (0 and $\pm\infty$ have both + and - sign). Then for all $x\in \mathcal G^{*(v)}$ we have $(D_xf)^{-1}(v)\in\Dev$.
	\item If $v\in\Deh$ is a unit vector, then
	\begin{align*}
	\|(D_xf)^{-1}v\|>\begin{dcases}
	e_h & x\in \mathcal G^{*(v)}\\
	\frac{e_h}{b+\frac 1t}t^{-1} & x\notin \mathcal G^{*(v)}
	\end{dcases}
	\end{align*}
\end{enumerate}
\end{lemma}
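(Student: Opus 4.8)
The plan is to reduce all four items to Lemma~\ref{lem:estimativasht} by factoring the inverse differential. Since $E$ acts linearly on $\RR$, the chain rule gives $D_xf=D_{h_t(x)}E\circ D_xh_t=E\circ D_xh_t$, hence
\[
(D_xf)^{-1}v=(D_xh_t)^{-1}(E^{-1}v)
\]
for every vector $v$. Thus each estimate is obtained by first applying the fixed invertible linear map $E^{-1}$, whose action on the cones is governed by the choice of $\alpha$ (so that $\overline{E^{-1}\Dev}\subset Int(\Deh)$) and whose quantitative effect is measured by the positive constants $e_v, e_h$, and then applying $(D_xh_t)^{-1}$, to which Lemma~\ref{lem:estimativasht} already applies.

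For item (1): since $(D_xh_t)^{-1}$ is a homeomorphism, $\overline{(D_xf)^{-1}\Dev}=(D_xh_t)^{-1}(\overline{E^{-1}\Dev})\subset(D_xh_t)^{-1}(Int(\Deh))\subset(D_xh_t)^{-1}(\Deh)$, and for $x\in\mathcal G$ the last set lies in $\Dev$ by Lemma~\ref{lem:estimativasht}(1a); this is the claimed strict invariance. For item (2): if $v\in\Dev$ is a unit vector then $E^{-1}v\in\Deh$ (by the choice of $\alpha$) and $\|E^{-1}v\|\ge e_v$, so, writing $u=E^{-1}v/\|E^{-1}v\|\in\Deh$ and using linearity, $\|(D_xf)^{-1}v\|=\|E^{-1}v\|\cdot\|(D_xh_t)^{-1}u\|$. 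Applying Lemma~\ref{lem:estimativasht}(1a) for $x\in\mathcal G$ gives $\|(D_xf)^{-1}v\|>e_v\frac{at-\alpha}{\alpha}=\frac{e_v(a-\alpha/t)}{\alpha}\,t$, and Lemma~\ref{lem:estimativasht}(1b) for $x\in\mathcal C$ gives $\|(D_xf)^{-1}v\|>\frac{e_v}{\alpha}$.

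Items (3) and (4) rest on the dichotomy that, for $v\in\Deh$, the vector $E^{-1}v$ lies either in $\Deh$ or in $\Dev$, and in both cases $(D_xh_t)^{-1}$ sends it into $\Dev$ for the appropriate choice of $x$. If $E^{-1}v\in\Deh$, then Lemma~\ref{lem:estimativasht}(1a) applies for every $x\in\mathcal G\supseteq\mathcal G^{*(v)}$. If $E^{-1}v=(v_1,v_2)\in\Dev$, then, normalizing $E^{-1}v$ to the form $\pm(v_1/v_2,1)$, the sign condition ``first coordinate $\le 0$'' (resp.\ ``$\ge 0$'') of Lemma~\ref{lem:estimativasht}(2a) selects exactly $\mathcal G^{+}$ (resp.\ $\mathcal G^{-}$), which is $\mathcal G^{*(v)}$ by the definition of $*(v)$ as the sign of $-v_1/v_2$; the convention that $0$ and $\pm\infty$ carry both signs covers the degenerate cases where $E^{-1}v$ is purely vertical ($v_1=0$) or purely horizontal ($v_2=0$). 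In either branch $(D_xf)^{-1}v\in\Dev$ for all $x\in\mathcal G^{*(v)}$, which is item (3). For item (4), with $v\in\Deh$ a unit vector one has $\|E^{-1}v\|\ge e_h$; for $x\in\mathcal G^{*(v)}$ the relevant branch of Lemma~\ref{lem:estimativasht} expands the unit vector $E^{-1}v/\|E^{-1}v\|$ by a factor $>1$ (in the $\Deh$-branch because $t>\frac{2\alpha}{a}$ forces $\frac{at-\alpha}{\alpha}>1$, in the $\Dev$-branch directly from (2a)), so $\|(D_xf)^{-1}v\|>e_h$; and for $x\notin\mathcal G^{*(v)}$ one bounds crudely $\|(D_xf)^{-1}v\|\ge\|E^{-1}v\|\cdot\conorm{(D_xh_t)^{-1}}=\frac{\|E^{-1}v\|}{\|D_xh_t\|}>\frac{e_h}{bt+1}=\frac{e_h}{b+1/t}\,t^{-1}$, using \eqref{eq:normaht}.

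The only step that needs genuine care is the sign bookkeeping in item (3): one must check that the convention for $*(v)$ really does match, after normalization of $E^{-1}v$, the half of the good region singled out by Lemma~\ref{lem:estimativasht}(2a), and that the degenerate cases are consistently absorbed. Everything else is a mechanical composition of two estimates that are already in hand, so I anticipate no substantive obstacle there.
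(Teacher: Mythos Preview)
Your proposal is correct and is exactly the approach the paper intends: the paper's own proof consists of the single line ``From Lemma~\ref{lem:estimativasht} we get:'', and you have faithfully unpacked that, factoring $(D_xf)^{-1}=(D_xh_t)^{-1}\circ E^{-1}$, using $E^{-1}\Dev\subset\Deh$ together with $e_v,e_h$ for the linear part, and then invoking the appropriate cases of Lemma~\ref{lem:estimativasht} for the shear part. The sign bookkeeping in item~(3) is correct as you describe it.
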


\subsection{Non-uniform hyperbolicity} 
\label{sub:non_uniform_hyperbolicity}

In this part we will establish the remaining part of \hyperlink{theoremA}{Theorem A} and prove that for $t$ sufficiently large, the map $f=f_t$ is NUH. The arguments are inspired in the abstract methods used in \cite{extendedflex} (see also \cite{RandomProdSt}), but with the additional difficulty of having to work in the natural extension instead of directly in the manifold. We rely on Proposition \ref{pro:existeexpneg}.

\begin{proof}[Proof of Theorem A ($\mathcal U$ intersects the isotopy class of $E$)]

For any $(x,v)\in T\Tor, v\neq 0$ and $n\geq 0$ denote by $Df^{-n}(x,v)$ all the preimages of $(x,v)$ under $Df$:
$$Df^{-n}(x,v)=\{(y,w)\in T\TT:\ f^n(y)=x,\ D_yf^nw=v\}.$$
For any nonzero tangent vector $(x,v)$, define
\begin{align}
&\mathcal{G}_n=\{(z,w)\in Df^{-n}(x,v):\ w\in\Dev\}\\
&\mathcal{B}_n=Df^{-n}(x,v)\setminus\mathcal G_n\\
&g_n=\#\mathcal{G}_n\\
&b_n=\#\mathcal{B}_n=\mathtt d^{n}-g_n.
\end{align}
\color{black}

From lemmas \ref{lem:preimagenesdd},\ref{lem:estimativasst} one deduces:

\begin{lemma}\label{lem:good}
Let $(x,v)\in T\Tor$.
\begin{enumerate}
	\item If $v\in\Dev$  then at least $\tau_1(\tau_2-1)$ of its pre-images under $Df$ are also in $\Dev$.
	\item If $v\in\Deh$  then at least $\tau_1\left[\frac{\tau_2-1}2\right]$ of its pre-images under $Df$ are in $\Dev$.
\end{enumerate}
\end{lemma}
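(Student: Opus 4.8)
The plan is to unwind the definitions and reduce both parts of Lemma \ref{lem:good} to the counting statement in Lemma \ref{lem:preimagenesdd} together with the cone estimates in Lemma \ref{lem:estimativasst}. Recall that a preimage $(z,w) \in Df^{-1}(x,v)$ means $f(z) = x$ and $D_z f(w) = v$, equivalently $w = (D_z f)^{-1} v$ up to the $\mathrm{d}$-fold choice of $z \in f^{-1}(x)$. So the task is: given $v$ in a prescribed cone, count how many of the $\mathrm{d}$ preimages $z$ of $x$ produce $(D_z f)^{-1} v \in \Dev$.

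For part (1), suppose $v \in \Dev$. Lemma \ref{lem:estimativasst}(1) says that for every $z \in \mathcal{G}$ the cone $\Dev$ is strictly $(D_z f)^{-1}$-invariant, hence $(D_z f)^{-1} v \in \Dev$ for all such $z$. By Lemma \ref{lem:preimagenesdd}, at least $\tau_1\left[\frac{\tau_2-1}{2}\right]$ preimages lie in $\mathcal{G}^+$ and at least that many in $\mathcal{G}^-$, so at least $2\tau_1\left[\frac{\tau_2-1}{2}\right] \geq \tau_1(\tau_2 - 1)$ preimages lie in $\mathcal{G} = \mathcal{G}^+ \cup \mathcal{G}^-$ (using that $2\left[\frac{\tau_2-1}{2}\right] \geq \tau_2 - 1$ by the integer-part inequality, with equality when $\tau_2$ is odd). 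That gives the bound.

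For part (2), suppose $v \in \Deh$. Here the relevant mechanism is Lemma \ref{lem:estimativasst}(3): writing $E^{-1} v = (v_1, v_2)$ and letting $*(v)$ be the sign of $-v_1/v_2$, one has $(D_z f)^{-1} v \in \Dev$ for all $z \in \mathcal{G}^{*(v)}$. Since $*(v)$ selects a single one of the two good half-regions (or both, in the degenerate cases), Lemma \ref{lem:preimagenesdd} guarantees at least $\tau_1\left[\frac{\tau_2-1}{2}\right]$ preimages in $\mathcal{G}^{*(v)}$, which is exactly the claimed count. I would just note that in the degenerate cases ($v_1 = 0$ or $v_2 = 0$, i.e. $\pm\infty$) the conclusion only gets stronger since then preimages in \emph{either} good region work.

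I do not anticipate a genuine obstacle here; this lemma is a bookkeeping corollary assembling the strict cone-invariance (on all of $\mathcal{G}$ for vertical input vectors, and on the single correct half-region $\mathcal{G}^{*(v)}$ for horizontal input vectors) with the preimage-distribution count. The only mild point of care is the arithmetic with $\left[\frac{\tau_2-1}{2}\right]$ — making sure that "$2\tau_1\left[\frac{\tau_2-1}{2}\right] \geq \tau_1(\tau_2-1)$" is invoked correctly in part (1) while part (2) only needs one half-region's worth — and keeping straight that $v$ being in a \emph{closed} cone $\Deh$ versus the \emph{open} cone $\Dev$ does not affect the count, since the estimates in Lemma \ref{lem:estimativasst} are stated for all unit vectors in the respective cones. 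The hypothesis $t > 2\alpha/a$ is inherited from Lemma \ref{lem:estimativasst} and is assumed throughout.
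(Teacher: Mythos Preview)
Your approach is the same as the paper's (which simply says the lemma follows from Lemmas \ref{lem:preimagenesdd} and \ref{lem:estimativasst}), and part (2) is correct. However, there is an arithmetic slip in part (1): the inequality $2\left[\frac{\tau_2-1}{2}\right] \geq \tau_2-1$ is false when $\tau_2$ is even (for instance $\tau_2=6$ gives $2\cdot 2=4<5$). Summing the two half-region bounds from Lemma \ref{lem:preimagenesdd} therefore only yields $\tau_1(\tau_2-2)$ preimages in $\mathcal G$ in the even case, which is not enough.

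The fix is immediate and uses the other clause of Lemma \ref{lem:preimagenesdd}: at most $\tau_1$ of the $\mathrm d=\tau_1\tau_2$ preimages lie in $\mathcal C$, hence at least $\tau_1\tau_2-\tau_1=\tau_1(\tau_2-1)$ lie in $\mathcal G=\mathcal G^+\cup\mathcal G^-$. Combined with Lemma \ref{lem:estimativasst}(1) this gives the claim. Everything else in your write-up is fine.
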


By the lemma above one can compute 
\begin{align*}
g_{n+1}&\geq \tau_1(\tau_2-1)\cdot g_n+\tau_1\left[\frac{\tau_2-1}{2}\right]\cdot b_n= \tau_1(\tau_2-1)\cdot g_n\\
&+\tau_1\left[\frac{\tau_2-1}{2}\right]\cdot(\mathrm d^n-g_n)\\
&=\tau_1\left(\tau_2-1-\left[ \frac{\tau_2-1}{2}\right]\right)g_n+\tau_1\left[\frac{\tau_2-1}{2}\right]\cdot\mathrm d^n\\
&\Rightarrow \frac{g_{n+1}}{\mathrm d^{n+1}}\geq \frac 1\tau_2\left(\tau_2-1-\left[ \frac{\tau_2-1}{2}\right]\right)\frac{g_n}{d^n}+\frac 1{\tau_2}\left[\frac{\tau_2-1}{2}\right]
\end{align*}
Let us denote $a_n=\frac{g_n}{\mathrm d^{n}}$  and 
\begin{align*}
c&=\frac 1\tau_2\left(\tau_2-1-\left[ \frac{\tau_2-1}{2}\right]\right)\\
e&=\frac 1{\tau_2}\left[\frac{\tau_2-1}{2}\right]
\end{align*}
The inequality above becomes 
\[
	a_{n+1}\geq c\cdot a_n+e,
\]
Recalling that we are assuming $\tau_2\geq 5$ we get:

\begin{lemma}\label{lem:proporcion}
For every $(x,v)\in T\Tor, v\neq 0$ and $n\geq 0$ it holds
\[
	a_n\geq \frac{\left[\frac{\tau_2-1}2\right]}{1+\left[\frac{\tau_2-1}2\right]}(1-c^n)
\]
In particular, 
\[
\liminf_n a_n\geq \frac{\left[\frac{\tau_2-1}2\right]}{1+\left[\frac{\tau_2-1}2\right]}\geq\frac{2}{3},
\]
uniformly in $(x,v)$.
\end{lemma}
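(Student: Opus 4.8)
The plan is to treat $a_{n+1}\geq c\,a_n+e$ as an affine recursive inequality and solve it explicitly. First I would locate the fixed point $L=\frac{e}{1-c}$ of the affine map $t\mapsto ct+e$; subtracting $L$ from the recursion gives $a_{n+1}-L\geq c\,(a_n-L)$, and since $c>0$ multiplying by $c$ preserves the sense of the inequality, so this can be iterated to obtain $a_n-L\geq c^n(a_0-L)$ for every $n\geq 0$, with no hypothesis needed on the sign of $a_0-L$.

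Next I would note that $a_0\geq 0$: indeed $Df^{0}(x,v)=\{(x,v)\}$, so $g_0\in\{0,1\}$ and $a_0=g_0\geq 0$. Plugging this in yields $a_n\geq L-c^nL=L(1-c^n)$, a bound uniform in $(x,v)$ since the only information used about the pair was $a_0\geq 0$. A direct computation with $c=\frac1{\tau_2}\bigl(\tau_2-1-[\tfrac{\tau_2-1}{2}]\bigr)$ and $e=\frac1{\tau_2}[\tfrac{\tau_2-1}{2}]$ gives $1-c=\frac1{\tau_2}\bigl(1+[\tfrac{\tau_2-1}{2}]\bigr)$, hence $L=\dfrac{[\frac{\tau_2-1}{2}]}{1+[\frac{\tau_2-1}{2}]}$, which is exactly the claimed constant, so the first displayed inequality follows.

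For the $\liminf$ assertion I would check that $0<c<1$: from $0\leq[\tfrac{\tau_2-1}{2}]\leq\frac{\tau_2-1}{2}$ one gets $\frac{\tau_2-1}{2\tau_2}\leq c\leq\frac{\tau_2-1}{\tau_2}<1$, so $c^n\to 0$ and therefore $\liminf_n a_n\geq L$. Finally, since $\tau_2\geq 5$ forces $[\tfrac{\tau_2-1}{2}]\geq 2$, and $x\mapsto\frac{x}{1+x}$ is increasing on $[0,\infty)$, we conclude $L\geq\frac{2}{3}$, uniformly in $(x,v)$.

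I do not expect any genuine obstacle in this lemma; the only points requiring a little care are justifying that the inequality $a_{n}-L\geq c(a_{n-1}-L)$ may be iterated (which works precisely because $c\geq 0$), and recording that the relevant input for uniformity is merely $a_0\geq 0$ rather than an exact value of $a_0$.
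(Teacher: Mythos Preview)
Your proof is correct and follows essentially the same approach as the paper: both iterate the affine inequality $a_{n+1}\geq c\,a_n+e$ (the paper by unrolling it directly to $a_n\geq e\sum_{k=0}^{n-1}c^k+c^n a_0$, you by shifting by the fixed point $L=e/(1-c)$), drop the nonnegative term $c^n a_0$, and then identify $\frac{e}{1-c}$ with the claimed constant. Your additional verification that $0<c<1$ and $[\frac{\tau_2-1}{2}]\geq 2$ under $\tau_2\geq 5$ is also correct.
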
 

\begin{proof}
By induction we obtain
\[
a_{n}\geq e\sum_{k=0}^{n-1}c^k+c^{n}a_0
\]
and therefore 
\begin{align*}
a_n&\geq\frac {e(1-c^n)}{1-c}=\frac{\left[\frac{\tau_2-1}2\right]}{1+\left[\frac{\tau_2-1}2\right]}(1-c^n).
\end{align*}
\end{proof}

Recall the definition of $I(x,v;f)$ from \eqref{eq:promedio}. We have the following direct consequence of Lemma \ref{lem:estimativasst}.
\begin{corollary}\label{cor:estimativasst}
If $t\geq\frac{2\alpha}a$ then
\begin{enumerate}
	\item If $v\in\Dev$ then
	\[
	I(x,v;f)\geq \left(1-\frac{1}{\tau_2}\right)\log t+\log \frac{e_v}{\alpha} \left(a-\frac{\alpha}t\right)^{1-\frac 1{\tau_2}}
	\]
	\item If $v\in\Deh$ then
	\[
	I(x,v;f)\geq  -\left(1-\frac1{\tau_2}\left[\frac{\tau_2-1}2\right]\right)\log t+\log e_h\left(b+\frac 1t\right)^{-\left(1-\frac 1{\tau_2}\left[\frac{\tau_2-1}2\right]\right)}
	\]
\end{enumerate}
\end{corollary}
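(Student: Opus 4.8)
The plan is to unwind $I(x,v;f)$ directly from its definition \eqref{eq:I}, exploiting that for the maps $f=f_t=E\circ h_t$ the absolute Jacobian is constant. Indeed $h_t$ preserves area and $E$ is linear, so $|\det D_yf|=|\det E|=\tau_1\tau_2=\mathrm{d}$ for every $y$, whence
\[
I(x,v;f)=\frac1{\mathrm{d}}\sum_{y\in f^{-1}(x)}\log\norm{(D_yf)^{-1}v},
\]
an honest average over the $\mathrm{d}$ preimages of $x$. Consequently it is enough to bound each summand from below according to the region of $\Tor$ containing the corresponding preimage $y$ (this is precisely what Lemma \ref{lem:estimativasst} provides) and then to use Lemma \ref{lem:preimagenesdd} to count how many preimages of $x$ fall in the ``good'' region as opposed to the ``bad'' one.

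For the vertical cone case ($v\in\Dev$ a unit vector), Lemma \ref{lem:estimativasst}(2) gives $\norm{(D_yf)^{-1}v}>\frac{e_v}{\alpha}(at-\alpha)=\frac{e_v}{\alpha}\,t\,(a-\tfrac{\alpha}{t})$ when $y\in\mathcal G$ and $\norm{(D_yf)^{-1}v}>\frac{e_v}{\alpha}$ when $y\in\mathcal C$, while Lemma \ref{lem:preimagenesdd} guarantees that at least $\mathrm{d}-\tau_1=\tau_1(\tau_2-1)$ of the preimages lie in $\mathcal G$. Adding up the per-preimage logarithms, every one of the $\mathrm{d}$ terms contributes at least $\log\tfrac{e_v}{\alpha}$, and each of the ($\geq\tau_1(\tau_2-1)$) preimages in $\mathcal G$ contributes an additional $\log(at-\alpha)>0$; dividing by $\mathrm{d}$ and using $\tfrac{\tau_1(\tau_2-1)}{\mathrm{d}}=1-\tfrac1{\tau_2}$ and $\log(at-\alpha)=\log t+\log(a-\tfrac{\alpha}{t})$ yields exactly the claimed bound. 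The horizontal cone case ($v\in\Deh$ a unit vector) is treated identically via Lemma \ref{lem:estimativasst}(3)--(4): there are at least $\tau_1[\tfrac{\tau_2-1}{2}]$ preimages in $\mathcal G^{*(v)}$, on which $\norm{(D_yf)^{-1}v}>e_h$, and at most $\tau_1\big(\tau_2-[\tfrac{\tau_2-1}{2}]\big)$ preimages outside it, on which $\norm{(D_yf)^{-1}v}>\frac{e_h}{bt+1}$; writing $\log\frac{e_h}{bt+1}=\log e_h-\log(bt+1)$, summing, dividing by $\mathrm{d}$, and using $\tau_1(\tau_2-[\tfrac{\tau_2-1}{2}])/\mathrm{d}=1-\tfrac1{\tau_2}[\tfrac{\tau_2-1}{2}]$ and $\log(bt+1)=\log t+\log(b+\tfrac1t)$ gives the second bound.

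The argument is essentially bookkeeping, so I do not anticipate a real obstacle; the one point that needs attention is the direction of the inequality when the exact number of preimages in the good region is replaced by the lower bound of Lemma \ref{lem:preimagenesdd}. This is legitimate precisely because $at-\alpha>\alpha>1$ (from $t>\tfrac{2\alpha}{a}$) and $bt+1>1$, so that $\log(at-\alpha)$ and $\log(bt+1)$ are positive and having more preimages in $\mathcal G$ (respectively fewer outside $\mathcal G^{*(v)}$) can only help. One should also keep in mind that $a-\tfrac{\alpha}{t}>0$, again because $t>\tfrac{2\alpha}{a}$, so that the fractional power $(a-\tfrac{\alpha}{t})^{1-1/\tau_2}$ in the statement is well defined.
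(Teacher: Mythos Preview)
Your argument is correct and is exactly the computation the paper has in mind when it declares the corollary a ``direct consequence of Lemma \ref{lem:estimativasst}'': you average the per--preimage bounds of that lemma using the constant Jacobian $\mathrm d=\tau_1\tau_2$, and count good preimages with Lemma \ref{lem:preimagenesdd}. Your care about the sign of $\log(at-\alpha)$ and $\log(bt+1)$ (ensuring that replacing the actual count by the extremal one preserves the inequality) is the only nontrivial point, and you handle it properly.
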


Now we will use Lemma \ref{lem:combinacionconvexa}:
\[
I(x,v;f^n)=\sum_{i=0}^{n-1}\sum_{y\in f^{-i}(x)}\frac 1{\det(D_yf^i)}I(y,F_y^{-i}v;f):=\sum_{i=0}^{n-1}J_i,
\]
where
\[
J_i=\sum_{y\in f^{-i}(x)}\frac 1{\det(D_yf^i)}I(y,F_y^{-i}v;f).
\]

Since the determinant of $Df$ is constant and equal to $\mathtt d$ we obtain
\[
	J_i=\frac{1}{\mathtt d^i}\sum_{y\in f^{-i}(x)}I(y,F_y^{-i}v;f)=\frac{1}{\mathtt d^i}\sum_{(y,w)\in\mathcal{G}_i} I(y,w;f)+\frac{1}{\mathtt d^i}\sum_{(y,w)\in\mathcal{B}_i} I(y,w;f)
\]

 It follows, for $t\geq \frac{2\alpha}{a}$,
 \begin{align*}
\lim_{i\rightarrow\infty}J_i\geq&\frac{\left[\frac{\tau_2-1}2\right]}{1+\left[\frac{\tau_2-1}2\right]}\left(\left(1-\frac{1}{\tau_2}\right)\log t+\log \frac{e_v}{\alpha} \left(a-\frac{\alpha}t\right)^{1-\frac 1{\tau_2}}\right)+\\
&\frac 1{1+\left[\frac{\tau_2-1}2\right]}\Bigg( -\left(1-\frac1{\tau_2}\left[\frac{\tau_2-1}2\right]\right)\log t\\
&+\log e_h\left(b+\frac 1t\right)^{-\left(1-\frac 1{\tau_2}\left[\frac{\tau_2-1}2\right]\right)}\Bigg)=\frac{\left[\frac{\tau_2-1}2\right]-1}{\left[\frac{\tau_2-1}2\right]+1}\log t+C(t),
\end{align*}
Where
$$
C(t)=\log \left(\frac{e_v}{\alpha}\right)^{\frac{\left[\frac{\tau_2-1}2\right]}{1+\left[\frac{\tau_2-1}2\right]}}e_h^{\frac 1{1+\left[\frac{\tau_2-1}2\right]}}\left(a-\frac{\alpha}t \right)^ {\frac{(\tau_2-1)\left[\frac{\tau_2-1}2\right]}{\tau_2\left(1+\left[\frac{\tau_2-1}2\right]\right)}}\left(b+\frac 1t\right)^{\frac{-\tau_2+\left[\frac{\tau_2-1}2\right]}{\tau_2\left(1+\left[\frac{\tau_2-1}2\right]\right)}}
$$
is uniformly bounded from below by some constant $C$. Since $\tau_2(E)\geq 5$ it holds
$$
\frac{\left[\frac{\tau_2-1}2\right]-1}{\left[\frac{\tau_2-1}2\right]+1}\geq\frac 13>0.
$$
Since all the bounds above are uniform for all nonzero tangent vectors $(x,v)$, we obtain that for $t$ sufficiently large, for all $i$ greater than some $i_0$, and for all nonzero tangent vectors $(x,v)$ we have $J_i(x,v)>k>0$. This implies that there exists some $n_0$ such that
$$
\frac 1{n_0}I(x,v;f^{n_0})=\frac 1{n_0}\sum_{i=0}^{n_0-1}J_i(x,v)>\frac k2>0,
$$
for all nonzero tangent vectors $(x,v)$. This implies that $C_{\chi}(f)>0$, and Proposition \ref{pro:existeexpneg} implies that $f$ is NUH. This finishes the proof of \hyperlink{theorem A}{Theorem A}. \label{pag:finalprueba}

\end{proof}

\begin{remark}
Observe that the asymptotic bound on the Lyapunov exponent is
$$
\lambda_f^-(x)\leq\approx-\frac{\left[\frac{\tau_2-1}2\right]-1}{\left[\frac{\tau_2-1}2\right]+1}\log t,
$$
which approaches $-\log t$ when $\tau_2$ is large.
\end{remark}

\subsection{A concrete example} 
\label{sub:concrete_examples}

As mentioned in remark \ref{rem:hipotesisteoA} our methods provide effective bounds for the $\mathcal C^1$ distance between $f=f_t$ and $E$, which amounts to estimate $t$ in terms of $E$. This is direct from the arguments of the previous part. Of course, without any information about the linear map there is no possibility to obtain a concrete number, so here we present an example of this computation mostly to convince the reader that bounds obtained are manageable.

Let us start by considering the linear expanding map $E_k:\Tor\to\Tor$ induced by the matrix 
\[
	\begin{pmatrix}
	2k+1 & 2k+1\\
	0 & 2k+1
	\end{pmatrix},
\]
and the corresponding deformation $f_t=E_k\circ h_t$. Then $E_k$ has degree $(2k+1)^2$ and does not fix the vertical direction: in fact $E_k(0,1)=(2k+1)(1,1)$. \color{black}From this we see that we can take $\alpha=2$. We also have $e_v=\frac 1{2k+1}$ and $e_h=\frac 1{2(2k+1)}$

Note that $\tau_1=\tau_2=2k+1$, so $\left[\frac{\tau_2-1}2\right]=k$. Furthermore
\begin{align*}
&\frac{\left[\frac{\tau_2-1}2\right]-1}{\left[\frac{\tau_2-1}2\right]+1}=\frac{k-1}{k+1},\\
&\left(\frac{e_v}{\alpha}\right)^{\frac{\left[\frac{\tau_2-1}2\right]}{1+\left[\frac{\tau_2-1}2\right]}}e_h^{\frac 1{1+\left[\frac{\tau_2-1}2\right]}}=\frac 1{2(2k+1)}.
\end{align*}
For $\delta=\frac 1{4(2k+1)}$ we have $b=2\pi$ and $a=2\pi\sin\frac{\pi}{2(2k+1)}$.

Then 
\[
C(t)=\log\frac 1{2(2k+1)}\left(2\pi\sin\frac{\pi}{2(2k+1)}-\frac 2t\right)^{\frac{2k^2}{(k+1)(2k+1)}}\left(2\pi+\frac 1t\right))^{-\frac 1{2k+1}}.
\]
In order to obtain NUH we need
\[
t>\left(2(2k+1)\right)^{\frac{k+1}{k-1}}\left(2\pi\sin\frac{\pi}{2(2k+1)}-\frac 2t\right)^{-\frac{2k^2}{(k-1)(2k+1)}}\left(2\pi+\frac 1t\right)^{\frac {k+1}{(k-1)(2k+1).}}
\]

\begin{itemize}
	\item For $k=2$ (multiplication by $5$) we can take $t=1042$.
	\item For $k=3$ (multiplication by $7$) we can take $t=216$.
    \item For $k=5$ (multiplication by $11$) we can take $t=151$.
\end{itemize}

On the other hand, in this case $C_{\det}(f)=2\log (2k+1)$, so in order to have $f=f_t\in\mathcal U_1$ and obtain continuity of the exponents the condition on $t$ is much better. We need
\[
t>2^{\frac{k+1}{k-1}}\left(2\pi\sin\frac{\pi}{2(2k+1)}-\frac 2t\right)^{-\frac{2k^2}{(k-1)(2k+1)}}\left(2\pi+\frac 1t\right)^{\frac {k+1}{(k-1)(2k+1).}}
\]

\begin{itemize}
	\item For $k=2$ (multiplication by $5$) we can take $t=10.02$.
    \item For $k=3$ (multiplication by $7$) we can take $t=6.29$.
\end{itemize}

We definitely do not claim that this bounds are optimal, and we are sure that they can be considerably improved. Our estimates do not take into consideration a better description of the distribution inside the torus of preimages of higher order, and better bounds for the expansion of preimages of higher order of tangent vectors.

\smallskip

Observe the following interesting consequence: we can write
\[
	f_t=E_2\circ h_t= 5s_t
\]
where $s_t:\Tor\to\Tor$ is the standard family, and in particular $Ds_t$ defines a cocycle over $5s_t$. Denoting $S_t=(5s_t,Ds_t)$ it follows that the exponents of $f_t$ and $S_t$ are related by
\[
	\chi_{f_t}^{\pm}=\chi^{\pm}_{S_t}+\log 5
\] 

\begin{corollary}[``Expanding'' Standard Map]
For $t\geq 1043$ the cocycle $S_t$ is non-uniformly hyperbolic.
\end{corollary}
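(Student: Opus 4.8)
The plan is to read this off directly from the concrete construction of the previous subsections, since $f_t$ and the cocycle $S_t$ are tied together by an explicit scaling. First I would make explicit the identity behind the relation $\chi_{f_t}^{\pm}=\chi_{S_t}^{\pm}+\log 5$ recorded above the statement: writing $f_t=E_2\circ h_t=5\,s_t$ with matching lifts to $\R^2$, one has $D_xf_t=5\cdot D_xs_t$ for every $x\in\Tor$; since the cocycle $S_t=(5s_t,Ds_t)$ is driven by the base map $5s_t=f_t$ itself, the products along any $f_t$-orbit satisfy $D_xf_t^{\,n}=5^{\,n}\,S_t^{(n)}(x)$, whence $\frac1n\log\|D_xf_t^{\,n}v\|=\log 5+\frac1n\log\|S_t^{(n)}(x)v\|$ and, in the limit, $\chi_{f_t}(x,v)=\log 5+\chi_{S_t}(x,v)$ together with the corresponding identities for $\chi^{+}$ and $\chi^{-}$.

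Next I would use the quantitative output of the example computation for the family $E_k$ with $k=2$. There it is shown that the threshold condition on $t$ is met at $t=1042$; since the right-hand side of that constraint is decreasing in $t$ while the left-hand side is $t$, the condition persists for all $t\geq 1043$, so for such $t$ we have $f_t\in\mathcal U$, i.e. $C_{\chi}(f_t)>0$. Moreover $\det Df_t\equiv 25$, hence $C_{\det}(f_t)=2\log 5$. Feeding this into Proposition \ref{pro:existeexpneg} applied to $f=f_t$ gives, for $\mu$-almost every $x\in\Tor$,
\[
\chi^-_{f_t}(x)\leq -C_{\chi}(f_t)<0,\qquad \chi^+_{f_t}(x)\geq C_{\chi}(f_t)+C_{\det}(f_t)>2\log 5 .
\]

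Finally I would subtract $\log 5$: the first inequality becomes $\chi^-_{S_t}(x)<-\log 5<0$ and the second becomes $\chi^+_{S_t}(x)>\log 5>0$ for $\mu$-a.e. $x$, which is precisely non-uniform hyperbolicity of $S_t$. I do not expect a genuine obstacle here; the only two points that need a sentence of care are the monotonicity in $t$ used to upgrade the single verified value $t=1042$ to the whole range $t\geq 1043$, and keeping straight that it is the combination $C_{\chi}+C_{\det}$ --- not $C_{\chi}$ alone --- that dominates the positive exponent, so that $\chi^{+}_{S_t}$ remains strictly positive after removing $\log 5$.
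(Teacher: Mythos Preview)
Your proposal is correct and follows exactly the approach the paper intends: the corollary is stated in the paper without a separate proof, as an immediate consequence of the displayed relation $\chi_{f_t}^{\pm}=\chi_{S_t}^{\pm}+\log 5$ together with the concrete computation showing $f_t\in\mathcal U$ for $k=2$ and $t=1042$. You have simply spelled out the details the paper leaves implicit, including the monotonicity in $t$ and the fact that $C_{\det}(f_t)=2\log 5$ guarantees the positive exponent survives the subtraction of $\log 5$.
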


Proving the existence of parameters for which $s_t$ is non-uniformly hyperbolic is one of the most important problems in smooth ergodic theory. Some related results (for skew-products) are: \cite{NUHD}, \cite{LyaRandom}.

\begin{figure}[H]
	\subfloat[]{
		\includegraphics[width=0.48\textwidth]{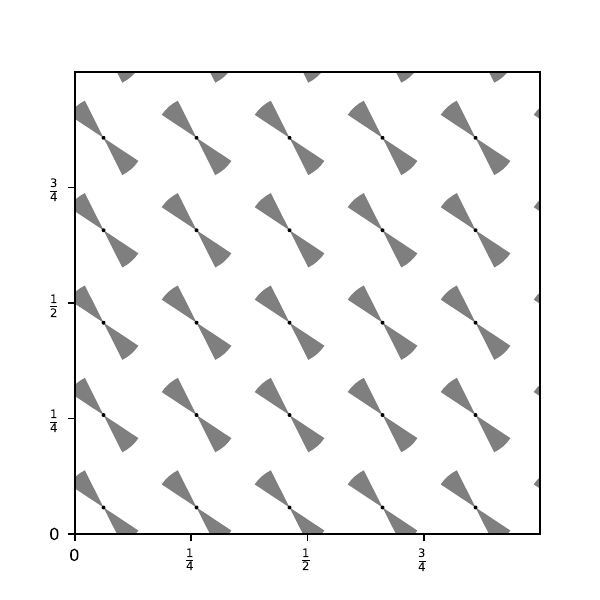}}
		\qquad
	\subfloat[]{
		\includegraphics[width=0.48\textwidth]{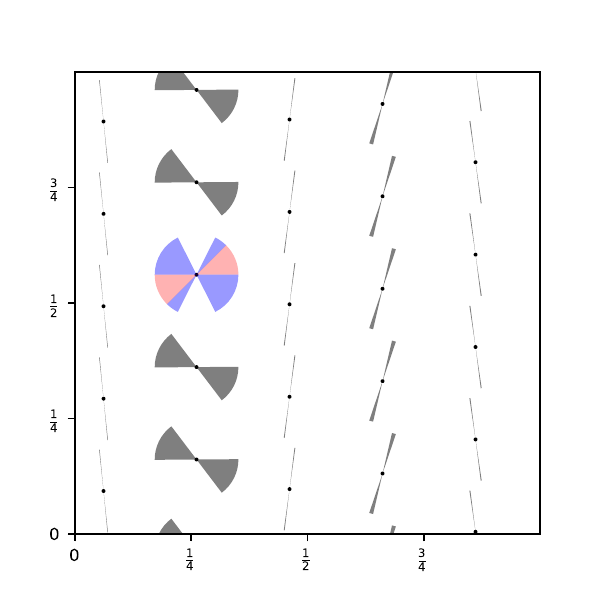}}
		\qquad
	\subfloat[]{
		\includegraphics[width=0.48\textwidth]{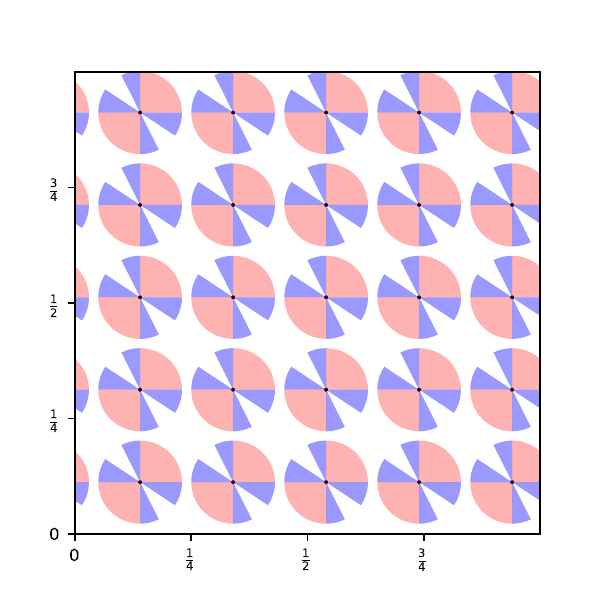}}
		\qquad
	\subfloat[]{
		\includegraphics[width=0.48\textwidth]{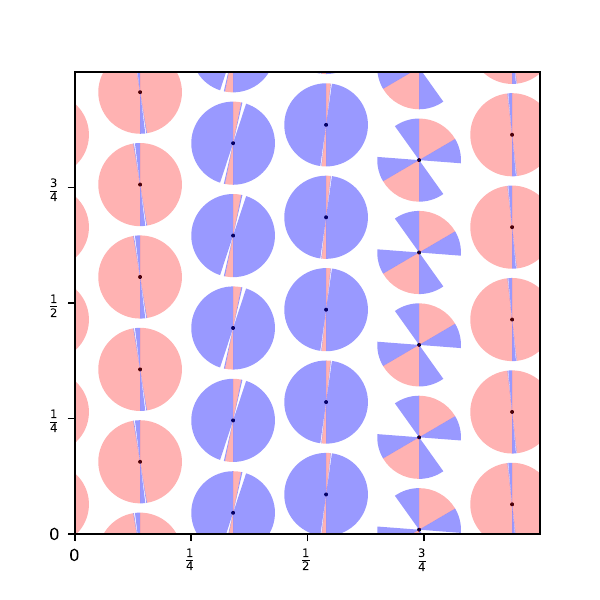}}
	\caption{Figure (a) shows the $25$ preimages of a randomly chosen point
	$x = (0.594, 0.287)$ under the map $E_k$ when $k=2$, together with the
cones $E_k^{-1} \Delta_\alpha^v$ with $\alpha = 2$. Figure (b) shows the
preimages of these points under the map $S_t^{-1} = S_{-t}$ for $t=1.5$,
together with the corresponding cones. (Taking $t$ bigger would make some of the
cones hard to see because they would be too thin.) In this example we see that
$(Df_{y})^{-1} \Delta_\alpha^v \subset \Delta_\alpha^v$ for $20$ choices of $y
\in f^{-1}(x)$. At one of the $y$'s (say $y_0$) for which this fails, the
illustration shows the horizontal cone $\Delta_\alpha^h$ instead of
$(Df_{y_0})^{-1}\Delta_\alpha^v$. The horizontal cone at $y_0$ is divided into
two parts, reflecting the quadrant to which it is mapped under $E^{-1}$. 
Finally, in figure (d) we see $(Df_z)^{-1}\Delta_\alpha^h$ for every $z \in f^{-1}(y_0)$. 
For $10$ choices of $z$, vectors in $\Delta_\alpha^h$ which lie in the second
and fourth quadrant get mapped inside $\Delta_\alpha^v$ 
while, for $10$ other choices, vecotrs in the first and third quadrant get mapped into $\Delta_\alpha^v$.} 		
\end{figure}

\section{Continuity of the characteristic exponents}
\label{sec:continuity}

In this part we prove \hyperlink{theoremB}{Theorem B}. Recall that from Proposition \ref{pro:existeexpneg} we already know that if $f\in\mathcal U_1$, then for $\mu$ almost every point $x\in\TT$ (or $\hat\mu$-a.e. $\hat x\in \hTTf$) we have that the two Lyapunov exponents at $x$ (resp. $\hat x$) are different.

Let us spell some facts used in the proof. The first lemma is well known in the theory (cf.\@ \cite{LyaViana}).

\begin{lemma}[Projectivized cocycles]\label{le:projective}
Assume that $A$ is a two-dimensional cocycle over the invertible map $f:M\rightarrow M$ with invariant measure $\mu$. Let $\Pp f:\Pp M\rightarrow \Pp M$ be its projectivization, which is a bundle map over $f$. Assume that the two Lyapunov exponents $\lambda^+(x)$ and $\lambda^-(x)$ are different at $\mu$-almost every point, with the corresponding Lyapunov subspaces $E^+(x)$ and $E^-(x)$. Then:
\begin{itemize}
\item
If $\mu$ is ergodic then there are exactly two ergodic lifts of $\mu$ to $PM$, $\mu^{p+}$ and $\mu^{p-}$. The disintegrations of $\mu^{p+}$ along the fibers of $\Pp M$ are exactly the Dirac measures at the Lyapunov spaces $E^+$, and a similar statement holds for $\mu^{p-}$.
\item
Suppose that $\mu^p$ is a lift of $\mu$ (not necessarily ergodic) to $\Pp M$, then there is a measurable $f$-invariant function $\rho:M\rightarrow[0,1]$ such that the disintegrations of $\mu^p$ along the fibers of $\Pp M$ are $\rho(x)\delta_{E^+(x)}+(1-\rho(x))\delta_{E^-(x)}$.
\end{itemize}
\end{lemma}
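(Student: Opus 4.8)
\emph{Proof plan.} The plan is to establish the second bullet first and deduce the first from it. Given an arbitrary lift $\mu^p$ of $\mu$, I would disintegrate $\mu^p$ over $\mu$ along the fibration $\Pp M\to M$, obtaining conditional probabilities $\mu^p_x$ on the fibre $\Pp(\Real^2)\cong\Pp^1$; essential uniqueness of the disintegration then turns invariance of $\mu^p$ under $\Pp f$ into the fibrewise cocycle relation $(\Pp A_x)_*\mu^p_x=\mu^p_{fx}$ for $\mu$-a.e.\ $x$. Since $A_x\in\GL_2(\Real)$, the map $\Pp A_x$ is a bijection of $\Pp^1$ carrying $E^{\pm}(x)$ to $E^{\pm}(fx)$; so, pulling back the atoms $\{E^{\pm}(fx)\}$ through $\Pp A_x$, the functions $\rho(x)=\mu^p_x(\{E^+(x)\})$ and $\sigma(x)=\mu^p_x(\{E^-(x)\})$ are measurable and $f$-invariant.

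The crux is to show $\rho(x)+\sigma(x)=1$ for $\mu$-a.e.\ $x$, i.e.\ that $\mu^p_x$ is carried by the two Oseledets lines. I would argue by contradiction: restricting $\mu^p$ to the $\Pp f$-invariant Borel set $\{(x,\ell):\ell\neq E^+(x),\ \ell\neq E^-(x)\}$ and renormalizing yields a $\Pp f$-invariant probability $\bar\eta$ whose conditionals $\bar\eta_x$ charge no Oseledets line, projecting onto an $f$-invariant probability $\mu'$ with $f$-invariant density with respect to $\mu$ (hence $\mu'\ll\mu$). Then I would consider the measurable function $G(y)=\int\sin\angle(\ell,E^+(y))\,d\bar\eta_y(\ell)\in[0,1]$. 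Iterating the cocycle relation forward gives $\bar\eta_{f^nx}=(\Pp A^{(n)}_x)_*\bar\eta_x$, so $G(f^nx)=\int\sin\angle(\Pp A^{(n)}_x\ell,E^+(f^nx))\,d\bar\eta_x(\ell)$; for $\bar\eta_x$-a.e.\ $\ell$ one has $\ell\neq E^-(x)$, so Oseledets' theorem gives $\sin\angle(\Pp A^{(n)}_x\ell,E^+(f^nx))\to 0$, and dominated convergence yields $G\circ f^n\to 0$ $\mu'$-a.e. But $\int G\circ f^n\,d\mu'=\int G\,d\mu'$ for all $n$ by invariance, so $\int G\,d\mu'=0$ and $G\equiv 0$ $\mu'$-a.e., i.e.\ $\bar\eta_y=\delta_{E^+(y)}$ on a set of positive $\mu'$-measure, contradicting that $\bar\eta_y$ charges no Oseledets line. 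Therefore $\sigma=1-\rho$ and $\mu^p_x=\rho(x)\delta_{E^+(x)}+(1-\rho(x))\delta_{E^-(x)}$ with $\rho$ measurable and $f$-invariant, which is the second bullet.

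To get the first bullet I would specialize to $\mu$ ergodic: then $\rho$ is a constant $c$, and $\mu^p=c\,\mu^{p+}+(1-c)\,\mu^{p-}$, where $\mu^{p\pm}$ is the pushforward of $\mu$ by the measurable section $x\mapsto(x,E^{\pm}(x))$. Each $\mu^{p\pm}$ is $\Pp f$-invariant and ergodic, because that section is a measurable isomorphism intertwining $(M,f,\mu)$ with $(\,\cdot\,,\Pp f,\mu^{p\pm})$ on the (invariant) graph of $E^{\pm}$; and since $E^+\neq E^-$ $\mu$-a.e., the measures $\mu^{p+},\mu^{p-}$ are mutually singular. Consequently a lift $\mu^p$ is ergodic iff $c\in\{0,1\}$, i.e.\ iff $\mu^p\in\{\mu^{p+},\mu^{p-}\}$; so there are exactly two ergodic lifts, with the asserted disintegrations.

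I expect the crux step (that the conditionals live on the Oseledets lines) to be the only genuine difficulty; the rest is disintegration bookkeeping. Two points to keep in mind: we use implicitly that $E^+$ is genuinely defined, which is automatic in the application since $f=\hat f$ is invertible (the natural extension); and for the second bullet $\mu$ need \emph{not} be ergodic, which is precisely why $\rho$ is only $f$-invariant and not constant. One should also fix at the outset a single $f$-invariant full-measure set on which the cocycle relation, Oseledets' theorem, and the functions $\rho,\sigma,G$ are simultaneously well behaved, so that the forward iteration $\bar\eta_{f^nx}=(\Pp A^{(n)}_x)_*\bar\eta_x$ and the pointwise limits above are legitimate.
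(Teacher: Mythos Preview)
The paper does not prove this lemma: it is introduced as ``well known in the theory'' with a reference to \cite{LyaViana}, so there is no in-paper proof to compare against. Your argument is correct and is essentially the standard one found in that reference: disintegrate the lift, derive the fibrewise cocycle relation $(\Pp A_x)_*\mu^p_x=\mu^p_{fx}$, and rule out any mass off the Oseledets lines by exploiting that forward iteration pulls every direction $\ell\neq E^-(x)$ toward $E^+(f^nx)$; your device of integrating the nonnegative functional $G$ and using invariance of $\mu'$ to force $\int G\,d\mu'=0$ is a clean way to package this. The derivation of the first bullet from the second (constancy of $\rho$ under ergodicity, and ergodicity of $\mu^{p\pm}$ via the measurable section conjugacy) is likewise standard and correct. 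Your caveat that a genuine Oseledets \emph{splitting} (not just a filtration) is being used---hence the implicit invertibility of the base, which is indeed the setting when the lemma is applied to $\hat f$---is well taken.
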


\subsection{Product measures} We will be concerned with measures which have product disintegrations. The following has a direct analogue in previous work for the so-called ``cocycles with continuous holonomy'', and ``$s$-states and $u$-states''. We state the results in the general case of products of metric spaces: \color{black} the existence of holonomy translates in our setting to the product structure of the spaces involved, while the invariance of disintegrations of measures under holonomies translates to the product structure of the corresponding measures.

The next result is a characterization of product measures. Let us remark that what we call ``product measure'' is stronger than the notion of ``measure with product structure'' which is common in the literature, more precisely we ask that the density is constant, so the measure on the product space is exactly the product of two measures on the two factor spaces. \color{black}

\begin{lemma}[Product measures]\label{le:productmeasures}
Let $X$, $Y$ be compact metric spaces and $\mu$ a Borel probability measure on $X\times Y$. Then $\mu$ is a product measure (i.e. $\mu=(\pi_{X})_*\mu \times 
(\pi_{Y})_* \mu$) if and only if, for any $f\in C(X,\mathbb R)$ and $g\in C(Y,\mathbb R)$ we have
\begin{align}\label{eq:product}
\nonumber\int_{X\times Y}f\circ\pi_X \cdot g\circ \pi_Y \ d\mu &=
\int_{X\times Y} f \circ \pi_X \ d\mu\int_{X\times Y } g \circ \pi_Y \ d\mu\\
& = \int_X f\ d(\pi_{X})_* \mu \cdot \int_Y g \ d(\pi_{Y})_* \mu.
\end{align}
\end{lemma}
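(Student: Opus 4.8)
The plan is to treat the trivial implication directly and reduce the substantial one to Stone--Weierstrass together with the Riesz representation theorem. For the ``only if'' direction nothing is needed: if $\mu=(\pi_X)_*\mu\times(\pi_Y)_*\mu$, then \eqref{eq:product} is just Tonelli's theorem applied to the nonnegative (after splitting into positive and negative parts) continuous function $f\circ\pi_X\cdot g\circ\pi_Y$.

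For the converse, the first step I would take is to introduce the candidate product measure $\nu:=(\pi_X)_*\mu\times(\pi_Y)_*\mu$ and observe, again by Tonelli, that $\nu$ itself satisfies \eqref{eq:product} with the very same marginals $(\pi_X)_*\mu$ and $(\pi_Y)_*\mu$. Thus the hypothesis amounts to the statement that $\int_{X\times Y}h\, d\mu=\int_{X\times Y}h\, d\nu$ for every $h$ of the form $f\circ\pi_X\cdot g\circ\pi_Y$ with $f\in C(X,\mathbb R)$, $g\in C(Y,\mathbb R)$, hence, by linearity of the integral, for every $h$ in the linear span $\mathcal A$ of such functions inside $C(X\times Y,\mathbb R)$.

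The second step is to show that $\mathcal A$ is uniformly dense in $C(X\times Y,\mathbb R)$, via the Stone--Weierstrass theorem. Here I would check that $\mathcal A$ is a subalgebra --- the product of two generators is again a generator since $(f_1\circ\pi_X\cdot g_1\circ\pi_Y)(f_2\circ\pi_X\cdot g_2\circ\pi_Y)=(f_1f_2)\circ\pi_X\cdot(g_1g_2)\circ\pi_Y$ --- that it contains the constants (take $f\equiv 1$, $g\equiv1$), and that it separates points: if $(x,y)\neq(x',y')$ then $x\neq x'$ or $y\neq y'$, and since $X$ and $Y$ are compact metric spaces their continuous functions separate points, so a function pulled back from the coordinate in which the two points differ lies in $\mathcal A$ and separates them. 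As $X\times Y$ is compact, Stone--Weierstrass gives $\overline{\mathcal A}=C(X\times Y,\mathbb R)$. Finally, $\mu$ and $\nu$ are finite Borel (Radon) measures on the compact metric space $X\times Y$, hence are determined by the continuous linear functionals $h\mapsto\int h\,d\mu$ and $h\mapsto\int h\,d\nu$ on $C(X\times Y,\mathbb R)$ (Riesz representation); these functionals are bounded (of norm $1$) and agree on the dense subspace $\mathcal A$, so they agree on all of $C(X\times Y,\mathbb R)$, whence $\mu=\nu$.

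I do not expect a genuine obstacle in this argument; it is a packaging of standard facts. The only points deserving a line of care are the verification that $\mathcal A$ separates points, which is exactly where metrizability (equivalently, the supply of Urysohn functions) of $X$ and $Y$ is used, and the appeal to the Riesz representation theorem, for which compactness of $X\times Y$ is what guarantees that Borel probability measures are pinned down by their action on $C(X\times Y,\mathbb R)$.
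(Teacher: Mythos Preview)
Your proof is correct. The paper's own argument takes a slightly different and more compressed route: it simply observes that condition \eqref{eq:product} amounts to saying that the coordinate projections $\pi_X$ and $\pi_Y$, viewed as random variables on $(X\times Y,\mu)$, are independent (first for continuous test functions, then for all measurable ones), and that independence of the coordinate projections is equivalent to $\mu$ being a product measure. Your approach is more explicit: rather than invoking the probabilistic characterization via independence, you directly compare $\mu$ with the candidate $\nu=(\pi_X)_*\mu\times(\pi_Y)_*\mu$ on the tensor algebra and close up using Stone--Weierstrass and Riesz. Both arguments are standard; yours has the advantage of making the passage from continuous to arbitrary test functions fully transparent, whereas the paper leaves that step implicit in the word ``independent''.
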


\begin{proof}
The condition on the integrals is equivalent to the fact that any pair of measurable functions $f:(X,(\pi_{X})_*\mu) \to \Real$ and $g:(Y,(\pi_{Y})_* \mu) \to \Real$ are independent, and this is equivalent to $\mu$ being a product measure.

\end{proof}

The next result says that the property of being a product measure is closed under weak* limits.

\begin{corollary}[Limits of product measures]
A weak* limit of Borel product probability measures is a Borel product probability measure.
\end{corollary}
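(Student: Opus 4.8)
The plan is to reduce everything to the characterization of product measures established in Lemma~\ref{le:productmeasures}. Let $(\mu_n)_{n\geq 1}$ be a sequence of Borel product probability measures on $X\times Y$ converging in the weak* topology to a measure $\mu$. First I would note that $\mu$ is again a Borel probability measure: it is nonnegative as a weak* limit of nonnegative measures, and testing weak* convergence against the constant function $1\in C(X\times Y,\mathbb R)$ gives $\mu(X\times Y)=\lim_n\mu_n(X\times Y)=1$.

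Next, fix arbitrary $f\in C(X,\mathbb R)$ and $g\in C(Y,\mathbb R)$. Since $X,Y$ are compact metric spaces and the coordinate projections $\pi_X,\pi_Y$ are continuous, the functions $f\circ\pi_X$, $g\circ\pi_Y$ and their product $(f\circ\pi_X)\cdot(g\circ\pi_Y)$ all belong to $C(X\times Y,\mathbb R)$. Hence weak* convergence applies to each of them, and
\begin{align*}
\int_{X\times Y} f\circ\pi_X\cdot g\circ\pi_Y\ d\mu_n &\longrightarrow \int_{X\times Y} f\circ\pi_X\cdot g\circ\pi_Y\ d\mu,\\
\int_{X\times Y} f\circ\pi_X\ d\mu_n &\longrightarrow \int_{X\times Y} f\circ\pi_X\ d\mu,\\
\int_{X\times Y} g\circ\pi_Y\ d\mu_n &\longrightarrow \int_{X\times Y} g\circ\pi_Y\ d\mu
\end{align*}
as $n\to\infty$. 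Because each $\mu_n$ is a product measure, Lemma~\ref{le:productmeasures} applied to $\mu_n$ gives, for every $n$,
\[
\int_{X\times Y} f\circ\pi_X\cdot g\circ\pi_Y\ d\mu_n = \left(\int_{X\times Y} f\circ\pi_X\ d\mu_n\right)\left(\int_{X\times Y} g\circ\pi_Y\ d\mu_n\right).
\]
Letting $n\to\infty$: the left-hand side converges to $\int_{X\times Y} f\circ\pi_X\cdot g\circ\pi_Y\ d\mu$, while the right-hand side, being the product of two convergent sequences of reals, converges to $\left(\int_{X\times Y} f\circ\pi_X\ d\mu\right)\left(\int_{X\times Y} g\circ\pi_Y\ d\mu\right)$. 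Thus $\mu$ satisfies \eqref{eq:product} for all $f\in C(X,\mathbb R)$ and $g\in C(Y,\mathbb R)$, and the converse direction of Lemma~\ref{le:productmeasures} shows that $\mu$ is a product measure.

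There is essentially no obstacle here: the proof is a direct passage to the limit using the previous lemma. The only points requiring (minimal) care are checking that the pulled-back test functions are continuous on the compact space $X\times Y$ so that weak* convergence genuinely applies to all three integrals simultaneously, and verifying that the limit measure has total mass one (so that it is a probability measure and Lemma~\ref{le:productmeasures} is applicable to it).
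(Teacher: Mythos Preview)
Your proof is correct and follows exactly the approach of the paper: apply Lemma~\ref{le:productmeasures} to each $\mu_n$, pass to the limit using that $f\circ\pi_X$, $g\circ\pi_Y$ and their product are continuous (equivalently, that the marginals converge weakly), and conclude via the converse direction of the same lemma. The paper states this in one line, while you have spelled out the routine verifications, but there is no substantive difference.
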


\begin{proof}
The proof is direct application of Lemma \ref{le:productmeasures}, using the observation that if $(\mu^k)_k$ converges weakly to $\mu$ then the same happens for its marginals on $X$ and $Y$.
\end{proof}

We are interested in the case when a measure has disintegrations which are product measures. For simplicity we consider products of metric spaces, but similar results can be obtained for continuous bundles. We give first a characterization of measures with product disintegrations.

\begin{lemma}[Measures with product disintegrations]\label{le:productmeasuresd}
Let $X,Y,Z$ be compact metric spaces and let $\mu$ be a Borel probability measure on $W = X \times Y \times Z$. Denote by $\mu_x$ the conditionals of $\mu$ along $\{x\} \times Y \times Z$ for $x\in X$. Then $\mu_x$ is a product measure for $(\pi_{X})_*\mu$-a.e. $x\in X$ if and only if, for every $f\in C(X,\mathbb R),\ g\in C(Y,\mathbb R)$ and $h\in C(Z,\mathbb R)$ we have
\begin{equation}\label{eq:integralproduct}
\int_W f(x)\cdot g(y)\cdot h(z)\ d\mu(x,y,z)=\int_W \left(\int_Y g \ d(\pi_{Y})_* \mu_x\right)\cdot f(x) \cdot h(z)\ d\mu(x,y,z).
\end{equation}

\end{lemma}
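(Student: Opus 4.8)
The plan is to reduce the statement to the two-factor characterization of product measures already proved in Lemma~\ref{le:productmeasures}, applied fiberwise on $Y\times Z$ with the conditional $\mu_x$ playing the role of the measure. For a fixed $x$, Lemma~\ref{le:productmeasures} (with $Y$ and $Z$ as the two factors) says that $\mu_x$ is a product measure if and only if
\[
\Psi_{g,h}(x):=\int_{Y\times Z} g(y)h(z)\,d\mu_x-\left(\int_Y g\,d(\pi_{Y})_*\mu_x\right)\left(\int_Z h\,d(\pi_{Z})_*\mu_x\right)=0
\]
for all $g\in C(Y,\mathbb R)$ and $h\in C(Z,\mathbb R)$. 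By Rokhlin's disintegration theorem the maps $x\mapsto\int\phi\,d\mu_x$ are Borel measurable for bounded measurable $\phi$, so each $\Psi_{g,h}$ is a bounded Borel function on $X$, hence integrable against the probability measure $(\pi_{X})_*\mu$. A direct Fubini computation using $\mu=\int\mu_x\,d(\pi_{X})_*\mu(x)$ shows that, for every $f\in C(X,\mathbb R)$,
\[
\int_X f(x)\,\Psi_{g,h}(x)\,d(\pi_{X})_*\mu(x)=\int_W f g h\,d\mu-\int_W\left(\int_Y g\,d(\pi_{Y})_*\mu_x\right)f(x)\,h(z)\,d\mu(x,y,z),
\]
so that \eqref{eq:integralproduct} is exactly the assertion that $\int_X f\cdot\Psi_{g,h}\,d(\pi_{X})_*\mu=0$ for all continuous $f,g,h$.

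For the "only if" direction I would argue: if $\mu_x$ is a product measure for $(\pi_{X})_*\mu$-a.e.\ $x$, then for each fixed pair $(g,h)$ the easy (Fubini) direction of Lemma~\ref{le:productmeasures} gives $\Psi_{g,h}(x)=0$ for a.e.\ $x$, and integrating against $f$ yields \eqref{eq:integralproduct}. For the "if" direction, fix $g,h$; since $C(X,\mathbb R)$ is dense in $L^1((\pi_{X})_*\mu)$ (by regularity of Borel measures on a compact metric space), the vanishing of $\int_X f\cdot\Psi_{g,h}\,d(\pi_{X})_*\mu$ for all continuous $f$ forces $\Psi_{g,h}=0$ $(\pi_{X})_*\mu$-a.e. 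To upgrade this to a single full-measure set of $x$ that works for all $g,h$ at once, I would use that $Y$ and $Z$ are compact metric, hence $C(Y,\mathbb R)$ and $C(Z,\mathbb R)$ are sup-norm separable: choosing countable dense subsets $\{g_i\}\subset C(Y,\mathbb R)$, $\{h_j\}\subset C(Z,\mathbb R)$ and discarding the countable union of the associated null sets, one is left with a full-measure set of $x$ on which $\Psi_{g_i,h_j}(x)=0$ for all $i,j$; since $g\mapsto\int g\,d\nu$ is sup-norm continuous for any finite measure $\nu$, this forces $\Psi_{g,h}(x)=0$ for all $g\in C(Y,\mathbb R)$, $h\in C(Z,\mathbb R)$, and Lemma~\ref{le:productmeasures} then gives that $\mu_x$ is a product measure for every such $x$.

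The Fubini manipulations identifying the right-hand side of \eqref{eq:integralproduct} with $\int_X f\,\Psi_{g,h}\,d(\pi_{X})_*\mu$, together with the measurability statements drawn from the disintegration theorem, are routine. The one point I would flag as the main obstacle is the final bookkeeping: passing from the a.e.\ vanishing of $\Psi_{g,h}$ for each individual pair $(g,h)$ to a single exceptional null set of $x$'s outside of which $\mu_x$ is literally the product of its marginals — this is precisely where separability of $C(Y,\mathbb R)$ and $C(Z,\mathbb R)$ is essential, and without it one would only obtain a null set depending on $(g,h)$.
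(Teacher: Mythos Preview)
Your proof is correct and follows essentially the same approach as the paper's: both reduce to Lemma~\ref{le:productmeasures} fiberwise, rewrite \eqref{eq:integralproduct} as the vanishing of $\int_X f\cdot\Psi_{g,h}\,d(\pi_X)_*\mu$ via the disintegration identity, and use that continuous functions separate $L^1$. The Fubini computation you indicate is exactly the one the paper carries out. If anything, your treatment is slightly more complete: the paper passes tacitly from ``for each $(g,h)$, $\Psi_{g,h}=0$ a.e.'' to ``a.e.\ $x$, $\mu_x$ is a product'', while you make explicit the separability argument (countable dense families in $C(Y,\mathbb R)$ and $C(Z,\mathbb R)$) needed to obtain a single full-measure exceptional set.
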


\begin{proof}
Consider the probability kernel induced by $\{\mu_z:z\in X\}$: that is, for $F\in C(Y \times Z, \R)$, 
\(
KF(x)=\int_{Y \times Z} F(y,z)\ d\mu_x(y,z)
\).
By the previous lemma, $\mu_x$ is a product if and only if for every $g \in C(Y,\mathbb R), h\in C(Z,\mathbb R)$ it holds
\[
	K(gh)(x)=\int_Y g \ d(\pi_{Y})_* \mu_x \cdot \int_Z h \ d(\pi_{Z})_*\mu_x; 
\]
since continuous functions are separating in $L^1(X,(\pi_{X})_*\mu)$, the previous equality holds for $(\pi_{X})_*\mu$-a.e. $x\in X$ if and only if for every $f\in C(X,\mathbb R)$ it holds
\begin{align} \label{almost_everywhere_product}
\int_X K(gh) \cdot f \ d(\pi_{X})_*\mu = \int_X f(x) \cdot \int_Y g \ d(\pi_{Y})_* \mu_x \cdot \int_Z h \ d(\pi_{Z})_*\mu_x \ d(\pi_X)_* \mu(x).
\end{align}
But $\int_Z h \ d(\pi_Z)_* \mu_x = \int_{Y \times Z} h(z) \ d\mu_x(y,z)$, so the right hand side of \eqref{almost_everywhere_product} can be written as

\begin{align*} &\int_X f(x) \cdot \left(\int_Y g \ d(\pi_{Y})_*\mu_x \int_{Y \times Z} h(z) d\mu_x(y, z)\right) \ d(\pi_{X})_*\mu(x)\\
& =\int_X  f(x) \left(\int_{Y \times Z} \int_Y g(y') d(\pi_{Y})_*\mu_x(y')\cdot h(z) \ d\mu_x(y, z)\right)  \ d(\pi_{X})_*\mu(x)\\
& =\int_W   \int_Y g \ d(\pi_{Y})_* \mu_x \cdot f(x) \cdot h(z)   \ d\mu(x,y,z).
\end{align*}
By definition of conditionals, 
\[\int_P K(fg)(z) h(z)d\pi_{P*}\mu(z)=\int_X f(x)g(y)h(z)d\mu(x,y,z),
\]
and the claim follows.
\end{proof}

We would like to pass to the limit the property of having product disintegrations. Unfortunately this is not always the case and one can easily construct examples of a sequence of measures with product disintegrations which converges in the weak* topology to a measure without product disintegrations. So we have to add some extra conditions in order to be able to pass to the limit. We have the following result.

\begin{lemma}[Limits of measures with product disintegrations]\label{le:limitproductd}
Let $X, Y, Z$ be compact metric spaces. Let $\mu^k$ be a sequence of Borel probability measures on $W =X \times Y \times Z$ such that $\mu^k$ converges to $\mu$ in the weak* topology, and for $(\pi_{X})_* \mu^k$-a.e.\@ $x \in X$, the disintegration $\mu_x^k$ is a product measure on $Y \times Z$.\\
Suppose that one of the following two conditions is verified:
\begin{enumerate}

\item
Given any $g \in C(Y, \R)$, the functions $\alpha^k(x) = \int_Y g \ d(\pi_Y)_* \mu_x^k$ and \newline $\alpha(x) =$ $\int_Y g \ d(\pi_Y)_* \mu_x$ can be extended continuously to all of $X$ and, moreover, $\alpha^k(x)$ converges uniformly to $\alpha(x)$. 

\item
The measures $(\pi_{X})_* \mu^k$ are equivalent to $(\pi_{X})_* \mu$ with the Jacobian $J^k=\frac{d(\pi_{X})_* \mu^k}{d(\pi_{X})_* \mu}$ uniformly bounded from above, and $(\pi_Y)_*\mu_x^k$ converges in the weak* topology to $(\pi_Y)_*\mu_x$ for $(\pi_{X})_*\mu$-a.e.\@ $x\in X$.
\end{enumerate}
Then the disintegrations $\mu_x$ of $\mu$ are product measures for $(\pi_{X})_* \mu$-a.e.\@ $x\in X$.
\end{lemma}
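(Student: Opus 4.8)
The plan is to verify, for the limit measure $\mu$, the integral identity~\eqref{eq:integralproduct} from Lemma~\ref{le:productmeasuresd}, and then invoke that lemma to conclude. Fix $f\in C(X,\R)$, $g\in C(Y,\R)$ and $h\in C(Z,\R)$, and put $\alpha^k(x)=\int_Y g\,d(\pi_Y)_*\mu^k_x$ and $\alpha(x)=\int_Y g\,d(\pi_Y)_*\mu_x$; since the measures involved are probabilities, $\|\alpha^k\|_\infty,\|\alpha\|_\infty\le\|g\|_\infty$ for every $k$. Because $\mu^k_x$ is a product for $(\pi_X)_*\mu^k$-a.e.\ $x$, Lemma~\ref{le:productmeasuresd} applied to $\mu^k$ gives
\[
\int_W f(x)g(y)h(z)\,d\mu^k=\int_W \alpha^k(x)f(x)h(z)\,d\mu^k\qquad(k\ge 1).
\]
The left-hand side converges to $\int_W f(x)g(y)h(z)\,d\mu$ because $(x,y,z)\mapsto f(x)g(y)h(z)$ is continuous on $W$ and $\mu^k\to\mu$ weak*. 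So the whole matter reduces to showing
\[
\int_W \alpha^k(x)f(x)h(z)\,d\mu^k\longrightarrow\int_W \alpha(x)f(x)h(z)\,d\mu\qquad(k\to\infty),
\]
for this identifies $\int_W f(x)g(y)h(z)\,d\mu$ with $\int_W \alpha(x)f(x)h(z)\,d\mu$, which is exactly~\eqref{eq:integralproduct} for $\mu$; Lemma~\ref{le:productmeasuresd} then gives that $\mu_x$ is a product for $(\pi_X)_*\mu$-a.e.\ $x$.

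To prove that convergence I will write the difference of its two sides as $(\mathrm I_k)+(\mathrm{II}_k)$ with $(\mathrm I_k)=\int_W\big(\alpha^k(x)-\alpha(x)\big)f(x)h(z)\,d\mu^k$ and $(\mathrm{II}_k)=\int_W \alpha(x)f(x)h(z)\,d\mu^k-\int_W \alpha(x)f(x)h(z)\,d\mu$, and show each tends to $0$. For $(\mathrm I_k)$, disintegrating $\mu^k$ over $X$ and bounding $\big|\int_{Y\times Z} h(z)\,d\mu^k_x\big|\le\|h\|_\infty$ gives $|(\mathrm I_k)|\le\|f\|_\infty\|h\|_\infty\int_X|\alpha^k-\alpha|\,d(\pi_X)_*\mu^k$. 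Under hypothesis~(1), using the continuous extensions, this is $\le\|f\|_\infty\|h\|_\infty\|\alpha^k-\alpha\|_\infty$, which vanishes by the assumed uniform convergence. Under hypothesis~(2), the weak* convergence $(\pi_Y)_*\mu^k_x\to(\pi_Y)_*\mu_x$ for $(\pi_X)_*\mu$-a.e.\ $x$ gives $\alpha^k(x)\to\alpha(x)$ for $(\pi_X)_*\mu$-a.e.\ $x$; rewriting $\int_X|\alpha^k-\alpha|\,d(\pi_X)_*\mu^k=\int_X|\alpha^k-\alpha|\,J^k\,d(\pi_X)_*\mu$, and using $0\le J^k\le C$ uniformly together with $|\alpha^k-\alpha|\le 2\|g\|_\infty$, dominated convergence gives $(\mathrm I_k)\to 0$.

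For $(\mathrm{II}_k)$ under hypothesis~(1): the continuous extension of $\alpha$ makes $(x,y,z)\mapsto\alpha(x)f(x)h(z)$ continuous on $W$, so $(\mathrm{II}_k)\to 0$ follows directly from weak* convergence. Under hypothesis~(2) this is the delicate step, since $\alpha$ is only bounded measurable. Given $\varepsilon>0$, density of $C(X)$ in $L^1(X,(\pi_X)_*\mu)$ supplies $\tilde\alpha\in C(X)$ with $\|\tilde\alpha-\alpha\|_{L^1((\pi_X)_*\mu)}<\varepsilon$; then $\big|\int_W(\alpha-\tilde\alpha)(x)f(x)h(z)\,d\mu^k\big|\le\|f\|_\infty\|h\|_\infty\int_X|\alpha-\tilde\alpha|\,J^k\,d(\pi_X)_*\mu\le\|f\|_\infty\|h\|_\infty\,C\,\varepsilon$ uniformly in $k$, while $\int_W\tilde\alpha(x)f(x)h(z)\,d\mu^k\to\int_W\tilde\alpha(x)f(x)h(z)\,d\mu$ because the integrand is continuous on $W$, and $\big|\int_W(\tilde\alpha-\alpha)(x)f(x)h(z)\,d\mu\big|\le\|f\|_\infty\|h\|_\infty\varepsilon$. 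Letting $\varepsilon\to0$ gives $(\mathrm{II}_k)\to 0$. Combining the estimates yields~\eqref{eq:integralproduct} for $\mu$, and Lemma~\ref{le:productmeasuresd} finishes the proof.

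I expect the only genuine obstacle to be $(\mathrm{II}_k)$ under hypothesis~(2): the disintegrated density $\alpha$ is a priori merely bounded measurable, so it cannot be plugged into the weak* convergence $\mu^k\to\mu$, and the uniform upper bound on the Jacobians $J^k$ is precisely what converts an $L^1((\pi_X)_*\mu)$-approximation of $\alpha$ by a continuous function into an error that is controlled uniformly in $k$; after that, weak* convergence applies to the continuous approximant. Everything else is routine manipulation with Rokhlin disintegrations, the uniform bound $\|\alpha^k\|_\infty\le\|g\|_\infty$, and dominated convergence.
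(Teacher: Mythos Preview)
Your proof is correct and follows essentially the same route as the paper's: apply Lemma~\ref{le:productmeasuresd} to each $\mu^k$, pass the left-hand side to the limit by weak* convergence, and for the right-hand side split the difference into the two terms $(\mathrm I_k)$ and $(\mathrm{II}_k)$ exactly as the paper does (the paper calls them $\int_W\beta^k\,d\mu^k-\int_W\beta\,d\mu^k$ and $\int_W\beta\,d\mu^k-\int_W\beta\,d\mu$). Your handling of the delicate term $(\mathrm{II}_k)$ under hypothesis~(2) via an $L^1((\pi_X)_*\mu)$-approximation of $\alpha$ by a continuous function, with the uniform Jacobian bound providing control uniformly in $k$, is precisely the argument the paper gives.
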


\begin{proof}
Fix $f\in C(X,\R), g \in C(Y,\R)$ and $h\in C(Z,\R)$. By Lemma \ref{le:productmeasuresd} we have
\[
	\int_W f(x)\cdot g(y)\cdot h(z)\ d\mu^k(x,y,z)=\int_W \left(\int_Y g d(\pi_{Y})_*\mu_x^k \right)\cdot f(x) \cdot h(z) \ d\mu^k(x,y,z)
\]

 The left hand side converges to \( \int_W f(x)\cdot g(y)\cdot h(z)\ d\mu(x,y,z)\) when $k$ goes to infinity hence, by the same lemma, to prove the result it suffices to show that the right hand side converges to \(\int_W \left( \int_Y g \ d(\pi_{Y})_*\mu_x\right)\cdot f(x)\cdot h(z)\ d\mu(x,y,z)\).

Consider the functions $\alpha^k, \alpha: X \rightarrow \R$,  
\begin{itemize}
	\item $\alpha^k(x)=\int_Y g \ d(\pi_{Y})_* \mu_x^k$, $\alpha(x)=\int_Y g \ d(\pi_{Y})_* \mu_x$,
\end{itemize}
which are defined for $(\pi_{X})_* \mu^k, (\pi_{X})_* \mu, $-a.e.\@ $x\in X$ respectively, and $\beta^k, \beta: W \to  \R$,
\begin{itemize}
	\item $\beta^k(x,y,z)=\alpha^k(x)\cdot f(x) \cdot h(z)$, $\beta(x,y,z)=\alpha(x)\cdot f(x) \cdot h(z)$. 
\end{itemize}
Then
\begin{align*}
\left|\int_W \beta^k\ d\mu^k-\int_W \beta\ d\mu\right|\leq &\left|\int_W \beta\ d\mu^k-\int_W \beta\ d\mu\right|\\
&+|g|_{\infty}\cdot|h|_{\infty}\int_X |\alpha^k-\alpha|\ d (\pi_X)_* \mu_k.
\end{align*}

If condition \emph{1} from the hypothesis is satisfied, then the functions $\alpha^k$ are continuous and converge uniformly to $\alpha$, hence both terms above converge to zero. On the other hand, if condition \emph{2} is satisfied, then there exists $X'\subset X$ with $(\pi_{X})_*\mu(X')=1$ such that for $x\in X', \alpha^k(x), \alpha(x)$ are well-defined and $\lim_{k\to\infty}\alpha^k(x)=\alpha(x)$. Since $\alpha^k,\alpha, J^k$ are uniformly bounded, we also get
\[
	\lim_{k\rightarrow\infty}\int_X |\alpha-\alpha_k|\ d(\pi_{X})_* \mu^k=\lim_{k\rightarrow\infty}\int_X |\alpha-\alpha_k|\cdot J^k \ d(\pi_{X})_* \mu=0.
\]
That $\left|\int_W \beta\ d\mu^k-\int_W \beta\ d\mu\right|$ converges to $0$ as $k\to\infty$ can be seen as follows. Take $(\tilde\alpha_m)_m$ a sequence of continuous functions converging in $L^1(\pi_{X\ast}\mu)$ to $\alpha$ and consider the corresponding $\tilde \beta^m=\alpha^m\cdot g\cdot h$. Then
\begin{align*}
&\left|\int_W \beta\ d\mu^k-\int_W \beta\ d\mu\right|\leq\\
&\left|\int_W \beta-\tilde\beta^m\ d\mu^k\right|+\left|\int_W \tilde\beta^m\ d\mu^k-\tilde\beta^m\ d\mu\right|+\left|\int_W \tilde\beta^m-\beta\ d\mu\right|\\
&\leq |g|_{\infty}\cdot|f|_{\infty}\cdot(\sup_k|J_k|_{\infty}+1)\int_X|\tilde\alpha^m-\alpha|\ d\pi_{X\ast}\mu+\left|\int_W \tilde\beta^m\ d\mu^k-\tilde\beta^m\ d\mu\right|.
\end{align*}
For $m$ large the first term is arbitrarily small, and then we can take $k$ large so that the second term is also small. This concludes the proof.
\end{proof}

\begin{remark}\label{re:product}
The conditions of the Lemma \ref{le:limitproductd} mean in fact that one of the two factors of the product disintegrations (corresponding to M) converges weakly to the factor of the disintegrations of the limit measure (uniformly or pointwise). However the other factor (corresponding to $N$) of the disintegrations may not converge.
\end{remark}

\subsection{Proof of Continuity.}

Before digging into the proof of \hyperlink{theoremB}{Theorem B}, let us make a tecnical comment on its proof.

Let $(f_n)$ be a sequence in $\mathcal U_1$ converging to $f\in\mathcal U_1$ in the $\mathcal C^1$ topology. For each $n$ let $\hmu_n$ be the unique $\hf_n$-invariant measure on $\hTTf[f_n]$ projecting on $\mu$. By Proposition \ref{pro:existeexpneg} we know that all these cocycles have different Lyapunov exponents at almost every point. Let $\hat\mu_n^{p-}$ be the lift of $\hat\mu_n$ to $\Pp \hat f_n$ such that the disintegrations of $\hat\mu_n^{p-}$ along the projective fibers are Dirac measures at the Lyapunov subspaces ($\delta_{E^-_n}$) for $\hat\mu_n$-a.e. point. Similarly, the corresponding objects for $f$ are  $\hTTf,\ \hat\mu,\ \Pp\hat{f}$ and $\hat\mu^{p-}$.

Denoting by $v^s(x)$ the unit vector in the Lyapunov subspace $E^-(x)$, we have
\begin{align} \label{integrated_negative_lyap_exp}
\nonumber\int_{\mathbb T^2}\chi^-(f_n)d\mu=\int_{\hTTf[f_n]}\chi^-(\hf_n)d\hmu_n&=\int_{\hTTf[f_n]}\log\|D(f_n)_{\hx}v^s\|d\hmu_n(\hat x)\\
&=\int_{\Pp \hTTf[f_n]}\log\|D(f_n)_{\hat x}v\|d\hmu_n^{p-}(\hat x,v),
\end{align}
and similarly for $f$.

To prove continuity of the negative Lyapunov exponent (which indeed implies continuity of the positive Lyapunov exponent), we would like to pass the right hand side of \eqref{integrated_negative_lyap_exp} to the limit, saying that $\hmu_n^{p-}$ converges to $\hmu^{p-}$ in the weak star topology. However, there's a technical nuisance here: the space $\hTTf$  depends on $f$, so that the measures $\hmu_n$ live on different spaces. The same is true for the measures $\hmu_n^{p-}$. For this reason, it is more convenient to work on an abstract solenoidal manifold $\Sol$ whose construction is outlined in the \hyperlink{sec:appendix}{Appendix}. Indeed, each $\hTTf[f_n]$ is homeomorphic to $\Sol$ in a canonical fashion. Thus \eqref{integrated_negative_lyap_exp} may be rewritten as 
\begin{equation} \label{integrated_negative_luap_exp_2}
\int_{\mathbb T^2}\chi^-(f_n)d\mu=\int_{\Sol}\chi^-(\fsol_n)d\bmu_n= \int_{\Pp \Sol}\|D f_n({\bx}) v\|d\bmu_n^{p-}(\bx,v),
\end{equation}
where $\bmu_n$ is the unique $\fsol_n$-invariant measure in $\Sol$ projecting on $\mu$ and $\bmu_n^{p-}$ the corresponding stable measures on $P \fsol_n$.

\begin{remark}
We could dispense with $\hTTf$ altogether and replace it with $\Sol$ throughout this paper. Indeed, $\hTTf$ can be seen simply as an embedding of $\Sol$ into $(\TT)^{\Z+}$ with the disadvantages that it depends on $f$ and that its solenoidal structure is somewhat hidden. However, we have decided to stick to $\hTTf$ in all sections but this one, due to its greater familiarity. We hope that this helps the casual reader.
\end{remark}

\begin{proof}[Proof of Theorem B]

Let $(f_n)$ be a sequence in $\mathcal U_1$ converging to $f\in\mathcal U_1$ in the $\mathcal C^1$ topology and let $\bmu_n$ and $\bmu_n^{p-}$ be the lifts to $\Sol$ and $\Pp \Sol$ as above. In virtue of \eqref{integrated_negative_luap_exp_2}, to prove Theorem B it suffices to prove that $\bmu_n^{p-}$ converges weakly* to $\bmu^{p-}$ as $n \to \infty$.
By proposition \ref{convergence_on_projetive_bundles}, this is equivalent to say that $\tbmu_n^{p-}$ converges weakly* to $\tbmu^{p-}$, where $\tbmu_n^{p-}$ and $\tbmu^{p-}$ are the lifts of $\bmu_n^{p-}$ and $\bmu^{p-}$ to $\Pp (\RR \times \Sigma)$, respectively\footnote{Recall that we are denoting $\Sigma=\{1,\cdots, d\}^{\N}$. For the definition of $\Sol$ and the different lifts of invariant measures we send the reader to the \hyperlink{sec:appendix}{Appendix}.}. 

Suppose, for the purpose of contradiction, that $\bmu_n^{p-}$ converges to some $\bmu^p$ different from $\bmu^{p-}$. We are going to show that $\bmu^p$ has product disintegrations, i.e. that it can be written as
\begin{equation} \label{structure_of_bmup} 
\bmu^p = \int_{\TT} \bmu_x \times \nu_x \ d\mu(x) 
\end{equation}
for some (measurable) family of measures $\nu_x$ on $\Pp \RR$. For that we have to analyze the convergence of $\bmu_n^{p-}$ to $\bmu^p$ in some more detail. But instead of analyzing this convergence directly, we are going to analyze the convergence of a sequence of measures on $[0,1]^2 \times \Sigma \times \Pp \RR$ which mirrors this sequence. The advantage of this is that, by doing so, the underlying space is a product space, allowing us to apply Lemma \ref{le:limitproductd}.

Let us write
\[
\begin{array}{lll}
Q = [0,1]^2, & \tbeta_n = \tbmu_n \vert Q \times \Sigma,  & \tbeta_n^{p-} = \tbmu_n^{p-} \vert Q \times \Sigma \times \Pp \RR \\
\teta = \tmu \vert Q, & \tbeta = \tbmu \vert Q \times \Sigma,  & \tbeta^{p} = \tbmu^{p} \vert Q \times \Sigma \times \Pp \RR
\end{array}
\]

Note that the boundary of $Q \times \Sigma \times \Pp \RR$ has zero $\tbmu^{p}$-measure, $(\pi_G,i)_* \tbeta_n^{p-} = \bmu_n^{p-}$, and $(\pi_G,i)_* \tbeta^p = \bmu^p$. (See the Appendix for notation.) We are now in the following situation:

\begin{enumerate}
\item
The projection of $\tbeta_n^{p-}$ to $Q \times \Sigma$ is $\tbeta_n $. The disintegrations of $\tbeta_n^{p-}$ along $ \{(\tx, \bomega) \} \times \Pp \RR$ are $\delta_{E^-_n(\pcov(\tx))}$ (see Lemma \ref{le:projective}). 
\item
The projection of $\tbeta_n$ to $Q$ is $\teta$ (the restriction of Lebesgue to the unit square). The disintegrations of $\tbeta_n$ along $\{\tx\}\times \Sigma$ are $\tbmu_{n,\tx}$ (defined as in \eqref{boldmutildextilde}).
\item
The projection of $\tbeta_n^{p-}$ to $Q$ is also $\teta$. The disintegrations of $\tbeta_n^{p-}$ along $\{\tx\} \times \Sigma \times \Pp \RR$ are $\delta_{E^-_n(\pcov(\tx))}\times\tbmu_{n,\tx}$. (This is because $E^-_n$ is constant on the fibers.)
\item
The disintegrations $\tbmu_{n,\tx}$ vary continuously with respect to  $\tx\in Q$. Furthermore  $\tbmu_{n,\tx}$ converge (uniformly) to $\tbmu_x$ (see Remark \ref{re:product} and Proposition \ref{prop:a4}).
\end{enumerate}

All these considerations show that we are within the hypothesis of Lemma \ref{le:limitproductd}, with $Q$ in place of $X$, $\Sigma$ in place of $Y$, $\Pp \RR$ in place of $Z$, $\tbeta_k^{p-}$ in place of $\mu^k$, and $\tbeta^{p}$ in place of $\mu$. (In fact both condition \emph{1} and condition \emph{2} are satisfied.) As a consequence we have that
\begin{equation} \label{tbtetap_is_intagral_of_products}
\tbeta^p=\int_{ Q}  \tbmu_{\tx}  \times \tilde{\nu}_{\tx} \ d\teta,
\end{equation}
where $\tilde{\nu}_{\tx}$, $\tx \in Q$,  are measures on $\Pp \RR$ (depending measurably on $x$). Writing $\nu_x = \tilde{\nu}_{\tx}$ when $\pcov(\tx) = x$ (which is well-defined $\teta$-a.e.) and applying $(\pi_G,i)_*$ to both sides of \eqref{tbtetap_is_intagral_of_products} gives \eqref{structure_of_bmup}. 

For the remainder of the proof we will abandon the space $\Sol$ in favor of $\hTTf$ and apply Lemma \ref{le:projective}. Thus, writing $\hmu^p = (\phi,i)_* \bmu^p$ and applying $(\phi,i)_*$ to both sides of \eqref{structure_of_bmup} we see that
\[ \hmu^p = \int_{\TT} \hat\mu_x \times \nu_x \ d\mu(x). \]

Since $\bmu_n^p$ are invariant under $PS f_n$, and $PS f_n$ converges to $PS f$, we have that $\bmu^p$ is invariant under $PS f$. Consequently, $\hmu^p$ is invariant under $P \hf$. Lemma \ref{le:projective} tells us that there exists an $\hat f$-invariant function $\hat\rho:\hTTf\rightarrow[0,1]$ such that
$$
\hat\mu^p=\int_{\hTTf}\hat\rho\delta_{E^+}+(1-\hat\rho)\delta_{E^-}\ d\hat\mu.
$$

Since the ergodic components of $\hat f$ are exactly the pre-images under $\pi_{ext}$ of the ergodic components of $f$, we have in fact that $\hat\rho$ is constant on the fibers $\pi_{ext}^{-1}(x)$, in other words there exists a measurable $f$-invariant function $\rho:\mathbb T^2\rightarrow[0,1]$ such that $\hat\rho=\rho\circ\pi_{ext}$. Remembering that $E^-$ is also constant on the fibers, we have that
\begin{eqnarray*}
\int_{\hTTf}\rho\circ\pi_{ext}\delta_{E^+}d\hat\mu&=&\hat\mu^p-\int_{\hTTf}(1-\rho\circ\pi_{ext})\delta_{E^-}d\hat\mu\\
&=&\int_{\mathbb T^2}\nu_x\times\hmu_xd\mu-\int_{\mathbb T^2}(1-\rho)\delta_{E^-}\times\hmu_xd\mu\\
&=&\int_{\mathbb T^2}\left(\nu_x-(1-\rho)\delta_{E^-}\right)\times\hmu_xd\mu\\
&=&\int_{\hTTf}\nu_{\pi_{ext}(\hat x)}-(1-\rho\circ\pi_{ext})\delta_{E^-}d\hat\mu.
\end{eqnarray*}

Then for $\hat\mu$ almost every $\hat x$ we have that 
\[\rho(\pi_{ext}(\hat x))\delta_{E^+(\hat x)}=\nu_{\pi_{ext}(\hat x)}-(1-\rho(\pi_{ext}(\hat x)))\delta_{E^-(\hat x)}.
\]
If $\hat\mu^p$ is different from $\hat\mu^{p-}$ then $\rho$ is nonzero on a set of positive measure. Then there exist a $\mu$-positive measure set of points $x\in\mathbb T^2$ such that $\rho(x)>0$ and $\rho(x)\delta_{E^+(\hat x)}=\nu_{x}-(1-\rho(x))\delta_{E^-(\hat x)}$ holds for $\hmu_x$-a.e $\hat x\in\pi_{ext}^{-1}(x)$. Since the right-hand side depends only on $x$, we have that for $\hmu_x$-a.e. $\hat x$, the unit vector $v^+(\hat x)$ inside $E^+(\hat x)$ is constant, equal to say $v^+(x)$. Since almost every $x\in\TT$ has the property that $\hmu_x$-a.e. point is Lyapunov regular, then there exists a set $A\subset\TT$ of positive $\mu$-measure such that for every $x\in A$ we have
\begin{eqnarray*}
\lim_{n\rightarrow\infty}\frac {I(x,v^+;f^n)}n&=&\lim_{n\rightarrow\infty}\frac 1n\int_{\pi_{ext}^{-1}(x)}\log\|(D_{\hat x}\hat f)^{-n}(v^+)\|d\hmu_x\\
&=&\int_{\pi_{ext}^{-1}(x)}\lim_{n\rightarrow\infty}\frac 1n\log\|(D_{\hat x}\hat f)^{-n}(v^+)\|d\hmu_x\\
&=&\int_{\pi_{ext}^{-1}(x)}-\chi^+d\hmu_x.
\end{eqnarray*}

This implies that $\mu$-a.e. $x\in A$ we have $$\chi^+(x)=-\lim_{n\rightarrow\infty}\frac {I(x,v^+;f^n)}n\leq -C_{\chi}(f).$$

But on the other hand we know from \ref{pro:existeexpneg} that $\mu$-a.e. $x\in\TT$ we have
$$
\chi^-(x)\leq-C_{\chi}(f),
$$
so for $\mu$-a.e. $x\in A$ we have
$$
-2C_{\chi}(f)\geq\chi^+(x)+\chi^-(x)=\lim_{n\rightarrow\infty}\log(\det Df^n(x))\geq C_{\det}(f),
$$
which is a contradiction with the definition of $\mathcal U_1$. This finishes the proof.
\end{proof}

\section{Pesin theory for endomorphisms}
\label{sec:Pesintheory}

We collect here some facts about non-uniformly hyperbolic endomorphisms that will be used in the forthcoming section to prove (stable) ergodicity. Throughout this part $f:M\to M$ denotes an endomorphism of the compact surface $M$; the distance on $M$ (induced by a Riemannian metric) between $x,y\in M$ is denoted by $d(x,y)$. It is assumed that $f$ is non-uniformly hyperbolic with respect to an smooth area $\mu$, and we maintain the notation $\hTTf, \hf,\hmu$, etc. as before (see part \ref{sub:naturalext}). We point out that in our notation $x_n$ (and not $x_{-n}$) is a preimage of $x_0$ for $f^n$. Recall also the definition of the set or regular points $\reg$  (cf.\@ $\hat \reg$) given in Lemma \ref{lem:exponentes}. We will use the following definitions of \cite{Liu2008} and \cite{pesinendo}.

\begin{definition}
 Let $\hx\in \hat\reg, x=\pext(\hx)$. 
\begin{enumerate}
    \item A $\mathcal C^{1,1}$ embedded interval $\Wuloc{\hx} \subset M$ is a local unstable manifold at $\hx$ if there exist constants $\lambda>0, 0<\varepsilon< \frac{\lambda}{200}, 0<C_1\leq 1<C_2$ so that $y_0\in \Wuloc{\hx}$ if and only if there exists a unique $\hy\in\hTTf, y_0=\pext(\hx)$ satisfying, for every $n\geq 0$
    \begin{align}\label{eq:localWu}
      d(x_n,y_n)\leq C_1 e^{-n\varepsilon}\\
      d(x_n,y_n)\leq C_2 e^{-n\lambda}.
    \end{align}
    The lift of $\Wuloc{\hx}$ to $\hTTf$ using $\pext$ is denoted by $\hWuloc{\hx}$.
    \item The unstable manifold of $f$ at $\hx$ is
    \[
      W^u(\hx)=\{y_0=\pext(\hy):\limsup_{n\to\infty}\frac{1}{n}\log d(x_n,y_n)<0\}.
    \]
    The lift of $W^u(\hx)$ to $\hTTf$ using $\pext$ is denoted by $W^u(\hx)$.
 
    \item A $\mathcal C^{1,1}$ embedded interval $\Wsloc{x} \subset M$ is a local stable manifold at $x$ if there exist constants $\lambda>0, 0<\varepsilon< \frac{\lambda}{200}, 0<C_1\leq 1<C_2$ so that $y\in \Wsloc{x}$ if and only if  for every $n\geq 0$
    \begin{align}\label{eq:localWs}
      d(f^n(x),f^n(y))\leq C_1 e^{-n\varepsilon}\\
      d(f^n(x),f^n(y))\leq C_2 e^{-n\lambda}.
    \end{align}
    The lift of $\Wsloc{x}$ to $\hTTf$ using $\pext$ is denoted by $\hWsloc{\hx}$.

    \item The stable manifold of $f$ at $x$ is
    \[
    W^s(x)=\pext\Big(\bigcup_{n=0}^{\infty} \hf^{-n}\hWsloc{\hf^n \hat x}\Big).
    \]
\end{enumerate}
\end{definition}

\begin{remark}
 Observe that 
  \[
  W^s(x) \subset \{y: \limsup \frac{1}{n}d(f^nx,f^ny)<0\}=\bigcup_{k=0}^{\infty} f^{-k}(f^kW^s(x)).
 \]
 \end{remark}

We record the following basic properties of stable and unstable manifolds.

\begin{theorem}[Proposition $2.3$ in \cite{pesinendo}]
  There exists an increasing countable family $\{\hat\Lambda_k\}_{k\geq 0}$ of subsets of $\hat\reg$ satisfying that 
  $\hmu(\bigcup_k \hat\Lambda_k)=1$, and such that:
 \begin{enumerate}
    \item For every $k$ there exists a continuous family of local unstable manifolds $\{\Wuloc{\hx}:\hx\in \hat \Lambda_k\}$  so that for every $\hx\in\hat\Lambda_k$ it holds:
    \begin{enumerate}
      \item $T_{x_0}\Wuloc{\hx}=E^{+}(\hx)$; in particular $E^{+}(\hx)$ depends continuously on $\hx\in\hat\Lambda_k$.
      \item There exists a sequence of $\mathcal C^{1,1}$ intervals $\{W^u(\hx,-n)\}_{n\geq 0}$ in $M$ with
      \begin{itemize}
        \item $W^u(\hx,0)=\Wuloc{\hx}$,
        \item $fW^u(\hx,-n)\supset W^u(\hx,-n+1), \forall n\geq 1$, 
        \item $W^u(\hx)=\bigcup_{n\geq 0} f^nW^u(\hx,-n)$.
      \end{itemize}
    \end{enumerate}
    \item If $\Lambda_k=\pext(\hat \Lambda_k)$, then:
    \begin{enumerate}
      \item For every $k$ there exists a continuous family of local stable manifolds $\{\Wsloc{x}:x\in \Lambda_k\}$ so that for every $x\in \Lambda_k$ it holds:
      \begin{itemize}
        \item $T_{x}\Wsloc{x}=E^{-}(x)$; in particular $E^{-}(x)$ depends continuously on $x\in\Lambda_k$.
        \item  $f\Wsloc{x} \subset \Wsloc{fx}$
      \end{itemize}
    \end{enumerate}
  \end{enumerate}
\end{theorem}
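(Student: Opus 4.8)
The plan is to deduce everything from the classical (invertible) Pesin/Perron theory applied to the cocycle $D\hf$ over the homeomorphism $\hf$, and then translate the conclusions back to $M$ and $\hTTf$ by means of $\pext\circ\hf=f\circ\pext$. Since $f$ is a $\mathcal C^2$ local diffeomorphism of the compact surface $M$, along any backward orbit $\hx=(x_0,x_1,\dots)\in\hat\reg$ there are well-defined inverse branches $g_i=(f|_{U_i})^{-1}$ carrying a neighbourhood of $x_{i-1}$ diffeomorphically onto a neighbourhood of $x_i$, and the radius of these neighbourhoods can be taken uniform in $\hx$ by compactness of $M$. Being $\hf$-regular, $\hx$ is a point where Oseledets' theorem for $D\hf$ provides the splitting $T_{x_0}M=E^{+}(\hx)\oplus E^{-}(\hx)$ on which $D_{\hx}\hf$ expands $E^{+}$ at rate $\chi^{+}_f(x_0)>0$ and contracts $E^{-}$ at rate $\chi^{-}_f(x_0)<0$, with $E^{-}$ depending only on the forward orbit of $x_0$.

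First I would fix small $\varepsilon$ (any $\varepsilon<\lambda/200$ relative to a lower bound $\lambda$ for the exponents works) and build an $\varepsilon$-adapted Lyapunov inner product $\langle\cdot,\cdot\rangle_{\hx}$ on $T_{x_0}M$ along $\hat\reg$ in which $D_{\hx}\hf$ is uniformly hyperbolic up to factors $e^{\pm\varepsilon}$ and in which the comparison with the Riemannian metric is tempered. Then I would let $\hat\Lambda_k\subset\hat\reg$ be the Pesin block where all the relevant tempered quantities — the metric comparison factor, the constants in the Oseledets splitting, and the Lyapunov-metric size of the domains of the inverse branches — are bounded by $k$; these sets increase and $\hmu\big(\bigcup_k\hat\Lambda_k\big)=1$, and after intersecting with a Lusin-continuity set for the measurable maps $\hx\mapsto E^{\pm}(\hx)$ we may assume in addition that $E^{+}$ and $E^{-}$ are continuous on each $\hat\Lambda_k$. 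On such a block one passes to Lyapunov charts $\exp_{x_0}$ of uniform (in $k$) size, in which $f$, respectively each $g_i$, becomes a small $\mathcal C^1$-perturbation of its hyperbolic linear part.

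The local manifolds are then produced by the Perron graph-transform fixed-point argument in these charts. For the unstable manifold at $\hx$ one runs the transform for the non-autonomous system $(g_i)_{i\ge 1}$ along $\hx$: since $Dg_i$ contracts $E^{+}$ and expands $E^{-}$, the local stable set of $x_0$ for this backward system is exactly a $\mathcal C^{1,1}$ graph over $E^{+}(\hx)$ tangent to $E^{+}(\hx)$ at $x_0$, which is $\Wuloc{\hx}$; its $\mathcal C^{1,1}$ regularity and the continuous dependence of the graph — hence of $E^{+}(\hx)=T_{x_0}\Wuloc{\hx}$ — on $\hx\in\hat\Lambda_k$ follow from the $\mathcal C^2$-regularity of $f$ together with the Hirsch--Pugh--Shub invariant-section theorem, tracking the Lipschitz constant of the tangent field. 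For the stable manifold at $x=\pext(\hx)$ one runs the usual forward graph transform for $(f^{(n)})_{n\ge 0}$, obtaining a $\mathcal C^{1,1}$ graph $\Wsloc{x}$ over $E^{-}(x)$ tangent to $E^{-}(x)$, which descends to $M$ because it depends only on the forward orbit; the local invariance $f\Wsloc{x}\subset\Wsloc{fx}$ is immediate from the defining inequalities \eqref{eq:localWs} once the tempered bounds are used to check that the block constants of $x$ and $fx$ are compatible. Finally the global unstable manifold is obtained by saturation: set $W^u(\hx,-n)=\Wuloc{\hf^{-n}\hx}$ (each $\hf^{-n}\hx$ lies in $\hat\reg$, hence in some block), note that the construction yields $\Wuloc{\hf^{-n+1}\hx}\subset f\big(\Wuloc{\hf^{-n}\hx}\big)$, i.e. $fW^u(\hx,-n)\supset W^u(\hx,-n+1)$, and take the increasing union $W^u(\hx)=\bigcup_{n\ge 0} f^nW^u(\hx,-n)$.

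The step I expect to be the main obstacle is the one specific to non-invertibility: making the inverse branches $g_i$ honestly well-defined on neighbourhoods that are \emph{uniform} over a Pesin block, and keeping their Lyapunov-metric distortion controlled along the whole backward orbit, so that the Perron argument for the unstable manifold closes with constants depending only on $k$. Relatedly, one must remember throughout that unstable objects genuinely live on $\hTTf$ — the chart at step $i$ sees $x_i$, so $\Wuloc{\hx}$ depends on the full sequence $\hx$, not merely on $x_0$ — whereas the stable objects descend to $M$; navigating this is the usual pitfall, and once it is handled the remainder is the standard Pesin machinery, for which we refer to \cite{pesinendo}.
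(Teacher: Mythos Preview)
Your proposal is a reasonable sketch of the standard proof, and essentially follows the approach taken in the references the paper cites. Note, however, that the paper itself does not prove this statement at all: its entire ``proof'' consists of the single line ``See \cite{pesinendo} (Proposition $2.3$) and \cite{Liu2008}.'' So there is nothing to compare against --- the theorem is quoted from the literature, not proved in the paper, and your outline of how the argument goes (Lyapunov charts on blocks, graph transform along the inverse branches for the unstable side, forward graph transform for the stable side) is exactly the content of those references.
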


\begin{proof}
See \cite{pesinendo} (Proposition $2.3$) and \cite{Liu2008}.
\end{proof}

\begin{definition}
The sets $\Lambda_k, \hat \Lambda_k$ above will be referred as \emph{Pesin blocks}.
\end{definition}

It follows in particular that $\Wuloc{\hx}$ has diameter uniformly bounded from below, for points in the same $\hat\Lambda_k$, while the local stable manifolds have sizes uniformly bounded from below, for points in the same $\Lambda_k$. Also, $f\Wsloc{x}\subset\Wsloc{fx}$; it follows that $W^s(x)$ is an immersed submanifold of $M$, and moreover these  manifolds $W^s$ (or $\hat{W}^s$) form an invariant lamination of $M$ (respectively, of $\hTTf$). The global unstable manifolds $W^u$ do not form a lamination of $M$ (they may intersect), however $\hat{W}^u$ do form an invariant lamination of $\hTTf$.

\smallskip

We will also need the following two results.

\begin{proposition}(see \cite{Liu2008})\label{pro:sabscont}
The stable lamination $W^s$ is absolutely continuous in the following sense. Given any Pesin block $\Lambda_k$, the holonomy of local stable manifolds of points in $\Lambda_k$ between any two transversals is absolutely continuous (with respect to the Lebesgue measure of the two transversals). As a consequence, given any partition of $\TT$ subordinated to the stable lamination, the disintegration of the Lebesgue measure on $\TT$ along the elements of the partition are absolutely continuous with respect to the Lebesgue measures on stable manifolds.

\end{proposition}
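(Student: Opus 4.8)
The plan is to deduce the statement from the classical Pesin absolute continuity theorem, exploiting the fact that the stable manifolds of an endomorphism depend only on the \emph{forward} orbit: their construction, and all the uniform estimates available on a Pesin block, involve only positive iterates of $f$, where $f$ is a genuine local diffeomorphism. Thus the non-invertibility of $f$ never enters, and the argument for $C^{1+\alpha}$ diffeomorphisms (as in \cite{pesinendo, Liu2008}) transfers with only cosmetic changes. Concretely, fix a Pesin block $\Lambda_k$. By the structure theorem recalled above we have on $\Lambda_k$ a continuous family of $C^{1,1}$ local stable manifolds $\{\Wsloc{x}\}$ of uniformly bounded size, with $T_x\Wsloc{x}=E^{-}(x)$ varying continuously, with $f\Wsloc{x}\subset\Wsloc{fx}$, and with uniform exponential contraction $d(f^nx,f^ny)\le C_2 e^{-n\lambda}d(x,y)$ for $y\in\Wsloc{x}$ (the Lipschitz form follows from the two inequalities in the definition together with bounded distortion along the leaf). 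Since $f\in\mathcal C^2$, $Df$ is Lipschitz; refining the countable family $\{\Lambda_k\}$ if necessary, we may also assume $x\mapsto E^-(x)$ is uniformly Hölder on each block. These are precisely the inputs needed for the diffeomorphism proof.

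\emph{Absolute continuity of the holonomy.} First localize: cover a neighbourhood of $\Lambda_k$ by finitely many foliated charts in which the stable plaques are nearly flat graphs and any transversal is nearly transverse; a general holonomy between two transversals then factors through finitely many local holonomies, so it suffices to treat two transversals $T_1,T_2$ lying in one chart and close to one another, with holonomy $h\colon T_1\to T_2$ along the stable leaves meeting $\Lambda_k$. To estimate the one-dimensional Jacobian of $h$ at a point $x\in T_1$, iterate: for each $n$, $f^n$ restricts to a diffeomorphism on the short arcs near the orbit of $x$, and
\[
h=(f^n|_{T_2})^{-1}\circ h_n\circ (f^n|_{T_1})
\]
locally near $x$, where $h_n$ is the stable holonomy between $f^nT_1$ and $f^nT_2$ near $f^nx$. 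Because the leaves contract uniformly, near $f^nx$ the curves $f^nT_1$ and $f^nT_2$ become $C^1$-close and the joining leaf segments have length $\to 0$, so $\mathrm{Jac}\,h_n\to 1$ uniformly; meanwhile $\mathrm{Jac}(f^n|_{T_1})(x)/\mathrm{Jac}(f^n|_{T_2})(h(x))$ is a telescoping product whose logarithm is dominated by $\sum_{j\ge 0} d(f^jx,f^jh(x))^{\alpha}$ plus ``angle'' terms measuring the deviation of the $Df^j$-images of the transversal directions along the two orbits. Both series converge, uniformly on $\Lambda_k$, thanks to the exponential contraction along stable leaves and the uniform Hölder/Lipschitz estimates. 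Passing to the limit yields that $h$ is absolutely continuous with an a.e.-defined Jacobian that is bounded and bounded away from $0$.

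\emph{From holonomy to disintegration.} This is the standard deduction. Given a measurable partition $\xi$ of $\TT$ subordinate to $W^s$, it suffices to prove the claim inside a foliated box $B$ of positive measure on which $\xi|_B$ refines the partition of $B$ into stable plaques. Writing Lebesgue measure on $B$ by Fubini as an integral over a transversal $T$ of measures carried by the plaques, the absolute continuity of the plaque-to-plaque holonomies identifies each such plaque measure, up to a bounded density, with arclength on the plaque; Rokhlin's disintegration theorem then identifies the conditionals $\mu^{\xi}_x$ (up to normalization) with these plaque measures, whence $\mu^{\xi}_x\ll\Leb_{W^s(x)}$. Covering $\TT$ by countably many such boxes finishes the proof.

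\emph{Main obstacle.} The only genuinely delicate point is the Jacobian estimate in the second step, and within it the control of the angle terms together with the summability of the associated series, uniformly over a Pesin block; this is the heart of Pesin's absolute continuity argument, and its endomorphism version (only forward iterates being involved) is carried out in \cite{Liu2008}, so in practice I would invoke that result directly. An equivalent alternative is to lift the entire picture to the natural extension $\hTTf$, where $\hf$ is invertible and $\hat W^s$ is a genuine lamination tangent to $(\pext)^{*}E^-$, apply the classical statement there, and project down; this avoids discussing inverse branches on $\TT$ at the cost of working in a solenoidal ambient space.
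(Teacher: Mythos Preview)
The paper does not prove this proposition at all: it is stated as a citation to \cite{Liu2008} and no argument is given. Your proposal is a reasonable sketch of the classical Pesin absolute continuity argument, correctly observing that for endomorphisms only forward iterates are involved in the construction of stable manifolds, so the diffeomorphism proof transfers; and indeed you yourself note at the end that in practice one would simply invoke \cite{Liu2008}, which is exactly what the paper does.
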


\begin{proposition}(see \cite{Liu2011})\label{pro:abscontWu}
The unstable lamination $\hat W^u$ is absolutely continuous in the following sense. Given any partition of $\hTTf$ subordinated to the Pesin unstable lamination $\hat W^u$, the disintegrations $\hmu$ along the elements of the partition are absolutely continuous with respect to the Lebesgue measure on the unstable manifolds.
\end{proposition}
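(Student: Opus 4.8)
The statement is the absolute continuity of the unstable lamination, classical in (non\nobreakdash-)uniformly hyperbolic theory, here transported to the inverse limit $\hTTf$ on which $\hf$ is invertible and the cocycle $D\hf$ is non-uniformly hyperbolic. The plan is the usual two\nobreakdash-step reduction: reduce to a statement on a single Pesin block, and then run a distortion argument using that $f$ is $\mathcal C^{1,1}$ (here even $\mathcal C^2$).

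\emph{Reduction.} Since $\hmu\big(\bigcup_k\hat\Lambda_k\big)=1$ it suffices to treat, for each $k$, a ``Pesin box'' $\mathcal B=\bigcup\{\hWuloc{\hy}:\hy\in\hat\Lambda_k\}$ over a small transversal, taken small enough that $\mathcal B$ lies in one trivializing chart $\pext^{-1}(U)\cong U\times C$ of the Cantor bundle $\pext$. Each plaque $\hWuloc{\hy}$ is connected, hence contained in a single path component $D\times\{c\}$ of the chart ($\pext$ identifying $D$ with $U$), so the partition of $\mathcal B$ into plaques refines the partition into ``horizontal discs'' $\{D\times\{c\}\}$. Using $(\pext)_*\hmu=\mu=\Leb$ together with Proposition \ref{pro:desintegracionhmu}, one writes $\hmu$ in the chart as $\int_D\delta_u\times\nu_u\,du$ with $\nu_u$ measures on $C$; disintegrating along the horizontal discs with transverse measure $\nu=\int_D\nu_u\,du$, the conditional of $\hmu$ on $D\times\{c\}$ has density $u\mapsto(d\nu_u/d\nu)(c)$ against $2$\nobreakdash-dimensional Lebesgue on $D$ (the Radon--Nikodym derivatives exist for $\nu$-a.e. $c$ and Lebesgue-a.e. $u$ since $C$ is a compact metric space), so these conditionals are absolutely continuous with respect to $\Leb$ on $D$.

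\emph{Absolute continuity of the unstable holonomy.} It then remains to disintegrate, inside a fixed disc $D\times\{c\}$, a measure absolutely continuous with respect to Lebesgue along the continuous family of $\mathcal C^{1,1}$ unstable plaques through $\hat\Lambda_k$, and to check that the conditionals are absolutely continuous with respect to arclength; composing with the disintegration over $\nu$ then gives the proposition. This reduces to showing that the holonomy between two transversals in $D$, sliding along these plaques, is absolutely continuous with Jacobian pinched between two positive constants. One proves this as in the invertible case: pull back by $\hf^{-n}$, which contracts the unstable plaques at the (non\nobreakdash-uniform) rate $\sim e^{-n\lambda}$ for $\hy\in\hat\Lambda_k$, so the pulled\nobreakdash-back holonomy becomes $\mathcal C^1$\nobreakdash-close to the identity; the $\mathcal C^{1,1}$ bound on $f$ makes the distortion of $\hf^{-n}$ along a plaque controlled by a convergent geometric series; conjugating back by $\hf^n$ and tracking Jacobians shows the holonomy Jacobian is a bounded limit.

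The hard part is precisely this last step in the \emph{non-invertible} setting: the unstable subspace $E^{+}(\hx)$ depends on the whole backward orbit $\hx$, not on $x_0$ alone, so plaque curvatures, angles between plaques and transversals, and the hyperbolicity constants must all be controlled along backward orbits staying in $\hat\Lambda_k$ --- this is exactly the machinery of \cite{Liu2008,Liu2011}. Alternatively one can bypass the holonomy argument: since $\mu$ is a smooth $f$\nobreakdash-invariant measure and $f$ is $\mathcal C^2$, Pesin's entropy formula for endomorphisms \cite{pesinendo} gives $h_\mu(f)=\int_{\TT}\chi^{+}_f\,d\mu$, and the Ledrappier--Young--type characterization of equality in this formula (see \cite{Liu2011}) states precisely that equality is equivalent to the conditionals of $\hmu$ along $\hat W^u$ being absolutely continuous with respect to Lebesgue; this is the shortest route and the one we adopt.
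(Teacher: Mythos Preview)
The paper does not give a proof of this proposition: it is simply quoted from \cite{Liu2011}, with only the clarifying remark that ``Lebesgue measure on $\hat W^u(\hx)$'' means the pushforward under $\pext^{-1}$ of arclength on $W^u(\hx)$. So there is nothing to compare your argument against in the paper itself.

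Your second route --- Pesin's entropy formula for endomorphisms plus the Ledrappier--Young type converse --- is correct and is exactly what \cite{Liu2011} establishes; in fact the paper invokes the same chain of implications later (Section~\ref{sub:bernoulli_property}) when deducing the SRB property of the smooth model $(F,\nu)$ on $L=M\times D^n$. So this is the right thing to point to.

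Your first route is a reasonable sketch, and the observation that the conditionals of $\hmu$ on the horizontal leaves $D\times\{c\}$ of a trivializing chart are absolutely continuous with respect to $2$-dimensional Lebesgue is a pleasant fact (your Radon--Nikodym argument is essentially correct once one uses that $\nu(A)=0\Rightarrow\nu_u(A)=0$ for a.e.\ $u$, together with the standard Borel structure on $C$ to get a jointly measurable version). But be aware that this reduction by itself does not shortcut the hard work: inside a single leaf $D\times\{c\}$ one still has only a \emph{lamination} by Pesin unstable curves indexed by the intersection $\hat\Lambda_k\cap(D\times\{c\})$, and the absolute continuity of its holonomy is precisely the nonuniform distortion estimate you defer to \cite{Liu2008,Liu2011}. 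Since you ultimately adopt the entropy-formula route anyway, the first half of your write-up could be trimmed to a remark.
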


Here the Lebesgue measure on $\hat W^u(\hx)$ is given by the Lebesgue measure on $W^u(\hx)$, since $\pext$ is a local homeomorphism between the two unstable manifolds.

\section{Stable ergodicity}

\subsection{Large stable manifolds}\label{sub:Large}

The goal of this subsection is to prove that almost every (Lyapunov regular) point for the examples constructed in Section \ref{sec:shears} has a large stable manifold, in terms of diameter. We keep the notations from that section: $f_t=h_t\circ E, \delta<\frac 1{10}, \mathcal G, \alpha>1, \Delta_{\alpha}^{v,h}, 0<a<b, e_v$ and $e_h$.

If $\gamma:I\subset\mathbb R\rightarrow\TT$ is a $\mathcal C^1$ curve, then $l(\gamma)$ is its (euclidean) length. Since in Section \ref{sec:shears} we did the estimates in the maximum norm, we will also consider the length of the curve in the maximum norm:
$$
l_m(\gamma):=\int_I\max\{|\gamma_1'(t)|,|\gamma_2'(t)|\}dt.
$$
We clearly have $l_m(\gamma)\leq l(\gamma)\leq \sqrt 2l_m(\gamma)$.

Let $l=\frac {\alpha}{5e_v}$.

\begin{definition}
A \textit{v}-segment is a $\mathcal C^1$ curve on $\TT$ which is tangent to the vertical cone $\Dev$, and whose length in the maximum norm is equal to $l$.
\end{definition}

\begin{remark}
Since $\alpha>1$, if $\gamma$ is a \textit{v}-segment then in fact the length of the projection of $\gamma$ on the vertical axis is $l$.
\end{remark}

\begin{lemma}
Assume that $t>\max\{\frac{2\alpha}a,\frac{4\alpha^2+\alpha e_v}{e_va}\}$. There exists $\mathcal U_2$ a $\mathcal C^1$ open neighborhood of $f_t$ such that any $f\in \mathcal U_2$ satisfies the following property: if $\gamma$ is a curve in $\TT$ such that $f(\gamma)$ is a \textit{v}-segment, then $\gamma$ contains a \textit{v}-segment.
\end{lemma}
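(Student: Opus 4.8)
The plan is to prove the property for $f=f_t$ itself, with every inequality strict and with a definite margin, and then to upgrade to a $\mathcal{C}^1$-neighborhood using the fact that any nearby map still factors through the linear cover $E$.

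\emph{Step 1 (decompose the preimage and bound its horizontal extent).} Suppose $\eta:=f_t(\gamma)$ is a $v$-segment, so $\eta$ is tangent to $\Dev$ and $l_m(\eta)=l=\tfrac{\alpha}{5e_v}$. Recalling $f_t=E\circ h_t$, put $\zeta:=h_t(\gamma)$, a connected branch of $E^{-1}(\eta)$. Since $E^{-1}\Dev\subset\Deh$ (the defining property of $\alpha$), the curve $\zeta$ is tangent to $\Deh$ \emph{everywhere}, and since $\|E^{-1}w\|\ge e_v\|w\|$ for $w\in\Dev$ we get $l_m(\zeta)\ge e_v\,l_m(\eta)=\tfrac{\alpha}{5}$. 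Being tangent to $\Deh$, the first coordinate of $\zeta$ is strictly monotone, so $\zeta$ is a graph $x_1\mapsto(x_1,\phi(x_1))$ with $|\phi'|\le\alpha$ over an arc $A\subseteq\Ss$; since $|\zeta_1'|\ge\tfrac1\alpha\|\zeta'\|$ this forces the total $x_1$-variation of $\zeta$, hence $|A|$, to be at least $\tfrac15$ (with $A=\Ss$ if $\zeta$ wraps around). Now the base of $\mathcal C$ is the union of the two arcs $I_1,I_3$ of length $2\delta$, and these are separated — through $I_2$ one way, through $I_4$ the other — by arcs of length $>\tfrac{\tau_2-2}{2\tau_2}\ge\tfrac{3}{10}>\tfrac15$. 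Hence any sub-arc of $A$ of length $\le\tfrac15$ meets at most one of $I_1,I_3$, so $A$ contains a sub-interval $J$ lying entirely in $I_2$ or in $I_4$ (so that $\zeta|_J$ lies over $\mathcal G^-$ or $\mathcal G^+$) with $|J|\ge\tfrac12(\tfrac15-2\delta)=\tfrac1{10}-\delta>\tfrac1{20}$, using $\delta<\tfrac1{4\tau_2}\le\tfrac1{20}$.

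\emph{Step 2 (the preimage over $J$ contains a $v$-segment).} On $J$ we have $\gamma=h_t^{-1}(\zeta)$, and since $h_t^{-1}$ fixes the first coordinate, $\gamma|_J$ is again a graph over $J$. Because $\zeta|_J$ is tangent to $\Deh$ and lies over $\mathcal G$, and $t>\tfrac{2\alpha}{a}$, Lemma \ref{lem:estimativasht} applies pointwise and gives that $\gamma|_J$ is tangent to $\Dev$ with pointwise expansion $>\tfrac{at-\alpha}{\alpha}$. Therefore
\[
l_m(\gamma|_J)\ >\ \frac{at-\alpha}{\alpha}\,l_m(\zeta|_J)\ \ge\ \frac{at-\alpha}{\alpha}\,|J|\ >\ \frac{at-\alpha}{20\alpha},
\]
and the hypothesis $t>\tfrac{4\alpha^2+\alpha e_v}{e_v a}$ is precisely what makes the right-hand side $\ge l=\tfrac{\alpha}{5e_v}$. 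Thus $\gamma|_J$ is a sub-arc of $\gamma$ that is tangent to $\Dev$ and has max-norm length strictly larger than $l$, hence it contains a sub-arc of max-norm length exactly $l$ — a $v$-segment. This proves the statement for $f_t$.

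\emph{Step 3 (openness).} Any $g$ in a small $\mathcal{C}^1$-neighborhood of $f_t$ is still a local diffeomorphism homotopic to $E$, hence lifts through the self-cover $E:\TT\to\TT$ to a diffeomorphism $\bar h:\TT\to\TT$ with $E\circ\bar h=g$ lying in the identity homotopy class, and $\bar h$ is $\mathcal{C}^1$-close to $h_t$ when $g$ is $\mathcal{C}^1$-close to $f_t=E\circ h_t$. Rerunning Steps 1--2 with $\bar h$ in place of $h_t$ leaves Step 1 untouched (it only involves $E$ and the geometry of $\mathcal C$, $\mathcal G$), while in Step 2 the graph $\bar h^{-1}(\zeta|_J)$, its tangent direction, and its length differ from the $h_t$-case by an amount controlled by $d_{\mathcal{C}^1}(g,f_t)$; since the inequalities used in Step 2 hold with uniform slack, they persist on a $\mathcal{C}^1$-neighborhood $\mathcal U_2$ of $f_t$, which is the required neighborhood.

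The step I expect to be the crux is the geometric bookkeeping in Step 1: one must guarantee that the preimage of a $v$-segment genuinely spends a definite fraction of its horizontal extent over the good region $\mathcal G$, which is exactly what dictates the choice $l=\tfrac{\alpha}{5e_v}$ and the smallness of $\delta$, and it is there that both constraints on $t$ enter — one to keep $\Dev$ invariant, the other so that the length gained (of order $at$) outruns the target length $l$. Step 3 is routine once the factorization $g=E\circ\bar h$ is in hand, but that factorization is essential, since it is what lets one reduce an arbitrary $\mathcal C^1$-perturbation to a $\mathcal C^1$-perturbation of the shear $h_t$.
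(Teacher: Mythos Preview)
Your proof is correct and follows essentially the same route as the paper's: factor $f=E\circ h$, pull the $v$-segment back through $E^{-1}$ to a $\Deh$-curve with horizontal projection $\ge\tfrac15$, extract a sub-arc over $\mathcal G$ of horizontal length $>\tfrac1{20}$, and then use the cone-expansion of $(Dh)^{-1}$ on $\mathcal G$ together with the lower bound on $t$ to obtain a $v$-segment; openness comes, in both arguments, from the factorization $g=E\circ\bar h$ and the $\mathcal C^1$-openness of the cone condition on the shear. Your Step~1 is in fact more carefully justified than the paper's (which simply asserts the good sub-arc from $2\delta<\tfrac1{10}$), and your Step~3 makes explicit the factorization that the paper builds directly into its definition of $\mathcal U_2$.
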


In other words, any pre-image of a \textit{v}-segment contains a \textit{v}-segment.

\begin{proof}

From Section \ref{sec:shears} we have:
\begin{enumerate}
\item
Action of $E^{-1}$: $E^{-1}\Dev\subset \Deh$ and $\conorm{E^{-1}|_{\Dev}}=e_v$ ($E^{-1}$ takes $\Dev$ strictly inside $\Deh$ expanding by more than $e_v$);
\item
Action of $Dh_t^{-1}$: if $t>\frac{2\alpha}a$ then for any $x\in\overline{\mathcal G}$ we have $D_xh_t^{-1}\Deh\subset\overline{D_xh_t^{-1}\Deh}\subset \Dev$ and $\conorm{D_xh_t^{-1}|_{\Deh}}>\frac{at-\alpha}{\alpha}$ ($D_xh_t^{-1}$ takes $\Deh$ strictly inside $\Dev$ expanding by strictly more than $\frac{at-\alpha}{\alpha}$);
\end{enumerate}

The second property is clearly $\mathcal C^1$ open, so we choose
\begin{align*}
\mathcal U_2:=&\{f=h\circ E:\ h \hbox{ is a }\mathcal C^1\hbox{ diffeomorphism of }\TT\hbox{ satisfying the above}\\ &\hbox{ property 2}\}.
\end{align*}

Let $\gamma$ be a curve in $\mathbb R^2$ such that $f(\gamma)$ is a \textit{v}-segment. Then $h(\gamma)$ is a pre-image of $f(\gamma)$ under some inverse branch of $E$, so it is a curve inside the horizontal cone $\Delta_{\alpha}^h$ such that $l_m(h(\gamma))\geq e_vl=\frac{\alpha}5$. Since $h(\gamma)$ is inside $\Deh$ we obtain that $l(\pi_1h(\gamma))\geq\frac 15$ (the length of the projection of $h(\gamma)$ on the horizontal direction is at least $\frac 15$).

Recall that the critical intervals $I_1$ and $I_3$ have sizes $2\delta<\frac 1{2\tau_2}\leq \frac 1{10}$, this implies that there exists a piece $\gamma_1$ inside $h(\gamma)$ which is inside $\mathcal G$ and has the length of the projection on the horizontal direction strictly greater than $\frac 1{20}$. Then our hypothesis implies that $h^{-1}(\gamma_1)$ will be inside the vertical cone $\Dev$ and $l_m(h^{-1}(\gamma_1))\geq\frac {at-\alpha}{20\alpha}$. Since $t>\frac{4\alpha^2+\alpha e_v}{e_va}$ we obtain that $l_m(h^{-1}(\gamma_1))\geq\frac {\alpha}{5e_v}=l$, so $h^{-1}(\gamma_1)\subset\gamma$ contains a \textit{v}-segment.
\end{proof}

\begin{lemma}
Assume that $t>\max\{\frac{2\alpha}a,\frac{4\alpha^2+\alpha e_v}{e_va}\}$. There exists $\mathcal U_3\subset\mathcal U_2$ a $\mathcal C^1$ open neighborhood of $f_t$ such that any $f\in \mathcal U_3$ satisfies the following property: For any $x\in\TT$ and any $\mathcal C^1$ curve $\gamma$ passing through $x$, there exists $n\in\N$, $y\in \TT$ and a \textit{v}-segment $\gamma'$ passing through $y$ such that $f^n(y)=x$ and $f^n(\gamma')=\gamma$ (some preimage of $\gamma$ contains a \textit{v}-segment).
\end{lemma}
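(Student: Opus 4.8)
The plan is to pull $\gamma$ back along a chain of inverse branches of $f$, keeping track at each stage of a preimage of $x$, so that after finitely many steps the pulled‑back curve is tangent to the vertical cone $\Dev$ and long enough in the maximum norm to contain a \textit{v}-segment through that preimage of $x$. First I would make two reductions: replacing $\gamma$ by a small sub-arc, we may assume $\gamma$ is as short as we wish and that $x$ is interior to it (if $x$ happens to be an endpoint the construction below simply yields a \textit{v}-segment with $y$ as an endpoint); and, $\gamma$ being short, its unit tangent field is nearly a constant direction $v_0$. The \textit{v}-segment we produce will lie over this small sub-arc, hence over $\gamma$; in the end $n$ is the number of inverse-branch steps, $y$ the tracked preimage of $x$, and $f^n(\gamma')$ a sub-arc of $\gamma$ through $x$.

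Next, a straightening step: if $v_0\in\Dev$ there is nothing to do, while if $v_0\in\Deh$ I would pick, by Lemma \ref{lem:preimagenesdd}, a preimage $y_1$ of $x$ in $\mathcal G^{*(v_0)}$; since $\gamma$ is short, the branch $\eta_1$ of $f^{-1}(\gamma)$ through $y_1$ lies in $\mathcal G^{*(v_0)}$, and Lemma \ref{lem:estimativasst}(3) makes it tangent to $\Dev$. So after at most one step we have a short curve tangent to $\Dev$ through an interior preimage of $x$. Then comes growth with truncation: inductively, given $\eta_k$ tangent to $\Dev$, contained in $\mathcal G$, through an interior preimage $y_k$ of $x$ with $d(y_k,\mathcal C)>\tfrac1{10}$ (such a deep preimage exists by the last clause of Lemma \ref{lem:preimagenesdd}), let $\eta_{k+1}'$ be the branch of $f^{-1}(\eta_k)$ through a deep preimage $y_{k+1}$ of $y_k$, and let $\eta_{k+1}$ be the maximal sub-arc of $\eta_{k+1}'$ around $y_{k+1}$ that lies in $\mathcal G$. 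Since $\eta_k\subset\mathcal G$ is tangent to $\Dev$, Lemma \ref{lem:estimativasst}(1)--(2) gives that $\eta_{k+1}'$, hence $\eta_{k+1}$, is tangent to $\Dev$, and that $l_m(\eta_{k+1}')\ge\lambda\, l_m(\eta_k)$ with $\lambda=\tfrac{e_v(at-\alpha)}{\alpha}$, which is $>1$ — in fact very large — because $t>\tfrac{2\alpha}{a}$. A curve tangent to $\Dev$ drifts in the horizontal coordinate at rate $<1/\alpha$ and $y_{k+1}$ is $(1/10)$-deep in $\mathcal G$, so $\eta_{k+1}$ contains on each side of $y_{k+1}$ the sub-arc of $\eta_{k+1}'$ of arc-length $\min\!\big(\tfrac{\alpha}{10},\,\text{distance to the nearer endpoint of }\eta_{k+1}'\big)$. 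Iterating, $l_m$ grows by a factor $\ge\lambda$ per step until it reaches the scale of the width of $\mathcal G$; at that point one obtains a curve $\rho$ tangent to $\Dev$, contained in $\mathcal G$, through an interior deep preimage $\bar y$ of $x$, with $l_m(\rho)\ge c_0$ for a fixed positive constant $c_0$ of order $\tfrac{\alpha}{10}$.

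To finish, take a deep preimage $\bar y'$ of $\bar y$ and let $\gamma'$ be the branch of $f^{-1}(\rho)$ through $\bar y'$. Since $\rho\subset\mathcal G$ is tangent to $\Dev$, Lemma \ref{lem:estimativasst}(1)--(2) gives that $\gamma'$ is tangent to $\Dev$ throughout — $\gamma'$ itself may leave $\mathcal G$, which is unavoidable since a \textit{v}-segment need not fit in $\mathcal G$ — and $l_m(\gamma')\ge\lambda\, l_m(\rho)\ge\lambda c_0$. The inequality $\lambda c_0\ge l=\tfrac{\alpha}{5e_v}$ reduces to essentially the same estimate on $t$ as in the preceding lemma (this is why the quantity $\tfrac{4\alpha^2+\alpha e_v}{ae_v}$ reappears in the hypothesis), so $l_m(\gamma')\ge l$ with $\bar y'$ interior; an elementary argument then selects a sub-arc of $\gamma'$ of $l_m$-length exactly $l$, still tangent to $\Dev$ and still containing $\bar y'$, which is the required \textit{v}-segment with $y=\bar y'$ and $n$ the total number of steps. (Alternatively, once one preimage of $\gamma$ is known to contain some \textit{v}-segment, the preceding lemma already propagates the property to all deeper preimages; the tracking of $\bar y$ is precisely what places the \textit{v}-segment over a preimage of $x$.) Finally, strict cone invariance, the expansion bounds of Lemma \ref{lem:estimativasst}, and the preimage-location statements of Lemma \ref{lem:preimagenesdd} all persist under small $\mathcal C^1$ perturbations of $f_t$ with only slightly worse constants, so the argument applies to every $f$ in a suitable $\mathcal C^1$-neighborhood $\mathcal U_3\subset\mathcal U_2$ of $f_t$.

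I expect the main obstacle to be exactly that a \textit{v}-segment is in general too long to be contained in the good region $\mathcal G$ — its $l_m$-length $l$ can exceed $\alpha\cdot(\text{width of }\mathcal G)$ — so one cannot simply keep pulling back while staying inside $\mathcal G$ until the length reaches $l$. The resolution is the growth/truncation/one-final-step scheme above: grow a short vertical curve inside $\mathcal G$ only up to the length $\mathcal G$ can hold, truncate around the tracked preimage, and then take one more inverse branch; thanks to the enormous backward expansion $\lambda$ available for large $t$, this last step overshoots $l$, and verticality survives it precisely because the curve being pulled back still lies in $\mathcal G$, where $(Df)^{-1}$ maps $\Dev$ into $\Dev$. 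The subsidiary delicate points are the constant bookkeeping needed to reach exactly the stated threshold on $t$, the use of Lemma \ref{lem:preimagenesdd} to guarantee at each stage a preimage of the tracked point deep in $\mathcal G$ (and, at the straightening step, in the correct half $\mathcal G^{*(v_0)}$), and keeping that preimage comfortably interior to the evolving curve so that the final \textit{v}-segment genuinely passes through it.
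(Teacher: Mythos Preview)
Your final step contains a genuine error. You claim that since $\rho\subset\mathcal G$ is tangent to $\Dev$, Lemma \ref{lem:estimativasst}(1)--(2) gives that $\gamma'=f^{-1}(\rho)$ is tangent to $\Dev$ throughout and satisfies $l_m(\gamma')\ge\lambda\,l_m(\rho)$. But read Lemma \ref{lem:estimativasst} again: the hypothesis ``$x\in\mathcal G$'' refers to the point at which $(D_xf)^{-1}$ is evaluated, i.e.\ the \emph{preimage} point. So the cone invariance $(D_xf)^{-1}\Dev\subset\Dev$ and the expansion factor $\lambda$ hold only at points $x\in\gamma'\cap\mathcal G$, not because $\rho\subset\mathcal G$. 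Since you yourself note that $\gamma'$ may leave $\mathcal G$, the tangent to $\gamma'$ at points in $\mathcal C$ need not lie in $\Dev$, and there the expansion is only $\ge e_v/\alpha$, not $\lambda$. So neither verticality nor the length bound $\lambda c_0$ is justified for the full curve $\gamma'$, and you cannot extract a \textit{v}-segment from it as written.

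The paper sidesteps this by dispensing with truncation altogether. Starting from a short $\Dev$-curve $\gamma_1\subset\mathcal G$, it takes successive full preimages $\gamma_n$ through deep points $x_n$. If every $\gamma_n$ stays inside $\mathcal G$, then each is tangent to $\Dev$ with exponential growth, and eventually $l_m(\gamma_n)\ge l$. Otherwise there is a first $n$ with $\gamma_n\subset\mathcal G$ but $\gamma_{n+1}\not\subset\mathcal G$; since $d(x_{n+1},\mathcal C)>\tfrac1{10}$, the portion of $\gamma_{n+1}$ inside $\mathcal G$ has horizontal projection $>\tfrac1{10}$. Now use the decomposition $f=E\circ h$: this portion equals $h^{-1}$ applied to a piece of $E^{-1}(\gamma_n)$, which is a $\Deh$-curve in $\mathcal G$ with the same horizontal projection (because $h$ preserves first coordinates). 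The expansion bound for $h^{-1}$ on $\Deh$ over $\mathcal G$ then yields $l_m>\tfrac{at-\alpha}{10\alpha}\ge l$, exactly as in the preceding lemma. Your growth/truncation/final-step scheme is salvageable by running this same exit argument on the part of $\gamma'$ that lies in $\mathcal G$, but the simpler route is to avoid truncation from the start and argue by the dichotomy ``always stays in $\mathcal G$'' versus ``first exit''.
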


\begin{proof}

We recall that  Lemma \ref{lem:preimagenesdd} tells us that $f_t$ satisfies the following two properties:
\begin{enumerate}
\item
For any $(x,v)\in T\TT$, there exists a preimage $(y,w)\in \mathcal G\times \Dev$
\item
For any $x\in\TT$, there exists a preimage $y\in\mathcal G$ such that $d(y,\mathcal C)>\frac 1{10}$.
\end{enumerate}

These are clearly $\mathcal C^1$ open properties, so we choose $\mathcal U_3\subset\mathcal U_2$ such that every $f\in\mathcal U_3$ satisfies them.

Let $x\in\TT$ and $\gamma$ a $\mathcal C^1$ curve passing through $x$, and let $v$ be the vector tangent to $\gamma$ in $x$. There exists a pre-image $x_1$ of $x$ such that $v_1=(D_{x_1}f_t)^{-1}v\in\Dev$ and $x_1\in\mathcal G$. Furthermore we can construct inductively a sequence of pre-images $(x_n)_{n\in\mathbb N}$ of $x$ such that
\begin{enumerate}
\item
$f(x_{n+1})=x_n$, $f^n(x_n)=x$;
\item
$x_n\in\mathcal G$, $d(x_n,\mathcal C)>\frac 1{10}$, for all $n\geq 2$
\end{enumerate}
Let $\gamma'_1$ be the preimage of $\gamma$ passing through $x_1$. Since the vector tangent to $\gamma'_1$ in $x_1\in\mathcal G$ is $v_1\in\Dev$, and $\gamma'_1$ is $\mathcal C^1$, we can find a curve $\gamma_1$ inside $\gamma'_1$ which contains $x_1$, is tangent to the vertical cone $\Dev$ and is inside $\mathcal G$. Taking the corresponding pre-images of $\gamma_1$ we obtain a sequence of curves $\gamma_n$.

If all $\gamma_n$ stay inside $\mathcal G$, then they will stay tangent to the vertical cone and will grow exponentially, so we eventually obtain a \textit{v}-segment.

If not, there exists $n\in\mathbb N$ such that $\gamma_{n+1}$ is not in $\mathcal G$ and $\gamma_n=f(\gamma_{n+1})$ is in $\mathcal G$. But this implies that the length of the horizontal projection of the part of $\gamma_{n+1}$ inside $\mathcal G$ is greater than $\frac 1{10}$, and the considerations from the previous lemma imply that $\gamma_{n+1}$ must contain a \textit{v}-segment. This finishes the proof.

\end{proof}

Let us consider now $f\in\mathcal U\cap\mathcal U_3$, so $f$ is $\mathcal C^{1+\epsilon}$, preserves the area and is nonuniformly hyperbolic. Let $A=\{x\in\TT:\ W^s(x) \hbox{ contains a \textit{v}-segment}\}$. The two previous lemmas show that:
\begin{itemize}
\item
$f^{-1}(A)\subset A$;
\item
$\bigcup_{n=0}^{\infty}f^n(A)$ has full Lebesgue measure.
\end{itemize}
The next lemma says that a set with these properties has full measure.

\begin{lemma}
Assume that $f$ preserves the probability measure $\mu$, and the set $A$ satisfies $f^{-1}(A)\subset A$ and $\mu(\bigcup_{n=0}^{\infty}f^n(A))=1$. Then $A$ has full $\mu$-measure.
\end{lemma}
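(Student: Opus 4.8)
The plan is to show that $\mu(f^{n}(A))=\mu(A)$ for every $n\ge 0$; since the sets $f^{n}(A)$ increase to a set of full $\mu$-measure, this forces $\mu(A)=1$.

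I would begin with two observations. First, $f^{-1}(A)\subseteq A$ is equivalent to forward invariance of the complement, $f(A^c)\subseteq A^c$: if $x\in A^c$ and $f(x)\in A$ then $x\in f^{-1}(A)\subseteq A$, absurd. Second, since $f$ preserves $\mu$ one has $\mu(f^{-1}(A))=\mu(A)$, so $N:=A\setminus f^{-1}(A)$ is $\mu$-null; moreover $f$ is a $\mathcal C^{1}$ local diffeomorphism of the compact surface $M$, hence Lipschitz, hence it carries $\mu$-null sets to $\mu$-null sets, so $\tilde N:=\bigcup_{j\ge 0}f^{j}(N)$ is $\mu$-null as well. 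I would also use that $f$ is surjective (being a local diffeomorphism of a connected compact manifold).

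The heart of the argument consists of two inclusions, valid for every $n\ge 0$: (i) $A\subseteq f^{n}(A)$, and (ii) $f^{n}(A)\cap A^c\subseteq\tilde N$. For (i) it is enough to prove $A\subseteq f(A)$ and iterate: given $x\in A$, choose $y$ with $f(y)=x$ by surjectivity; then $y\in f^{-1}(A)\subseteq A$, so $x\in f(A)$. For (ii) (the case $n=0$ being trivial), write $x=f^{n}(y)$ with $y\in A$, $x\in A^c$, and look at the orbit segment $y,f(y),\dots,f^{n}(y)=x$. It starts in $A$ and ends in $A^c$, so there is a least $k\in\{1,\dots,n\}$ with $f^{k}(y)\in A^c$; then $f^{k-1}(y)\in A$ while $f(f^{k-1}(y))=f^{k}(y)\in A^c$, i.e.\ $f^{k-1}(y)\in A\setminus f^{-1}(A)=N$, whence $x=f^{\,n-k+1}(f^{k-1}(y))\in\tilde N$. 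Combining (i) and (ii) gives $\mu(f^{n}(A))=\mu(f^{n}(A)\cap A)+\mu(f^{n}(A)\cap A^c)=\mu(A)+0=\mu(A)$.

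Finally, from $A\subseteq f(A)\subseteq f^{2}(A)\subseteq\cdots$ and continuity of measure from below, $\mu(A)=\lim_{n}\mu(f^{n}(A))=\mu\bigl(\bigcup_{n\ge 0}f^{n}(A)\bigr)=1$. The one step deserving care is the claim that $\tilde N$ is $\mu$-null, i.e.\ that $f$ maps null sets to null sets; this is automatic here because $f$ is smooth on a compact manifold, but it is genuinely needed — for a general measure-preserving map a null set may be spread onto a set of positive measure, and then the statement can fail. The rest is bookkeeping.
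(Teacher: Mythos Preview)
Your proof is correct and takes a route different from the paper's. The paper argues via Poincar\'e recurrence: it lets $B\subset A$ be the set of points of $A$ that return to $A$ infinitely often, observes $\mu(B)=\mu(A)$, and then shows directly that $\bigcup_{n\ge 0}f^n(B)\subset A$ (any forward image of a recurrent point eventually lands back in $A$, and $f^{-1}(A)\subset A$ then pulls it back into $A$). You instead track the first exit time of an orbit from $A$ and show the exit locus is null, concluding $\mu(f^n(A))=\mu(A)$ for every $n$; combined with the nesting $A\subset f(A)\subset\cdots$ (which uses surjectivity) this gives $\mu(A)=1$. Both arguments ultimately rely on $f$ carrying $\mu$-null sets to $\mu$-null sets --- you use it to control $\tilde N$, while the paper needs it to pass from $\mu(B)=\mu(A)$ to $\mu\bigl(\bigcup_n f^n(B)\bigr)=1$ --- and you are right to flag this as the one hypothesis beyond measure-preservation that is genuinely needed. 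Your approach is more hands-on and makes the roles of surjectivity and of the null-set property explicit; the paper's is a bit shorter once Poincar\'e recurrence is invoked.
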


\begin{proof}
Observe that the second condition implies that $A$ has positive measure.

Let $B\subset A$ be the set of recurrent points, $B=\{x\in A:\ \forall n\in\N,\ \exists m\in\N, m>n, f^m(x)\in A\}$. Then $\mu(B)=\mu(A)$, and therefore $\bigcup_{n=0}^{\infty}f^n(B)$ has full measure.

We claim that $\bigcup_{n=0}^{\infty}f^n(B)\subset A$. Let $x\in \bigcup_{n=0}^{\infty}f^n(B)$: then there exists $n\in\N$ such that $f^n(x)\in A$, and therefore the first property gives us that $x\in A$. This finishes the proof.
\end{proof}

Putting the three lemmas above together we obtain the following proposition.

\begin{proposition}\label{cor:largestable}
For any $t>\max\{\frac{2\alpha}a,\frac{4\alpha^2+\alpha e_v}{e_va}\}$ there exists a $\mathcal C^1$ open set of area preserving nonuniformly hyperbolic endomorphisms $\mathcal U_3\cap \mathcal U$ containing $f_t$ such that any $f\in \mathcal U_3\cap\mathcal U$ of class $C^{1+\epsilon}$ satisfies the following property: Lebesgue almost every point $x\in\TT$ has large stable manifold, in the sense that $W^s(x)$ contains a \textit{v}-segment.
\end{proposition}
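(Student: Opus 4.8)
The plan is to set $A=\{x\in\TT:\ W^s(x)\text{ contains a \textit{v}-segment}\}$ and to verify for this $A$ the two hypotheses of the preceding abstract (measure-theoretic) lemma — namely $f^{-1}(A)\subset A$ and $\mu\big(\bigcup_{n\geq 0}f^n(A)\big)=1$ — so that the lemma forces $\mu(A)=1$, which is exactly the assertion. The set $\mathcal U_3\cap\mathcal U$ is $\mathcal C^1$-open as the intersection of two $\mathcal C^1$-open sets, and every element of it is non-uniformly hyperbolic because it belongs to $\mathcal U$ (Proposition~\ref{pro:existeexpneg}); the hypothesis $f\in\mathcal C^{1+\epsilon}$ is precisely what lets us invoke Pesin theory from Section~\ref{sec:Pesintheory}, so that for $\mu$-a.e.\@ $x$ the stable set $W^s(x)$ is a genuine immersed $\mathcal C^{1,1}$ curve, $\mu$-a.e.\@ $x$ lies in some Pesin block $\Lambda_k$ and hence carries a local stable manifold $\Wsloc{x}$, and the invariance $f\Wsloc{x}\subset\Wsloc{fx}$ holds.

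For $f^{-1}(A)\subset A$: fix $x$ with $f(x)\in A$, so $W^s(f(x))$ contains a \textit{v}-segment $\gamma$. Since $f$ is a local diffeomorphism, the connected component $\gamma'$ of $f^{-1}(\gamma)$ through $x$ (shrinking $\gamma$ near $f(x)$ if necessary) is a $\mathcal C^1$ curve through $x$, and unwinding the definition $W^s(x)=\pext\big(\bigcup_n\hf^{-n}\hWsloc{\hf^n\hat x}\big)$ one checks that $\gamma'\subset W^s(x)$: a point $z\in\gamma$ satisfies $f^n(z)\in\Wsloc{f^{n+1}(x)}$ for some $n$, so its preimage $w\in\gamma'$ satisfies $f^{\,n+1}(w)\in\Wsloc{f^{n+1}(x)}$, which (matching backward orbits over $x$) places $w$ in $W^s(x)$. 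The first of the three lemmas above (valid on $\mathcal U_2\supset\mathcal U_3$) says that any curve whose $f$-image is a \textit{v}-segment contains a \textit{v}-segment; applying it to $\gamma'$ shows $W^s(x)$ contains a \textit{v}-segment, i.e.\@ $x\in A$.

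For $\mu\big(\bigcup_n f^n(A)\big)=1$: take $x$ in the full-measure set of Lyapunov-regular points possessing a local stable manifold, and apply the second lemma (valid on $\mathcal U_3$) to the $\mathcal C^1$ curve $\gamma=\Wsloc{x}$. This produces $n\in\N$, a point $y\in f^{-n}(x)$, and a curve $\gamma'\ni y$ with $f^n(\gamma')=\gamma$ that contains a \textit{v}-segment; as above $\gamma'\subset W^s(y)$ (a point on $\gamma'$ maps under $f^n$ into $\Wsloc{x}=\Wsloc{f^n(y)}$, hence lies in $W^s(y)$). Thus $y\in A$ and $x=f^n(y)\in f^n(A)$, so $\bigcup_n f^n(A)$ contains a full-measure set. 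The abstract lemma then yields $\mu(A)=1$, completing the proof.

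The step requiring the most care is the purely Pesin-theoretic bookkeeping in these two verifications: that a connected preimage — through the correctly chosen base point — of a piece of the stable manifold of $f(x)$ (resp.\@ of $f^n(y)$) is again contained in the stable manifold of $x$ (resp.\@ of $y$). Because $f$ is non-invertible, "the" stable manifold is only well defined once a backward orbit $\hat x\in\hTTf$ over $x$ is fixed, so this has to be carried out in the natural extension using $W^s(x)=\pext\big(\bigcup_n\hf^{-n}\hWsloc{\hf^n\hat x}\big)$ together with $f\Wsloc{x}\subset\Wsloc{fx}$. Once this is set up correctly, everything else is a direct concatenation of the three lemmas.
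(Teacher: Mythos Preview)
Your proposal is correct and follows exactly the paper's route: define $A=\{x:\ W^s(x)\text{ contains a \textit{v}-segment}\}$, use the first two lemmas to obtain $f^{-1}(A)\subset A$ and $\mu\big(\bigcup_{n\geq 0}f^n(A)\big)=1$, and then invoke the abstract measure-theoretic lemma; the paper simply asserts these two inclusions without further comment, while you (rightly) flesh out the Pesin-theoretic bookkeeping behind them.

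One small slip in your write-up of $f^{-1}(A)\subset A$: the \textit{v}-segment $\gamma\subset W^s(f(x))$ need not pass through $f(x)$, so ``the connected component of $f^{-1}(\gamma)$ through $x$'' may not make sense, and ``shrinking $\gamma$ near $f(x)$'' would destroy the \textit{v}-segment property (which requires fixed length $l$). The clean fix is the one you essentially have in mind: since $f$ is a local diffeomorphism, $f|_{W^s(x)}:W^s(x)\to W^s(f(x))$ is a surjective local diffeomorphism between immersed curves, so the arc $\gamma$ lifts to an arc $\gamma'\subset W^s(x)$ with $f(\gamma')=\gamma$; then the first lemma gives a \textit{v}-segment inside $\gamma'\subset W^s(x)$. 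Your treatment of the second inclusion is fine as written.
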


\color{black}

\subsection{Local ergodicity}

In this subsection we prove a general result from which both \hyperlink{theoremC}{Theorem C} and \hyperlink{theoremD}{Theorem D} can be deduced. 

\begin{proposition} \label{local_ergodicity}
	Let $f:\TT \to \TT$ be a conservative non-uniformly hyperbolic endomorphism. Suppose there exists $\delta>0$ such that the global stable manifold of almost every point has a diameter greater than $\delta$. Then $f$ is locally ergodic.
\end{proposition}

Here, a conservative map $f$ is called \emph{locally ergodic} if its ergodic components are open modulo sets of measure zero. That is, there exists a finite collection of open sets $U_i$ such that 
\begin{enumerate}
	\item $\mu_i = \frac{\mu \vert U_i}{\mu(U_i)}$ is ergodic for every $i$, and
	\item $\mu(\bigcup_{i} U_i) = 1$.
\end{enumerate}

Recall (see section \ref{sub:naturalext}) that $\hTTf$ is the natural extension of $f$, with the lift $\hf$ of $f$ and the lift $\hmu$ of $\mu$. Let $\hat\reg$ be the set of Lyapunov regular points of $\hTTf$, which has full $\hmu$ measure. The set of regular points
 $\reg=\pext(\hat\reg)$ is a full area set in $\mathbb T^2$, and $\reg=\bigcup_{k=0}^{\infty}\Lambda_k$ is the corresponding filtration into Pesin blocks.

Let $\hat\reg_0\subset\hat\reg$ be the points $\hx\in \reg$ with the property that Birkhoff averages of the Dirac measure $\delta_{\hx}$ converge to the same limit for both $\hf$ and $\hf^{-1}$. Then $\hat\reg_0$ has full $\hmu$ measure, and consists in fact of the points which are situated inside the basins of attraction (both forward and backward) of the measures which are in the ergodic decomposition of $\hmu$. Let $\hat\reg_1\subset\hat\reg$ be the points with the property that for any $\hx\in\hat\reg_1$, Lebesgue almost every point in $\hat W^u(\hx)$ is in $\hat\reg_0$. The absolute continuity of the unstable foliation \ref{pro:abscontWu} implies that $\hat\reg_1$ has full $\hmu$ measure. Both $\hat\reg_0$ and $\hat\reg_1$ are invariant under $\hat f$. Furthermore $\reg_0=\pext(\hat\reg_0)$ and $\reg_1=\pext(\hat\reg_1)$ are forward invariant under $f$ and have full Lebesgue measure on $\TT$.

\begin{definition}
An $su$-rectangle is a piecewise smooth simple closed curve in $\mathbb T^2$  consisting of two pieces of local stable manifolds and two pieces of local unstable manifolds.

An $su$-rectangle is $\mu_0$-regular if the two pieces of unstable manifolds are contained in $\hat \reg_1$ and Lebesgue almost every point in the unstable pieces are in the basin of the measure $\mu_0$.

The ergodic Pesin block $\Lambda_{k, \mu_0}$ is the intersection of $\Lambda_{k}$ with the basin of $\mu_0$.
\end{definition}

The next lemma says that if $f$ is conservative non-uniformly hyperbolic then the ergodic components of positive Lebesgue measure cover all the manifold. It is a well-known result and it goes back to Pesin (at least for diffeomorphisms); we include a proof for the convenience of the reader. Compare with \cite{Pugh1989}.

\begin{lemma}\label{le:blocks}
Suppose that the Lebesgue measure is hyperbolic, then Lebesgue almost every point in $\TT$ belongs to an ergodic Pesin block of positive Lebesgue measure.
\end{lemma}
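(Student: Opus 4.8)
The plan is to use the standard Pesin–Hopf machinery adapted to the endomorphism setting via the natural extension. First I would fix a Pesin block $\Lambda_k$ of positive Lebesgue measure and a density point $x_0$ of $\Lambda_k$ (with respect to $\mu$); choosing $k$ large enough, $\mu(\Lambda_k)$ can be taken arbitrarily close to $1$, so it suffices to prove that $\mu$-a.e.\@ point of $\Lambda_k$ lies in an ergodic component of positive measure, and then let $k\to\infty$. Pick $\hx_0\in\hat\Lambda_k$ with $\pext(\hx_0)=x_0$. Near $x_0$ the local stable manifolds $\{\Wsloc{x}:x\in\Lambda_k\}$ form a continuous family of $\mathcal C^{1,1}$ curves of size uniformly bounded below, all tangent to $E^-$, and likewise the local unstable manifolds $\{\Wuloc{\hx}:\hx\in\hat\Lambda_k\}$; since $E^-$ and $E^+$ are transverse on $\hat\Lambda_k$, in a small neighborhood $V$ of $x_0$ these two families cross to produce a lamination box, i.e.\@ a product structure on $\Lambda_k\cap V$.

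The core of the argument is the classical Hopf chain together with the two absolute continuity statements (the Proposition quoted from \cite{Liu2008} for the stable lamination, and Proposition \ref{pro:abscontWu} for the unstable one in $\hTTf$). First I would observe that by the Birkhoff ergodic theorem there is a full-measure set of points whose forward Birkhoff averages (for every continuous observable) converge; call this set of ``stable-regular'' points. By the absolute continuity of $W^s$, for Lebesgue-a.e.\@ $x\in\Lambda_k\cap V$ the local stable manifold $\Wsloc{x}$ meets this set in a full (one-dimensional Lebesgue) subset, and any two points on the same $\Wsloc{x}$ have the same forward averages, hence the forward Birkhoff average function $x\mapsto \varphi^+(x)$ is (mod zero) constant along local stable manifolds in the box. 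Dually, lifting to $\hTTf$ and using Proposition \ref{pro:abscontWu}, the backward Birkhoff average $\hx\mapsto\varphi^-(\hx)$ is constant along local unstable manifolds $\hWuloc{\hx}$; since for $\hmu$-a.e.\@ point the forward and backward averages agree (this is the set $\hat\reg_0$ introduced just above the lemma, or equivalently a direct consequence of Birkhoff applied to $\hf$ and $\hf^{-1}$ and $(\pext)_*\hmu=\mu$), the average function descends to $\TT$ and is constant along both laminations in the box. Sliding along the product structure (stable holonomy followed by unstable holonomy, each absolutely continuous) then shows $\varphi^+$ is (mod zero) constant on the whole box $\Lambda_k\cap V$; as this holds for every continuous $\varphi$, the set of points in $\Lambda_k\cap V$ with a fixed average value has positive measure and is (mod zero) $f$-invariant, so it is contained in a single ergodic component of positive Lebesgue measure. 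Finally, covering $\Lambda_k$ by countably many such boxes and letting $k\to\infty$ gives the conclusion.

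The one genuinely non-routine point — the ``main obstacle'' — is the asymmetry between stable and unstable objects in the non-invertible setting: the unstable manifolds $W^u$ do \emph{not} form a lamination of $\TT$ (they may intersect), so the unstable holonomy and the ``constant along unstables'' step must be carried out upstairs in $\hTTf$, where $\hat W^u$ \emph{is} a lamination and Proposition \ref{pro:abscontWu} applies, and only then pushed down by $\pext$. One must check that the lamination-box product structure survives this passage — i.e.\@ that the box $V\subset\TT$ lifts to a box $\hat V\subset\hTTf$ on which both $\hat W^s$ and $\hat W^u$ give a genuine product structure, using that $\pext$ is a local homeomorphism on unstable leaves and that $W^s$ is already defined downstairs — and that the two absolute continuity statements are compatible so that the Hopf sliding argument closes up. Modulo this bookkeeping, everything else is the textbook Pesin ergodicity argument (cf.\@ \cite{Pugh1989}), and I would simply cite it for the routine parts.
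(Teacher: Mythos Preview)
Your approach is correct in outline and would work, but the paper's proof is more economical and avoids the full product-box construction you propose. Instead of building a lamination box and sliding along both holonomies, the paper selects, for each density point $x\in\Lambda_k'\cap\reg_1$, a \emph{single} local unstable manifold $\Wuloc{\hx}$ (coming from some $\hx\in\hat\reg_1$ over $x$) to serve as a transversal. By the very definition of $\hat\reg_1$, Lebesgue-a.e.\@ point on this one unstable curve is already in $\hat\reg_0$, hence in the basin of one fixed ergodic component $\mu_0$. Then only the \emph{stable} holonomy is invoked: the local stable manifolds of nearby points $y\in B(x,r)\cap\Lambda_k$ all cut $\Wuloc{\hx}$ transversely, the intersection set has positive one-dimensional Lebesgue measure on $\Wuloc{\hx}$ by stable holonomy absolute continuity, and since the basin of $\mu_0$ is $W^s$-saturated, Lebesgue-a.e.\@ such $y$ lies in that basin. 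Hence $\Lambda_{k,\mu_0}$ has positive measure and $x$ is a density point of it.

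This completely sidesteps the obstacle you flag: one never needs unstable holonomy, nor a genuine product structure upstairs in $\hTTf$, because the role of the unstable lamination has been packaged in advance into the full-measure set $\hat\reg_1$ (using Proposition \ref{pro:abscontWu}, which gives absolute continuity of \emph{disintegrations} along $\hat W^u$, not of the unstable holonomy). Your phrase ``stable holonomy followed by unstable holonomy, each absolutely continuous'' therefore slightly overreaches what is actually established in the paper---only disintegration AC along unstables is stated---but with the single-transversal trick you need only one holonomy anyway, so the issue evaporates rather than needing to be resolved.
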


\begin{proof}
We have that
\[
\mu\left(\bigcup_{\mu(\Lambda_k)>0}\Lambda_k\right)=1.
\]

Assume that $\Lambda_k$ has positive Lebesgue measure, and let $\Lambda'_k$ be the points of $\Lambda_k$ which are Lebesgue density points of $\Lambda_k$; then $\mu(\Lambda'_k)=\mu(\Lambda_k)$. We will show in fact that all the points in $\bigcup_{\mu(\Lambda_k)>0}\left(\Lambda_k'\cap\Lambda_k\cap\reg_1\right)$ satisfy the conclusion of the Lemma.

Let $x\in\Lambda_k'\cap\Lambda_k\cap\reg_1$. Then there exists $\hx\in\hat\reg_1$ such that $\pext(\hx)=x$. We have that $W^u_{loc}(\hx)$ is transversal to $W^s_{loc}(x)$, so for some small $r>0$ and for every $y\in B(x,r)\cap\Lambda_k$ we have that $W^u_{loc}(\hx)$ is transverse to $W^s_{loc}(y)$.

Since $\mu(B(x,r)\cap\Lambda_k)>0$ and the local stable lamination is absolutely continuous, we have that the Lebesgue measure (along $W^u_{loc}(\hx)$) of $\bigcup_{y\in B(x,r)\cap\Lambda_k}\left(W^u_{loc}(\hx)\cap W^s_{loc}(y)\right)$ is positive.

Since $\hx\in\hat\reg_1$ we have that Lebesgue almost every point in $W^u_{loc}(\hx)$ is in the basin of the same ergodic component $\mu_0$ of $\mu$. Since the basin of $\mu_0$ is saturated by stable manifolds, then in must contain Lebesgue almost every point in $B(x,r)\cap\Lambda_k$, so we get that $\Lambda_{k,\mu_0}$ has positive Lebesgue measure and $x$ is a density point of $\Lambda_{k,\mu_0}$.

Now, in order to finish the proof, we just observe that $$\bigcup_{\mu(\Lambda_k)>0}\left(\Lambda_k'\cap\Lambda_k\cap\reg_1\right)$$ has full Lebesgue measure on $\TT$.
\end{proof}

\begin{remark}
We have that the ergodic components of $\mu$ with positive Lebesgue measure cover almost all the torus. Furthermore, if $\mu_0$ is an ergodic component of $\mu$ with positive Lebesgue measure, then the basin of $\mu_0$ contains (and is in fact equal to, modulo zero measure) $\bigcup_{k>0}\bigcup_{x\in\Lambda_{k,\mu_0}}W^s(x)$.
\end{remark} 

\begin{lemma}\label{le:rectangles}
Suppose that the Lebesgue measure is hyperbolic, then Lebesgue almost every point in $\TT$ is in the interior of an arbitrarily small regular $su$-rectangle.
\end{lemma}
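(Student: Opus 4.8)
The plan is to construct the rectangle around a typical point $x$ by using its local stable and unstable manifolds as a reference cross and then sliding each branch slightly to one of the four sides with the help of nearby Pesin points, arranged so that the two unstable sides lie on local unstable manifolds carrying the same ergodic component $\mu_0$ as $x$.

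I would start by fixing, as in the proof of Lemma~\ref{le:blocks}, a point $x$ in a full Lebesgue measure set such that, for some index $k$: $x$ is a Lebesgue density point of $\Lambda_k$ and of the ergodic Pesin block $\Lambda_{k,\mu_0}$, where $\mu_0$ is the ergodic component whose basin contains $x$; and $x$ has a lift $\hx\in\hat\reg_1\cap\hat\Lambda_k$. Since $\hx\in\hat\reg_1\subset\hat\reg_0$, its forward and backward Birkhoff limits agree and project to $\mu_0$, so the backward limit of $\hx$ is the ergodic lift $\hmu_0$ of $\mu_0$. Using that $\hat\reg_1$, the Pesin blocks, and the backward basin $\hat B^-$ of $\hmu_0$ are linked through the Fubini formula $\hmu=\int\hmu_x\,d\mu$ of Proposition~\ref{pro:desintegracionhmu}, I may also enlarge $k$ so that $\hx$ is a $\hmu$-density point of $\hat G_k:=\hat\Lambda_k\cap\hat\reg_1\cap\hat B^-$, keeping $x$ in a full measure set. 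Recall that on $\hat\Lambda_k$ the bundle $E^+$ and the local unstable manifolds vary continuously and have length bounded below by some $\ell_k>0$, and similarly $E^-$ and the local stable manifolds on $\Lambda_k$; and that $\Wuloc{\hx}$ and $\Wsloc{x}$ cross transversely at $x$, each branch confined to a thin cone about the direction of $E^+(\hx)$, respectively $E^-(x)$.

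Next I would pick the four sides. Fix $\varepsilon>0$ and let $\rho>0$ be small, depending on $k$ and $\varepsilon$. Because $x$ is a density point of $\Lambda_k$, choose $w_1,w_2\in\Lambda_k$ within $\rho$ of $x$ and on opposite sides of the curve $\Wsloc{x}$; their local stable manifolds $\Wsloc{w_1},\Wsloc{w_2}$ will be the stable sides, with tangent directions $O(\rho)$-close to $E^-(x)$ by continuity on $\Lambda_k$. Because $\hx$ is a $\hmu$-density point of $\hat G_k$ and $\hmu$ disintegrates over $\mu$ along $\pext$-fibres, the $\pext$-image of $\hat G_k$ restricted to a $\rho$-ball around $\hx$ has Lebesgue density $1$ at $x$; pick $z_1,z_2$ in it, within $\rho$ of $x$ and on opposite sides of the curve $\Wuloc{\hx}$, together with lifts $\hat z_1,\hat z_2\in\hat G_k$ within $\rho$ of $\hx$ in $\hTTf$. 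Then $\Wuloc{\hat z_1},\Wuloc{\hat z_2}$ will be the unstable sides, with tangent directions $O(\rho)$-close to $E^+(\hx)$ by continuity on $\hat\Lambda_k$. If $\rho$ is small compared with $\ell_k$ and with the transversality angle, each of the four pairs formed by one $\Wsloc{w_i}$ and one $\Wuloc{\hat z_j}$ meets in exactly one point $c_{ij}$, in the interior of both arcs and within $O(\rho)$ of $x$; the four points $c_{ij}$ bound a topological quadrilateral $R$ whose sides are two sub-arcs of $\Wsloc{w_1},\Wsloc{w_2}$ and two sub-arcs of $\Wuloc{\hat z_1},\Wuloc{\hat z_2}$, that is, an $su$-rectangle. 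Since $w_1,w_2$ lie on opposite sides of $\Wsloc{x}$ and $z_1,z_2$ on opposite sides of $\Wuloc{\hx}$, the point $x$ lies in the interior of $R$; and since all corners and connecting sub-arcs lie within $O(\rho)$ of $x$, we get $\operatorname{diam}(R)<\varepsilon$ once $\rho$ is small enough.

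Finally I would check that $R$ is $\mu_0$-regular. Its unstable sides are sub-arcs of $\Wuloc{\hat z_1},\Wuloc{\hat z_2}$ with $\hat z_i\in\hat\reg_1$; since $\hat z_i\in\hat\reg_0$ has backward limit $\hmu_0$, it has forward Birkhoff limit $\hmu_0$ too. By absolute continuity of the unstable lamination (Proposition~\ref{pro:abscontWu}), arc-length almost every point of $\hWuloc{\hat z_i}$ lies in $\hat\reg_0$; lying on the same unstable leaf as $\hat z_i$ such a point has backward limit $\hmu_0$, hence forward Birkhoff limit $\hmu_0$; projecting by the homeomorphism $\pext$, which preserves arc-length null sets, shows that Lebesgue almost every point of the unstable side lies in the basin of $\mu_0$. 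Hence $R$ is a $\mu_0$-regular $su$-rectangle of diameter $<\varepsilon$ containing $x$ in its interior; as $x$ ranges over a full measure set and $\varepsilon>0$ is arbitrary, the lemma follows. The crux is the choice of $z_1,z_2$: one needs nearby Pesin points whose lifts lie simultaneously in $\hat\Lambda_k$ (so the local unstable manifolds have definite size and direction close to $E^+(\hx)$), in $\hat\reg_1$ (so the unstable sides are $u$-regular), and in the backward basin of $\hmu_0$ (so that $R$ is regular for the \emph{same} $\mu_0$ as the density block of $x$, which is what the Hopf argument in the sequel requires); securing all three at once is where the passage to the natural extension genuinely complicates matters, everything else being elementary geometry in Pesin charts.
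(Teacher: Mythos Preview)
Your argument is essentially correct, but it follows a genuinely different path from the paper's.

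The paper does not work with density points in the solenoid at all. Instead it introduces the set $\reg_2=\{x\in\reg:\mu_s(\Wsloc{x}\setminus\reg_1)=0\}$, which has full Lebesgue measure by absolute continuity of the stable lamination. For $x\in\Lambda_k'\cap\reg_2$ one then picks the points $y,z$ that will carry the unstable sides \emph{on the local stable manifold of $x$ itself}, one on each side of $x$. Any lifts $\hat y,\hat z\in\hat\reg_1$ give local unstable manifolds transverse to $\Wsloc{x}$ at $y,z$; the stable sides then come from nearby $\Lambda_k$-points exactly as in your construction. The fact that both unstable sides see the \emph{same} component $\mu_0$ is obtained a posteriori: a positive-measure packet of stable leaves from $\Lambda_k\cap B(x,r)$ crosses both $\Wuloc{\hat y}$ and $\Wuloc{\hat z}$, so by absolute continuity a positive arc-length set on each unstable side lies in the same basin, and membership in $\hat\reg_1$ upgrades this to full arc-length.

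Your approach instead bakes $\mu_0$-regularity into the choice of $z_1,z_2$ by working with $\hat G_k=\hat\Lambda_k\cap\hat\reg_1\cap\hat B^-$ and a density point of this set at $\hat x$ in $\hTTf$. This is more direct and avoids the extra Hopf-type step, but it trades it for a Lebesgue density theorem for $\hmu$ on the solenoid, which you invoke without comment. This is the one point that deserves a line of justification: it is not covered by the Euclidean density theorem, but it does follow from the local product structure $\hTTf\simeq D^2\times\Sigma$ described in the Appendix (or, equivalently, from martingale convergence with respect to the refining cylinder partitions). Note also that you genuinely need $\hat z_i$ close to $\hat x$ in the solenoid topology (not just $z_i$ close to $x$) to get $E^+(\hat z_i)$ close to $E^+(\hat x)$, since $E^+$ depends on the full past; you do ask for this, so the argument goes through once the density theorem is in place. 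The paper's route sidesteps this issue entirely by never leaving $\TT$ for the density arguments, at the cost of the separate identification of $\mu_0$.
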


\begin{proof}
We consider the regular points with the property that almost all the points on the local stable manifold are in $\reg_1$:
\[
\reg_2=\{x\in\reg:\ \Leb_{W^s_{loc}(x)}(W^s_{loc}(x)\setminus\reg_1)=0\}
\]
where $\Leb_{W^s_{loc}(x)}$ denotes the induced Lebesgue measure on the submanifold $\Wsloc{x}$. Since the stable manifolds form an absolutely continuous lamination, the conditional measures of $\mu$ with respect to any measurable partition subordinated to the stable lamination are equivalent to the corresponding induced Lebesgue measure (see Proposition \ref{pro:sabscont}), hence we have that $\reg_2$ has full Lebesgue measure. Let $x\in\Lambda_k'\cap\Lambda_k\cap\reg_2$, for some $k>0$ with $\mu(\Lambda_k)>0.$ 

Since $x\in\reg_2$, there exist $y,z\in W^s_{loc}(x)\cap\reg_1$ such that $y$ and $z$ are arbitrarily close to $x$ and $x$ is between $y$ and $z$. There exist $\hat y,\hat z\in\hat\reg_1$ such that $y=\pext(\hat y), z=\pext(\hat z)$. Furthermore $\Wuloc{\hat y}$ and $\Wuloc{\hat z}$ are transverse to $\Wsloc{x}$.

This implies that for all sufficiently small $r>0$ we have that for all $x'\in\Lambda_k\cap B(x,r)$, $\Wsloc{x'}$ is transverse to $\Wuloc{\hat y}$ and $\Wuloc{\hat z}$. We know that $x$ is a density point of $\Lambda_k$, and $\Lambda_k\cap B(x,r)$ has positive measure, so we have a positive measure set of stable manifolds inside the Pesin block $\Lambda_k$ cutting transversely both $\Wuloc{\hat y}$ and $\Wuloc{\hat z}$. The stable lamination is absolutely continuous, and $\hat y,\hat z\in\reg_1$, so we conclude that both $\hat y$ and $\hat z$ are in the basin of the same ergodic component $\mu_0$ of $\mu$.

Since $x$ is a density point of $\Lambda_k$, we can indeed construct an arbitrarily small $su$-rectangle with $x$ in its interior and the unstable pieces in $\Wuloc{\hat y}$ and $\Wuloc{\hat z}$. In consequence, $x$ is in the interior of an arbitrarily small regular $su$-rectangle.

In order to conclude the proof we remark that $\bigcup_{k>0}(\Lambda_k'\cap\Lambda_k\cap\reg_2)$ has full Lebesgue measure on $\TT$.
\end{proof} \color{black}

\begin{lemma}\label{le:interior}
Suppose that the Lebesgue measure is hyperbolic and almost every stable manifold has diameter larger than $\delta>0$. Given a $\mu_0$-regular $su$-rectangle, with diameter smaller than $\delta$, then Lebesgue almost every point in the interior of it is also inside the basin of $\mu_0$.
\end{lemma}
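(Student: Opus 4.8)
\emph{Proof proposal.} The plan is to combine the classical Hopf argument with the stable lamination, exploiting the lower bound on the diameters of stable manifolds to force a typical stable leaf through a point of the rectangle to exit through its unstable boundary, where the dynamics is already known to be $\mu_0$-typical. Fix the closed region $R$ bounded by the given $su$-rectangle, so that $\operatorname{diam}(R)<\delta$. Write $\partial R=U_1\cup S_1\cup U_2\cup S_2$, where $U_1\subset\Wuloc{\hat y}$ and $U_2\subset\Wuloc{\hat z}$ are its unstable sides and $S_1,S_2$ its stable sides, and let $L_i$ be the stable leaf containing $S_i$ ($i=1,2$). By the $\mu_0$-regularity of the rectangle, the set $\mathcal B\subset U_1\cup U_2$ of points \emph{not} lying in the basin of $\mu_0$ has zero one-dimensional Lebesgue measure.

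I would set
\[
N=\Big(\bigcup_{y\in\mathcal B}W^s(y)\Big)\cup L_1\cup L_2\cup\{x\in\TT:\operatorname{diam}W^s(x)\le\delta\}
\]
and first argue that $N$ is Lebesgue null (see below). Granting this, take $x$ in the interior of $R$ with $x\notin N$; such $x$ form a full Lebesgue measure subset of $R$. Since the stable manifolds form a lamination of $\TT$, $W^s(x)$ is a connected curve equal to the unique stable leaf through $x$, and since $x\notin L_1\cup L_2$ it is disjoint from $S_1\cup S_2$. On the other hand $\operatorname{diam}W^s(x)>\delta>\operatorname{diam}(R)$, so $W^s(x)\not\subset R$; being connected and meeting both the open set $\operatorname{int}(R)$ (at $x$) and the open set $\TT\setminus R$, the curve $W^s(x)$ must meet $\partial R$, and as it avoids $S_1\cup S_2$ it meets $U_1\cup U_2$ at some point $y$. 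Moreover $y\notin\mathcal B$, for otherwise $x\in W^s(y)\subset N$, contradicting the choice of $x$. Hence $y$ lies in the basin of $\mu_0$; since $y\in W^s(x)$ we have $d(f^nx,f^ny)\to 0$ exponentially, so $\frac1n\sum_{k=0}^{n-1}\varphi(f^kx)$ and $\frac1n\sum_{k=0}^{n-1}\varphi(f^ky)$ share the same limit for every $\varphi\in C(\TT,\R)$, and therefore $x$ belongs to the basin of $\mu_0$. As this holds for Lebesgue-a.e.\ $x\in R$, the lemma follows.

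It remains to check that $N$ is Lebesgue null, which is the only nontrivial point. The set $\{x:\operatorname{diam}W^s(x)\le\delta\}$ is null by hypothesis and $L_1,L_2$ are curves, hence null; the real work is to show $\bigcup_{y\in\mathcal B}W^s(y)$ is null, i.e.\ that a set which is null along an unstable transversal remains null after saturation by the global stable lamination. This follows from the absolute continuity properties recalled in Section \ref{sec:Pesintheory} (see \cite{Liu2008}): on each Pesin block $\Lambda_k$ the local stable manifolds form a continuous family with absolutely continuous holonomies and with Lebesgue disintegrating absolutely continuously along them, so the saturation by \emph{local} stable manifolds of the null set $\mathcal B\cap\Lambda_k$ (a null subset of the transversals $U_1,U_2$, recalling that a.e.\ point of $U_1\cup U_2$ is regular and hence lies in some $\Lambda_k$) has zero two-dimensional Lebesgue measure; summing over $k$, and then over the countably many iterates involved in passing from local to global stable manifolds in the definition of $W^s(y)$ — each iterate of $f$ and its inverse branches preserving the class of Lebesgue-null sets since $f$ is a local diffeomorphism — yields that $\bigcup_{y\in\mathcal B}W^s(y)$ is Lebesgue null. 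The main obstacle is precisely this bookkeeping with the absolute-continuity estimates over the Pesin blocks and the forward iterates; the topological step (a Jordan-curve and connectedness argument) and the ergodic-theoretic step (Hopf's argument) in the previous paragraph are routine.
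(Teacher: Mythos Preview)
Your proof is correct and takes a genuinely different route from the paper's. You argue \emph{directly}: for a full-measure set of $x$ in the interior of $R$, the large stable leaf $W^s(x)$ must exit $R$ through the unstable sides (the connectedness/Jordan-curve step), it cannot touch the stable sides $S_1,S_2$ because stable leaves of regular points partition, and the exit point $y$ is $\mu_0$-typical because you have excised the stable saturation of the null set $\mathcal B\subset U_1\cup U_2$. The crux is your claim that this global stable saturation is Lebesgue null, which you obtain from absolute continuity of the stable lamination plus Fubini over a smooth auxiliary foliation of transversals, summed over Pesin blocks and over iterates (to pass from local to global leaves). This can indeed be made rigorous along the lines you sketch; the only delicate part, as you correctly flag, is the bookkeeping: one must argue block by block that holonomy from $U_1$ to each nearby smooth transversal kills $\mathcal B\cap\Lambda_k$, then Fubini gives local null saturation, and finally use that $f$ and its inverse branches preserve null sets to globalize.

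The paper, by contrast, argues by \emph{contradiction}. Assuming a positive-measure subset $B$ of the interior lies in the basin of some $\mu_1\neq\mu_0$, one picks a recurrent density point $x\in B$ whose stable manifold (being large) crosses $W^u_1$ at some point $y$, topologically transversally. Iterating forward brings this to a \emph{local} stable crossing of $f^n(W^u_1)$ at a density point of $B$; then an argument from \cite{Hertz2011} (extend the local stable lamination to a $C^1$ foliation in a chart, apply Sard's theorem to the projection of $f^n(W^u_1)$ onto a smooth transversal) shows that a positive 1D-measure subset of $f^n(W^u_1)$ meets local stable leaves of $B$, hence lies in the basin of $\mu_1$ --- contradicting $\mu_0$-regularity. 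Compared with yours, the paper's route uses absolute continuity in the ``positive $\Rightarrow$ positive'' direction and absorbs the issue of merely topological transversality via forward iteration and Sard; your route is conceptually cleaner and avoids both the iteration and the \cite{Hertz2011} machinery, at the cost of needing the ``null on a transversal $\Rightarrow$ null 2D saturation'' form of absolute continuity for \emph{global} Pesin stable leaves.
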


\begin{proof}
Let $R=W^s_1\cup W^u_1\cup W^s_2\cup W^u_2$ be a regular $su$-rectangle of diameter smaller than $\delta$, formed by the stable segments $W^s_1$ and $W^s_2$ and the unstable segments $W^u_1$ and $W^u_2$. Let $V$ be the interior of $R$.

From Lemma \ref{le:blocks} we know that we can cover almost all $V$ with ergodic Pesin blocks of positive measure, so what we have to prove is that all these blocks correspond to the same ergodic component $\mu_0$ of $R$.

Suppose that this is not true: then there exists an ergodic component $\mu_1$ of $\mu$ different than $\mu_0$ such that $B=\Lambda_{k,\mu_1}\cap V$ has positive Lebesgue measure. Let $B'$ be the set of Lebesgue density points of $B$ and $\mathrm{Rec}(B\cap B')$ the recurrent points of $B\cap B'$, i.e. the points coming back to $B\cap B'$ infinitely many times. Then $\mathrm{Rec}(B\cap B')$ has full measure in $B$, so we can find a point $x\in \mathrm{Rec}(B\cap B')$ with the property that the diameter of its stable manifold is larger than $\delta$. This implies that $W^s(x)$ intersects one of the unstable pieces of the $su$-rectangle, say $W^u_1$. Let $y\in W^s(x)\cap W^u_1$ be a point where the intersection is topologically transversal.

The point $x$ is recurrent to $B\cap B'$, and the piece of stable manifold between $x$ and $y$ is contracting under iterates of $f$. Then there exists $n\geq 0$ such that $f^n(x)\in B\cap B'$ and $f^n(y)\in W^s_{loc}(f^n(x))$. Furthermore the intersection between $f^n(W^u_1)$ and $\Wsloc{f^n(x})$ is topologically transversal.

In conclusion $f^n(W^u_1)$ is a $\mathcal C^1$ curve which crosses transversely a local stable manifold of a density point in $B=\Lambda_{k,\mu_1}\cap V$. We are now in the condition to apply the same argument of Proposition $5.1$ in \cite{Hertz2011} for the lamination of local stable manifolds of points in $B$ and the curve $f^n(W^u_1)$ and from there we obtain that there exists a Lebesgue positive set of points in $f^n(W^u_1)$ which intersect this local stable lamination.

Let us explain briefly this argument. The lamination of stable manifolds of points in $B$ can be extended locally to a $\mathcal C^1$ foliation box. We can choose a smooth transversal $T$ to the foliation, and sliding along the holonomy we can define a map from a piece of $f^n(W^u_1)$ to $T$. The map is $\mathcal C^1$ and Sard's Theorem implies that the critical values have zero Lebesgue measure. Critical values correspond to tangencies between $f^n(W^u_1)$ and the foliation, so there is a full measure set of leaves which are transverse to $f^n(W^u_1)$ in a small neighborhood of the density point. Because we have a density point of the lamination and the foliation is absolutely continuous, then we obtain a positive set of leaves of the lamination (local stable manifolds) which will intersect $f^n(W^u_1)$ in a set of positive Lebesgue measure.
 
But this means that a Lebesgue positive set of points in $f^n(W^u_1)$ are in the basin of $\mu_1$, and this is a contradiction because we know from hypothesis that Lebesgue almost all the points in $W^u_1$ are in the basin of $\mu_0$. This finishes the proof.
\end{proof}

\begin{proof}[Proof of Proposition \ref{local_ergodicity}]
	Let $\mathbf{R}$ be the collection of all regular $su$-rectangles. Let $\sim$ be the equivalence relation on $\mathbf{R}$ defined by $R_1 \sim R_2$ iff $R_1 \cap R_2$ has non-empty interior. Denote by $\{\mathbf{R}_i\}_{i \in I}$ the equivalence classes of $\mathbf{R}$ and write
	\[ U_i = \bigcup_{R \in \mathbf{R}_i} R.\]
	Then $\{U_i\}_{i \in I}$ is a family of non-empty pairwise disjoint open set, so there are at most countably many of them. By Lemma \ref{le:rectangles}, $\mu(\bigcup_{i \in I} U_i) = 1$, and by Lemma \ref{le:interior}, the normalized restriction of $\mu$ to each $U_i$ is ergodic.
\end{proof}

\subsection{Bernoulli property} 
\label{sub:bernoulli_property}

Now we will explain how to deduce a stronger property than ergodicity for the maps considered in \hyperlink{theoremD}{Theorem D}, namely, the Bernoulli property. 

\begin{definition}
Let $f:M\to M$ be an endomorphism preserving a measure $\mu$, and consider the corresponding lifts $\hat f:\hTTf\to\hTTf$ and $\hat \mu$. We say that $(f,\mu)$ is Bernoulli if $(\hat f,\hat \mu)$ is metrically isomorphic to a Bernoulli process.
\end{definition}

For the rest of this part we fix $f$ as in the hypothesis of \hyperlink{theoremD}{Theorem D}: in particular, it is $C^2$, NUH and ergodic for the area. To take advantage of previous technology for diffeomorphisms, we will use the existence of a smooth model for $\hTTf$; that is, there exists a smooth manifold $L=M\times D^n$, where $D^n$ is a $n$-dimensional disk for some $n>0$, and a $C^2$ skew-product diffeomorphism $F:L\to L$ over $f$ so that there exists an attractor $\Lambda=\bigcap_{n\in\mathbb Z} F^n(L)$, with $F|\Lambda$ topologically conjugate to $\hat f$. See for example Appendix A in \cite{expcontendoVianaYang}. One can make the construction in our case so that $\dim L=5$. Let $\nu$ be the corresponding measure to $\hmu$, which projects to $\mu$ on $\TT$; note that $\mathrm{supp }(\nu) = \Lambda$.

Because $L$ is a contraction on the fibers $\{x\}\times D^n$, one obtains that the Lyapunov exponents of $F$ for $\nu$ are exactly $\lambda^+(f),\lambda^-(f)$ and other $n$ negative exponents. Since the Pesin formula holds for the endomorphism $f$ and $\mu$ \cite{pesinendo}, and $h_{\mu}(f)=h_{\hat\mu}(\hat f)=h_{\nu}(F)$, we obtain that the Pesin formula also holds for $F$ and $\nu$. Then classical results of Ledrappier-Young \cite{LedYoungI} tell us that the measure $\nu$ has the SRB property (absolutely continuous disintegrations along unstable manifolds). Note that when $f$ satisfies the hypothesis of \hyperlink{theoremD}{Theorem D}, so does $f^n$ for every $n \geq 1$. In particular, $\nu$ is an ergodic SRB measure for $F^n$. We are then in the hypotheses of the following theorem.

\begin{theorem}[Ledrappier] \label{thm:ledrappier}
 Let $\nu$ be an hyperbolic ergodic SRB measure for some diffeomorphism $F:L\to L$ of class $\mathcal C^2$. If $(F^n,\nu)$ is ergodic for every $n\geq 1$ then $(F,\nu)$ is Bernoulli.
 \end{theorem}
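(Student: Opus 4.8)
The plan is to split the statement into two parts: first, that under these hypotheses $(F,\nu)$ is a Kolmogorov automorphism, and second, that a Kolmogorov automorphism carrying a hyperbolic SRB measure of a $\mathcal C^2$ diffeomorphism is in fact very weak Bernoulli, hence Bernoulli by Ornstein's isomorphism theorem. Essentially all the work is in the first part; the second is an invocation of the Ornstein--Weiss / Chernov--Haskell / Pesin--Ledrappier machinery.

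\emph{Establishing the $K$-property.} Since $\nu$ is SRB and hyperbolic, Pesin's entropy formula holds, so $h_\nu(F)=\int\sum_i\lambda_i^+\,d\nu>0$ and we are not in the trivial zero-entropy case. Following the Ledrappier--Young construction I would produce a measurable partition $\xi$ subordinate to the unstable lamination of $F$, with the standard properties: it is increasing under $F$, the iterates $F^{-n}\xi$ generate the point partition, the entropy of $F$ is carried by $\xi$, and --- crucially using the SRB hypothesis --- the conditional measures of $\nu$ on the atoms of $\xi$ are absolutely continuous along the unstable plaques. By the Rokhlin--Sinai theory such a partition is ``perfect'', so the Pinsker $\sigma$-algebra $\Pi(F,\nu)$ coincides mod $\nu$ with the tail $\sigma$-algebra $\bigcap_{n\ge 0}\sigma(F^n\xi)$; unwinding the construction, the atoms of this $\sigma$-algebra are unions of entire global unstable manifolds of $F$, and the conditionals of $\nu$ on them stay absolutely continuous along $W^u$.

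\emph{Triviality of $\Pi(F,\nu)$.} It then remains to show that an $F$-invariant, $W^u$-saturated set $A$ with $\nu(A)>0$ whose conditionals along $W^u$ are absolutely continuous must satisfy $\nu(A)=1$. Here I would run a Hopf-type argument in the nonuniformly hyperbolic setting: using the absolute continuity of the stable holonomy on Pesin blocks --- available because $F$ is $\mathcal C^2$ and $\nu$ is hyperbolic --- one upgrades $A$ to being essentially $W^s$-saturated as well; a local product structure argument on Pesin blocks then shows that $A$ agrees mod $\nu$ with a single ergodic component of $\nu$ that is open mod zero. Since $A$ is already $F$-invariant and every power $F^n$ is ergodic, this forces $\nu(A)\in\{0,1\}$. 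Thus $\Pi(F,\nu)$ is trivial and $(F,\nu)$ is a $K$-automorphism. The hypothesis that all $F^n$ are ergodic enters precisely at this last point, to rule out a possible finite cyclic factor: a priori the argument only gives that $F$ is $K$ up to cyclically permuting finitely many pieces.

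\emph{From $K$ to Bernoulli, and the main obstacle.} A $K$-automorphism need not be Bernoulli, so one must exploit the hyperbolic geometry. Fixing a Pesin block of large measure, I would take the generating partition into local unstable plaques adapted to it and verify the \emph{very weak Bernoulli} property: combining uniform backward contraction along $W^s$ and forward expansion along $W^u$ on the block, the absolute continuity of both holonomies, and the $K$-property (which supplies near-independence from the remote past), one controls the $\bar d$-distance between the conditional law of a long future block, given most pasts, and its unconditional law; Ornstein's theorem (very weak Bernoulli implies isomorphic to a Bernoulli shift) then gives the conclusion. The genuinely delicate point is the $K$-property, and within it the passage from ``$W^u$-saturated'' to ``trivial'' while the SRB property is available \emph{only for $F$ and not for $F^{-1}$} --- indeed here $\nu$ carries extra strong-stable exponents coming from the fibre contraction of $L$, so the picture is not symmetric in stable/unstable, and everything must be routed through absolute continuity of the stable holonomy together with ergodicity of all powers. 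The very-weak-Bernoulli estimate, although lengthy, is routine once the Pesin-block geometry and the $K$-property are in hand.
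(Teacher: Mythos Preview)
The paper does not prove this theorem at all: it is stated as a black-box result and attributed to Ledrappier via the citation ``See Theorem~5.10 in \cite{LedrappierErg}.'' So there is no ``paper's own proof'' to compare against---the authors invoke the statement as an external input to deduce the Bernoulli property of their examples.

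Your sketch, on the other hand, is a reasonable outline of how the proof actually goes in Ledrappier's work and the surrounding literature (Ornstein--Weiss, Pesin, Chernov--Haskell). The two-step strategy---first showing the Pinsker $\sigma$-algebra is trivial (the $K$-property) by combining the $W^u$-saturation of Pinsker atoms with absolute continuity of stable holonomy on Pesin blocks, then upgrading $K$ to Bernoulli via a very-weak-Bernoulli estimate and Ornstein's theorem---is indeed the standard architecture. You correctly identify where the ergodicity of all powers enters (ruling out a finite cyclic factor) and you are right that the asymmetry between stable and unstable (SRB gives absolute continuity only along $W^u$) forces the Hopf argument to be routed through stable-holonomy absolute continuity rather than any SRB property for $F^{-1}$. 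The one place where your sketch is perhaps too breezy is the $K\Rightarrow$Bernoulli step in the genuinely nonuniformly hyperbolic setting: calling the VWB estimate ``routine'' undersells the work needed to control the $\bar d$-distance uniformly over Pesin blocks of varying hyperbolicity constants, but the strategy is correct.

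In short: your proposal is substantively correct and goes well beyond what the paper itself does, which is simply to quote the result.
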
 

See Theorem $5.10$ in \cite{LedrappierErg}. To conclude, if $f$ is area preserving, $C^2$, NUH, and the area $\mu$ is ergodic for all the positive iterates of $f$, then $(f,\mu)$ is Bernoulli.  

\subsection{Proofs of Theorems C and D}

We are now in a position to conclude the proofs of Theorems C and D. We start with the latter.

\begin{proof}[Proof of \hyperlink{theoremD}{Theorem D} ]\label{proofD}

	Proposition \ref{local_ergodicity} says that when $f$
	is non-uniformly hyperbolic with an almost everywhere uniform lower
	bound on the diameter of the stable manifold, then $f$ is locally ergodic.
	That is, $\mu$ is a finite or
	countable convex
	combination of measures of the form $\mu_i =  \frac{\mu \vert U_i}{\mu(U_i)}$, 
	where $U_i$ are non-empty pairwise
	disjoint open sets and each $\mu_i$  is ergodic. 
	Ergodicity of the $\mu_i$ implies that $\mu(U_i \Delta f^{-1}(U_i)) = 0$ for
	every $i$. Transitivity then implies that the decomposition is
	trivial, so $\mu$ is ergodic.
	Finally, if $f$ is non-uniformly hyperbolic with an almost everywhere uniform lower bound for the diameter of the stable manifold, the same holds for any positive iterate $f^n$ of $f$. Therefore, transitivity of $f^n$ implies ergodicity of $f^n$. Hence, by Theorem \ref{thm:ledrappier}, $f$ is Bernoulli.

\end{proof}

\begin{proof}[Proof of \hyperlink{theoremC}{Theorem C}]
The fact that $\pm 1$ are not in the spectrum of $E$ permit us to use \cite{Andersson2016} to deduce transitivity of any area preserving endomorphism isotopic to $E$. By the Proposition \ref{cor:largestable} we have a $\mathcal C^1$ open subset of $[E]\cap\mathcal U$ of endomorphisms satisfying the hypotheses of Proposition \ref{local_ergodicity}, and are thus stably ergodic by \hyperlink{theoremD}{Theorem D}.

	Note also that, since $E$ is real with $| \det E | >1 $, if $\pm 1$ is
	not in the spectrum of $E$, nor can they be in the spectrum of $E^n$
	for $n \geq 1$. Indeed, if that were the case, then $E$ would have a
	conjugate pair of non-real eigenvalues on the unit circle. But that
	would mean that $|\det E| = 1$, a contradiction.
	It follows from \cite{Andersson2016} that $f^n$ is transitive for every $n \geq 1$. Hence, by \hyperlink{theoremD}{Theorem D}, $f$ is Bernoulli.
\end{proof}

\appendix
\section{Appendix: The solenoid and its invariant measures}\hypertarget{sec:appendix}{}

It is well-known among dynamicists that the inverse limit of an expanding circle map is homeomorphic to a so-called \emph{solenoidal attractor}, as introduced by Smale in \cite{SmaleBull} and later studied in depth by Williams \cite{Williams74}. The solenoidal attractor is an example of  a \emph{solenoidal manifold}, i.e. a topological space which is locally a product of a disk by a Cantor set. What is less known is that all this remains true for non-invertible maps in general, be them expanding or not --- as long as they are covering maps from the manifold to itself. In particular, given any surjective $C^1$ map $f: M \to M$ without critical points, its inverse limit is a solenoidal manifold. 

Although the literature on topological aspects of solenoidal manifolds is rather comprehensive (see \cite{Verjovsky22} and the references therein), an elementary and dynamic-centered exposition, including a description of invariant measures, seem to be in want of. The present work relies quite heavily on the solenoidal nature of $\hTTf$ and we think the time is ripe for a more detailed exposition. In particular, we wish to describe how $f$-invariant measures on $\TT$ ``look'' when lifted to $\hTTf$. Another thing we want to emphasize is that when $f$ and $g$ are homotopic, the spaces $\hTTf$ and $\mathbf{L}_g$ are homeomorphic; and if $f$ and $g$ are close, then $\hTTf$ and $\mathbf{L}_g$  are also close in some sense. This is particularly useful in section \ref{sec:continuity}, where we consider how measures on the projective bundles $\Pp \hTTf$ vary with the dynamics. Our exposition revolves around the construction of an abstract space ``$\Sol$'', depending on the homotopy class of $f$, to which $\hTTf$ is homeomorphic. Although our presentation deals specifically with maps on $\TT$, most of the construction can be carried over effortlessly to covering maps of other manifolds.

\subsection{The solenoid as a quotient space}

Let $E \in \Mat$ with $ d = |\det E| \geq 2$. Let $\Sigma = \{1, \ldots, d\}^{\N}$ be the set of one-sided sequences of symbols in $\{ 1, \ldots, d\}$ endowed with the (weak) product topology induced by the discrete topology on $\{1, \ldots, d\}$. Fix vectors $w_1, \ldots, w_d \in \ZZ$ such that $\{[w_1], \ldots, [w_d]\} = \ZZ / E(\ZZ)$, where $[w_i]$ denotes the element of $\ZZ / E(\ZZ)$ containing $w_i$. Upon possibly reordering, we may (and do) assume that $w_1 \in E(\ZZ)$, i.e. $[w_1] = [\underline{0}]$ is the identity element of $\ZZ / E(\ZZ)$.

We shall define a group $G$ of transformations on $\RR \times \Sigma$. In fact, $G$ is a group action of $\ZZ$ on $\RR \times \Sigma$ and, as such, it acts freely and properly discontinuously. We may thus form a space
\[ \Sol = (\RR \times \Sigma) / G,\]
called \emph{the solenoid} of $E$. It is a \emph{solenoidal 2-manifold}, meaning that it is locally a product of the Cantor set $\Sigma$ and a 2-dimensional disk. We denote the orbit of $G$ of $(\tx, \bomega) \in \RR \times \Sigma$ by $[(\tx, \bomega)]_G$ and the quotient map $(\tx, \bomega) \mapsto [(\tx, \bomega)]_G$ by $\pi_G$. As we shall see, the space $\Sol$ has the property that whenever $f: \TT \to \TT$ is a self-cover homotopic to $E$, then the natural extension $\hTTf$ of $f$ is homeomorphic to $\Sol$.  Furthermore, once we fix a lift $\tf: \RR \to \RR$ of $f$, a canonical homeomorphism $\phi: \Sol \to \hTTf$ exists for which the map $\fsol = \phi^{-1}\circ \hf \circ \phi$ has a particularly nice expression. Thus $\Sol$ can be used as a replacement for $\hTTf$ as a tool for analyzing the dynamics of $f$ itself.

\begin{remark}
In foliation theory, the previous construction is called the suspension construction. See for example Chapter $3$ in \cite{Foliated}.
\end{remark}

\subsection{Definition of \texorpdfstring{$G$}{G}}
The group $G$ consists of maps 
\[(\tx, \bomega) \mapsto (\tx+v, \psi_v(\bomega)), \quad v \in \ZZ\] where $\psi_v: \Sigma \to \Sigma$ are homeomorphisms which we now describe:

Fix some $v \in \ZZ$. Let $\bomega = (\omega_1, \omega_2, \ldots)$. Set $u_0 = v$ and, for $n \geq 1$:
\begin{itemize}
\item let $\tau_n$ be the unique number in $\{1, \ldots, d\}$ such that 
\[w_{\tau_n}-w_{\omega_n}+u_{n-1} \in E(\ZZ) \text{, and}\]
\item let $u_n$ be the unique element of $\ZZ$ such that 
\[w_{\tau_n}-w_{\omega_n}+u_{n-1} = E u_n.\]
\end{itemize} 
Finally set $\psi_v(\bomega) = (\tau_1, \tau_2, \ldots)$.

In order to understand what this group does, let us introduce some notation. Fix a lift $\tf: \RR \to \RR$ of $f$. For $i \in \{1, \ldots, d\}$, let $T_i: \RR \to \RR$ be the translation $\tx \mapsto \tx+w_i$, and write $\cF_i = \tf^{-1}\circ  T_i$. The following can be checked by straightforward induction on $n$.

\begin{proposition} \label{difference_in_Z2}
Let $\tx \in \RR$, $v \in \ZZ$, and $\bomega, \boldsymbol{\tau} \in \Sigma$. Then the following are equivalent:
\begin{enumerate}
\item $\psi_v(\bomega) = \boldsymbol{\tau}$, 
\item $\cF_{\tau_n} \circ \ldots \circ \cF_{\tau_1} (\tx+v) -\cF_{\omega_n} \circ \ldots \circ \cF_{\omega_1}(\tx) \in \ZZ $
for every $n\geq 1$.
\item $\cF_{\tau_n} \circ \ldots \circ \cF_{\tau_1} (\tx+v) -\cF_{\omega_n} \circ \ldots \circ \cF_{\omega_1}(\tx) = u_n $
for every $n\geq 1$.
\end{enumerate}
\end{proposition}

In other words, calculating pre-orbits of a point $x \in \TT$ can be done using a representative $\tx \in \RR$ and an appropriate sequence of the maps $\cF_1, \dots, \cF_d$. But the sequence of maps to be used depends on the representative $\tx$, and the map $\psi_v$ describes how the sequence changes when using the representative $\tx+v$ instead of $\tx$.

\subsection{A canonical homeomorphism \texorpdfstring{$\Sol \to \hTTf$}{Sol to Lf}}

  We define a map $\tilde{\phi}: \RR \times \Sigma \to \hTTf$ by setting 
\[\tilde{\phi} (\tx, \omega_1, \omega_2, \ldots) = (x_0, x_1, x_2, \ldots),\]
where 
\begin{equation} \label{x_i}
x_i = \pi_{cov} \circ  \cF_{\omega_i} \circ \ldots \circ \cF_{\omega_1} (\tx).
\end{equation}

It follows from (\ref{difference_in_Z2}) that $\tilde{\phi}$ is $G$-invariant, i.e. 
\[\tilde{\phi}(\tx, \bomega) = \tilde{\phi}(\tx+v, \psi_{v}(\bomega))\]
for every $v \in \ZZ$. Hence $\tilde{\phi}$ induces a map $\phi: \Sol \to \hTTf$ such 
\begin{equation}
\begin{tikzcd}
\RR \times \Sigma \ar[rr, "\tilde{\phi}"] \ar[rd, "\pi_G"] && \hTTf \\
& \Sol \ar[ur, "\phi"]
\end{tikzcd}
\end{equation}
commutes.

\begin{proposition} \label{phi_is_homeo}
The map  $\phi$ is a homeomorphism. 
\end{proposition}

\begin{proof}
Proposition \ref{difference_in_Z2} implies that $\tilde{\phi}(\tx, \bomega) = \tilde{\phi}(\ty, \boldsymbol{\tau})$ if and only if $(\tx, \bomega)$ and $(\ty, \boldsymbol{\tau})$ lie in the same $G$-orbit. Moreover, it is clear that $\tilde{\phi}$ is a continuous surjection. Hence (see  \cite[Corollary 1.10 and Example 1.5]{Rotman88}) it suffices to show that $\tilde{\phi}$ is an open mapping. To see why $\tilde{\phi}$ is open it suffices to check that the image under $\tilde{\phi}$ of open rectangles of the form
\begin{equation} \label{rectangle}
U \times C(\omega_1, \ldots, \omega_n)
\end{equation}
are open, as these form a basis of the topology on $\RR \times \Sigma$. Now, the image of (\ref{rectangle}) under $\tilde{\phi}$ is 
\begin{equation} \label{image_of_rectangle}
\hTTf \cap \{ (x_1, x_2, \ldots) \in (\TT)^{\N}: x_i \in U_i \text{ for }1\leq i \leq n\},
\end{equation}
where
\begin{equation} \label{U_i}
U_i = \pcov \circ \cF_{\omega_n} \circ \ldots \circ \cF_{\omega_1}(U).
\end{equation}
Since the right hand side of (\ref{U_i}) is a composition of open mappings, it follows that (\ref{image_of_rectangle}) is open in $\hTTf$. 
\end{proof}

In view of Proposition \ref{phi_is_homeo}, the natural extension $\hf : \hTTf \to \hTTf$ induces a homeomorphism $\fsol: \Sol \to \Sol$ by setting $\fsol = \phi^{-1} \circ \hf \circ \phi$. It turns out that $\fsol$ itself has a very natural expression, which we now describe.

Let $F, F^{\sharp}: \RR \times \Sigma \to \RR \times \Sigma$ be the maps 
\begin{align*}
(\tx, \omega_1, \omega_2, \ldots) & \overset{F}{\mapsto} (\tf(\tx), 1, \omega_1, \omega_2, \ldots) \\
(\tx, \omega_1, \omega_2, \ldots) & \overset{F^\sharp}{\mapsto} (\cF_{\omega_1}(\tx), \omega_2, \omega_3, \dots).
\end{align*}

(Recall that we have chosen the $w_i$ so that $[w_1]= [\underline{0}]$.) The map $F$ is not surjective and the map $F^\sharp$ is not injective, but $F^\sharp \circ F$ is the identity on $\RR \times \Sigma$.
The following proposition follows straight from the definition of $\phi$ and $\fsol$.

\begin{proposition}\label{expression_of_fsol}
The map $\fsol$ is given by
\[\fsol([(\tx, \bomega)]_G) = [F(\tx, \bomega)]_G.\] 

The inverse of $\fsol$ is given by
\[\fsol^{-1}([(\tx, \bomega)]_G) = [F^\sharp (\tx, \bomega)]_G.\]
\end{proposition}

The construction of $\Sol$ and its dynamics $\fsol$ is clarified by the following commuting diagram. 
\begin{equation}
\begin{tikzcd}
\RR \times \Sigma 
\ar[loop, out=120, in=60, distance=20, "F"]
\ar[r, "\pi_G"] \ar[d, "\proj_1", swap] 
& \Sol  \ar[r, shift left=0.4ex,  "\phi"] \ar[d, "\pi_{\Sol}", swap] \ar[loop, out=120, in=60, distance=20, "\fsol"]
& \hTTf  \ar[l, shift left=0.4ex, "\phi^{-1}"] \ar[ld, "\pext"]  \ar[loop, out=120, in=60, distance=20, "\hf"] \\
\RR \ar[r, "\pcov"] 
\ar[loop, out=210, in=150, distance=20, "\tf"]
& \TT \ar[loop, out=300, in=240, distance=20, "f"]
\end{tikzcd}
\end{equation}

\subsection{Invariant measures on \texorpdfstring{$\Sol$}{Sol}} \label{invariant_on_sol}

Generally speaking, if $(\tilde{X}, p)$ is a covering space\footnote{We do not make any connectivity assumptions neither on $X$ nor $\tilde{X}$, but we do require deck transformations to act transitively on fibers.} of $X$, then any (Borel) measure $\mu$ on $X$ induces a unique (Borel) measure $\tmu$ on $ \tilde{X}$ by setting $\tmu(A) = \mu(p(A))$ whenever $p \vert A$ is injective and extending it accordingly. We refer to $\mu$ as \emph{the lift} of $\mu$. Such a measure is necessarily invariant under any deck transformation of the covering.

Conversely, any measure $\tmu$ on $\tilde{X}$ invariant under deck transformations is the lift of a unique measure $\mu$ on $X$. We say that $\tmu$ \emph{descends} to $\mu$ whenever this is the case. Note that $\tmu$ descends to $\mu$ if and only if $\mu = p_*(\tmu \vert D)$ where $D$ is a fundamental domain of the covering space.

Now suppose we have a second covering space $(\tilde{Y}, q)$ of some space $Y$ and maps $\tilde{\pi}: \tilde{Y} \to \tilde{X}$, $\pi: Y \to X$ such that
\[
\begin{tikzcd}
\tilde{Y} \ar[r, "q"] \ar[d, "\tilde{\pi}"] & Y \ar[d, "\pi"] \\
\tilde{X} \ar[r, "p"] & X
\end{tikzcd}
\]
commutes. If $\nu$ is a measure on $Y$ which lifts to $\tilde{\nu}$ on $\tilde{Y}$, then $\tilde{\pi}_* \tilde{\nu} = \tmu$ if and only if $\pi_*\nu = \mu$. 

It is well known that, given any $f$-invariant measure, there is precisely one $\hf$-invariant measure which projects on $\TT$ by $\pext$. Now, since $\fsol$ is conjugated to $\hf$ and $\pext \circ \phi = \pi_{\Sol}$, it follows that there is precisely one $\fsol$-invariant measure $\bmu$ on $\Sol$ which projects to $\mu$ through $\pi_{\Sol}$. One of the advantages of working with $\Sol$ rather than with $\hTTf$ is that $\bmu$ has a particularly intuitive description. 

Let $\tilde{\bmu}$ be the lift of $\bmu$ to $\RR \times \Sigma$ and $\tmu$ the lift of $\mu$ on $\RR$. Since $(\pi_{\Sol})_*\bmu = \mu$ we must have that $(\proj_1)_* \tilde{\bmu} = \tmu$. In other words, 
\[\tilde{\bmu}(A \times \Sigma)  = \tmu(A)\]
for every measurable $A \subset \RR$.  Invariance of $\bmu$ under $\fsol$, together with Proposition \ref{expression_of_fsol} implies that

\begin{equation} \label{boldmutilde} 
\tilde{\bmu}(A \times C(\omega_1, \ldots, \omega_n)) = \tmu(\cF_{\omega_n} \circ \ldots \circ \cF_{\omega_1}(A)).
\end{equation}
Here $C(\omega_1, \ldots, \omega_n)$ denotes the cylinder set 
\[\{(\tau_1, \tau_2, \ldots) \in \Sigma: \tau_i = \omega_i \text{ for } 1 \leq i \leq n \}.\]

Sets of the form $A \times C(\omega_1, \ldots, \omega_n)$ form a semi-ring which generates the Borel $\sigma$-algebra on $\RR \times \Sigma$. Hence (\ref{boldmutilde}) determines $\tilde{\bmu}$. 

\begin{remark}
If we did not already know about the existence of $\bmu$, we could simply lift $\mu$ to $\tmu$ and use (\ref{boldmutilde}) to define $\tilde{\bmu}$ on $\RR \times \Sigma$. This measure, by construction, is $G$-invariant and descends to an $\fsol$-invariant measure $\bmu$ on $\Sol$, which in turn projects to $\mu$ through $\pi_{\Sol}$. Again, since any such measure must satisfy (\ref{boldmutilde}), it is the unique $\fsol$-invariant measure pith this property. 
\end{remark}

Below is a schematic illustration of how $\mu$, $\hmu$, $\tmu$, $\bmu$, and $\tilde{\bmu}$ are related. A filled arrow ($\rightarrow$) indicates that measures are related by push-forward, whereas a dashed arrow ($\dashrightarrow$) indicates that the former descends to the latter.

\begin{equation} \label{relation_measures}
\begin{tikzcd}
(\RR \times \Sigma, \tilde{\bmu}) 
\ar[r, dashed, "\pi_G"] \ar[d, "\proj_1"] 
& (\Sol, \bmu)  \ar[r, shift left=0.3ex, "\phi"] \ar[d, "\pi_{\Sol}"] \ar[loop, out=120, in=60, distance=20, "\fsol"]
& (\hTTf, \hmu)  \ar[l, shift left=0.3ex, "\phi^{-1}"] \ar[ld, "\pext"]  \ar[loop, out=120, in=60, distance=20, "\hf"] \\
(\RR, \tmu) \ar[r, dashed, "\pcov"] 
& (\TT, \mu) \ar[loop, out=300, in=240, distance=20, "f"]
\end{tikzcd}
\end{equation}

\begin{remark}\label{not_invariant}
The maps $\tf: \RR \to \RR$ and $F: \RR \times \Sigma \to \RR \to \Sigma$ have been left out from (\ref{relation_measures}) as they are not measure-preserving. Indeed $\tf$ is invertible on $\RR$ and has a Jacobian strictly larger than $1$ with respect to $\tmu$. The push-forward of $\tbmu$ under $F$ is the restriction of $\tbmu$ to $\RR \times C(\omega_1)$ (the image of $F$).
\end{remark}

\subsection{Lifting the Haar measure to \texorpdfstring{$\Sol$}{Sol}}

We now turn to the particular case in which $\mu$ is the Haar measure on $\TT$. Its lift $\tmu$ on $\RR$ is the Lebesgue measure. Hence (\ref{boldmutilde}) can be written as
\begin{align}\label{tbmu}
\tilde{\bmu}(A\times C(\omega_1, \dots, \omega_n)) & = \int_{A} | \det D (\cF_{\omega_n} \circ \ldots \circ \cF_{\omega_1})(\tx)| \ d\tmu(\tx) \\
& = \int_A \tilde{\bmu}_{\tx} (C(\omega_1, \ldots \omega_n)) \ d\tmu(\tx),
\end{align}

where $\tilde{\bmu}_{\tx}$ is the unique measure on $\Sigma$ such that 
\begin{equation} \label{boldmutildextilde}
\tilde{\bmu}_{\tx} (C(\omega_1, \ldots \omega_n)) = | \det D (\cF_{\omega_n} \circ \ldots \circ \cF_{\omega_1})(\tx) |
\end{equation}
 for each cylinder $C(\omega_1, \ldots, \omega_n)$. 
 
The right hand side of (\ref{boldmutildextilde}) is a $G$-invariant function on $\RR \times \Sigma$. Hence 
\[(\psi_v)_* \tilde{\bmu}_{\tx} = \tilde{\bmu}_{\tx  + v}\]
for every $v \in \ZZ$, and $\tilde{\bmu}_{\tx}$ descends to a measure $\bmu_{x}$ on $\Sol$, where $x = \pcov(\tx)$. The push-forward $\hmu_x$ of this measure under $\phi$ is the same as the push-forward of $\tilde{\bmu}_{\tx}$ under $\tilde{\phi}$. Of course, for every $(\tx, \omega_1, \omega_2, \ldots) \in \tilde{\phi}^{-1}(x_0, x_1, \ldots)$ and every $n \geq 1$ we have
\[|\det D (\cF_{\omega_n} \circ \ldots \circ \cF_{\omega_1})(\tx)| = |\det D f^n(x_n)|^{-1},\]
so that 
\begin{equation} \label{mux}
\hmu_x(\{ (\xi_1, \xi_2, \ldots) \in \hTTf : \xi_n = x_n \}) = |\det Df^n(x_n)|^{-1}.
\end{equation}

Extending (\ref{tbmu}) to the full $\sigma$-algebra on $\RR \times \Sigma$ and descending to $\Sol$ gives
\begin{equation} \label{bmu}
\bmu(A) = \int_{\TT} \bmu_x(A) \ d\mu(x)
\end{equation}
for every measurable $A \subset \Sol$. Similarly,
\begin{equation}
\mu(A) = \int_{\TT} \hmu_x(A) \ d\mu(x)
\end{equation}
for every measurable $A \subset \hTTf$.  

\subsection{Continuity}

The measures $\hmu$, $\hmu_x$, $\bmu_x$, $\tilde{\bmu}_{\tx}$, $\bmu$ and $\tilde{\bmu}$ all depend on the map $f$. Let us stress this dependence by calling them $\hmu^f$, $\hmu_x^f$, $\bmu_x^f$, $\tilde{\bmu}_{\tx}^f$ $\bmu^f$ and $\tilde{\bmu}^f$ instead. The following is obvious from (\ref{tbmu}),  (\ref{boldmutildextilde}), (\ref{mux}) and (\ref{bmu}).

\begin{proposition}\label{prop:a4}
The measures $\hmu_x^f$, $\bmu_x^f$, $\tilde{\bmu}_{\tx}^f$ $\bmu^f$ and $\tilde{\bmu}^f$ depend continuously on $f$ in the weak* topology in their respective spaces, when seen as maps from $\operatorname{End}_\mu^1(\TT)$. Moreover, the measures $\hmu_x^f$, $\bmu_x^f$ and $\tilde{\bmu}_{\tx}$ vary continuously with $x$ in the weak* topology in their respective spaces, when seen as maps from $\TT$ and $\RR$ respectively.
\end{proposition}

If we consider the measure $\hmu^f$ as a measure on $(\TT)^{\Z_+}$, supported on $\hTTf$, rather than a measure on $\hTTf$ itself, then this measure also varies continuously with $f \in \operatorname{End}_\mu^1(\TT)$.

 \subsection{Projective bundles}

 Each of the spaces $\hTTf$, $\Sol$ and $\RR \times \Sigma$ comes with a trivial bundle
 \begin{align*}
& \Pp \hTTf  = \hTTf \times \Pp \RR \\
& \Pp \Sol   = \Sol \times \Pp \RR \\
& \Pp (\RR \times \Sigma)  = \RR \times \Sigma \times \Pp \RR.
 \end{align*}
On each of these bundles, the derivative of $f$ induces bundle maps $P\hf$, $PSf$ and $PF$, respectively, given by
\begin{align*}
 (\hx, [v]) & \overset{P\hf}{\mapsto} (\hf(\hx), [Df_{\pext(\hx)} v]) \\
 ([\tx, \bomega]_G, [v]) & \overset{PSf}{\mapsto} (Sf ([\tx, \bomega]_G), [Df_{\tx} v]) \\
 (\tx, \bomega, [v]) & \overset{PF}{\mapsto} (F(\tx, \bomega), [Df_{\tx} v])
 \end{align*}
 respectively.

 Let $i$ be the identity map on $\Pp \RR$. Then the following diagram commutes.

 \begin{equation}
\begin{tikzcd}
\Pp(\RR \times \Sigma) \ar[r, "\pi_G \times i"] \ar[d, "\proj_1"] \ar[loop, out=120, in=60, distance=20, "PF"]
& \Pp \Sol \ar[r, "\phi \times i", shift left = 0.3ex] 
\ar[d, "\proj_1"], \ar[loop, out=120, in=60, distance=20, "P\fsol"]
& \Pp \hTTf \ar[d, "\proj_1"] 
\ar[l, "\phi^{-1} \times i", shift left = 0.3ex] 
\ar[loop, out=120, in=60, distance=20, "P\hf"] \\
\RR \times \Sigma \ar[r, "\pi_G"] \ar[loop, out=300, in=240, distance=20, "F"]
& \Sol \ar[r, "\phi", shift left = 0.3ex] \ar[loop, out=300, in=240, distance=20, "\fsol"]
& \hTTf \ar[l, "\phi^{-1}", shift left = 0.3ex] \ar[loop, out=300, in=240, distance=20, "\hf"]
\end{tikzcd}
 \end{equation}

 A $PS f$-invariant measure $\bmu^p$ corresponds to a $P \hf$-invariant measure $\hmu^p = (\phi, i)_*\bmu^p$ and vice versa. Moreover, $\bmu^p$ lifts to a unique measure $\tbmu^p$ on $\Pp(\RR \times \Sigma)$ through $\pi_G \times i$, which is the same as the lift of $\hmu^p$ through $\tilde{\phi}\times i$. This measure is not invariant under $PF$ (see remark \ref{not_invariant}). However

 \begin{proposition}\label{convergence_on_projetive_bundles}
Given a measure $\bmu^{p}$ on $\Pp \Sol$ and a sequence of measures $\bmu_n^{p}$ on $\Pp \Sol$ with corresponding lifts $\tbmu_n^{p}$ and $\tbmu_n^{p}$ on $\Pp (\RR \times \Sigma)$, we have that $\bmu_n^{p}$ converges (weakly*) to $\bmu^{p}$ if and only if $\tbmu_n^{p}$ converges (weakly*) to $\tbmu^{p}$. 
 \end{proposition}

 Now, $\tbmu_n^p$ and $\tbmu^p$ are all $G \times\{i\}$-invariant. Therefore, to say that $\tbmu_n^p$ converges to $\tbmu^p$, it suffices to show that $\tbmu_n^p \vert Q$ converges to $\tbmu^p \vert Q$, where $Q \subset \Pp (\RR \times \Sigma)$ is any set containing a fundamental domain of the action $G \times \{ i \}$ and satisfying $\tbmu^p( \partial Q)=0.$ In the proof of \hyperlink{theoremB}{Theorem B} we do this by taking $Q = [0,1] \times \Sigma \times \Pp \RR$.

\addcontentsline{toc}{section}{References}	
\bibliographystyle{alpha}

\begin{thebibliography}{HHTU11}

\bibitem[ABV00]{SRBMostexp}
J.~Alves, C.~Bonatti, and M.~Viana.
\newblock {SRB measures for partially hyperbolic systems whose central
  direction is mostly expanding}.
\newblock {\em Invent. Math}, 140(2):351--298, 2000.

\bibitem[ACW21]{Avila2021}
A.~Avila, S.~Crovisier, and A.~Wilkinson.
\newblock {$\mathcal{C}^1$ density of stable ergodicity}.
\newblock {\em Advances in Mathematics}, 379, 2021.

\bibitem[And10]{Andersson2010}
M.~Andersson.
\newblock Robust ergodic properties in partially hyperbolic dynamics.
\newblock {\em Trans. Amer. Math. Soc.}, 362(4):1831--1867, 2010.

\bibitem[And16]{Andersson2016}
M.~Andersson.
\newblock Transitivity of conservative toral endomorphisms.
\newblock {\em Nonlinearity}, 29(3):1047--1055, 2016.

\bibitem[BC14]{NUHD}
P.~Berger and P.~D. Carrasco.
\newblock Non-uniformly hyperbolic diffeomorphisms derived from the standard
  map.
\newblock {\em Communications in Mathematical Physics}, 329(1):239--262, 2014.

\bibitem[BFP06]{BoFaPu2006}
J.~Bochi, B.~R. Fayad, and E.~Pujals.
\newblock A remark on conservative diffeomorphisms.
\newblock {\em Comptes Rendus Mathematique}, 342(10):763--766, 2006.

\bibitem[BKH22]{Bochi2019}
J.~Bochi, A.~Katok, and F.~Rodriguez Hertz.
\newblock Flexibility of {L}yapunov exponents.
\newblock {\em Ergodic Theory and Dynamical Systems}, 42(2):554--591, 2022.

\bibitem[Boc02]{BOCHI2002}
J.~Bochi.
\newblock Genericity of zero {L}yapunov exponents.
\newblock {\em Ergodic Theory and Dynamical Systems}, 22(06):1667--1696, 2002.

\bibitem[BP13]{Barreira2013}
L.~Barreira and Y.~Pesin.
\newblock {\em Introduction to smooth ergodic theory}.
\newblock American Mathematical Society, Providence, Rhode Island, 2013.

\bibitem[BRH17]{Brown2017}
A.~Brown and F.~Rodriguez-Hertz.
\newblock Measure rigidity for random dynamics on surfaces and related skew
  products.
\newblock {\em Journal of the American Mathematical Society}, 30(4):1055--1132,
  2017.

\bibitem[BW10]{ErgPH}
K.~Burns and A.~Wilkinson.
\newblock {On the Ergodicity of Partially Hyperbolic Systems}.
\newblock {\em Annals of Mathematics}, 171:451--489, 2010.

\bibitem[BXY17]{LyaRandom}
A.~Blumenthal, J.~Xue, and L.-S. Young.
\newblock {Lyapunov exponents for random perturbations of some area-preserving
  maps including the standard map}.
\newblock {\em Annals of Mathematics}, 185:285--310, 2017.

\bibitem[Car20]{RandomProdSt}
P.~D. Carrasco.
\newblock Random products of standard maps.
\newblock {\em Communications of Mathematical Physics}, 377(2):773--810, 2020.

\bibitem[Chu20]{Ch20}
Ping~Ngai Chung.
\newblock Stationary measures and orbit closures of uniformly expanding random
  dynamical systems on surfaces.
\newblock {\em arXiv:2006.03166}, 2020.

\bibitem[CS08]{Chirikov2008}
B.~Chirikov and D.~Shepelyansky.
\newblock Chirikov standard map.
\newblock {\em Scholarpedia}, 3(3):3550, 2008.

\bibitem[CS22]{extendedflex}
P.~D. Carrasco and R.~Saghin.
\newblock {Extended flexibility of Lyapunov exponents for Anosov
  diffeomorphisms}.
\newblock {\em Trans Amer Math Soc}, 375(5):3411--3449, 2022.

\bibitem[DK07]{DK07}
D.~Dolgopyat and R.~Krikorian.
\newblock On simultaneous linearization of diffeomorphisms of the sphere.
\newblock {\em Duke Mathematical Journal}, 136(3), 2007.

\bibitem[DP02]{EveryCompHyp}
D.~Dolgopyat and Y.~Pesin.
\newblock {Every Compact Manifold Carries a Completely Hyperbolic
  Diffeomorphism}.
\newblock {\em Ergodic Theory and Dynamical Systems}, (22):409--435, 2002.

\bibitem[FK60]{FurstKest60}
H.~Furstenberg and H.~Kesten.
\newblock Products of random matrices.
\newblock {\em Annals of Mathematical Statistics}, 31:457--469, 1960.

\bibitem[GPS94]{StErgGeo}
M.~Grayson, C.~Pugh, and M.~Shub.
\newblock {Stably Ergodic Diffeomorphisms}.
\newblock {\em Annals of Mathematics}, 140(2):295--329, 1994.

\bibitem[HHTU11]{Hertz2011}
F.~Rodriguez Hertz, M.~A.~Rodriguez Hertz, A.~Tahzibi, and R.~Ures.
\newblock {Uniqueness of SRB Measures for Transitive Diffeomorphisms on
  Surfaces}.
\newblock {\em Communications in Mathematical Physics}, 306(1):35--49, 2011.

\bibitem[HHU08]{AccStaErg}
F.~Rodriguez Hertz, M.~A.~Rodriguez Hertz, and R.~Ures.
\newblock Accessibility and stable ergodicity for partially hyperbolic
  diffeomorphisms with 1{D}-center bundle.
\newblock {\em Inventiones Mathimaticae}, 172:353--381, 2008.

\bibitem[Jac09]{Jacobson2009}
N.~Jacobson.
\newblock {\em {Basic Algebra I}}.
\newblock Dover Pubn Inc, 2009.

\bibitem[Jan23]{Janeiro2023}
V.~Janeiro.
\newblock {Existence of Robust Non-Uniformly Hyperbolic Endomorphism in
  Homotopy Classes}.
\newblock {\em Journal of Dynamical and Control Systems}, 29(4):2023--2039,
  2023.

\bibitem[Kat79]{BerSurf}
A.~Katok.
\newblock {Bernoulli Diffeomorphisms on Surfaces}.
\newblock {\em Annals of Mathematics}, 2(110):529--547, 1979.

\bibitem[Led84]{LedrappierErg}
F.~Ledrappier.
\newblock Propri\'et\'es ergodiques des mesures de {Sina{\"\i}}.
\newblock {\em Publications Math\'ematiques de l'IH\'ES}, 59:163--188, 1984.

\bibitem[Liu08]{Liu2008}
P.-D. Liu.
\newblock {Invariant Measures Satisfying an Equality Relating Entropy, Folding
  Entropy and Negative Lyapunov Exponents}.
\newblock {\em Communications in Mathematical Physics}, 284(2):391--406, 2008.

\bibitem[Liu16]{Liu16}
X.~Liu.
\newblock {\em Lyapunov exponents approximation, symplectic cocycle deformation
  and alarge deviation theorem}.
\newblock PhD thesis, IMPA, 2016.

\bibitem[LMY18]{Liang2018}
C.~Liang, K.~Marin, and J.~Yang.
\newblock Lyapunov exponents of partially hyperbolic volume-preserving maps
  with 2-dimensional center bundle.
\newblock {\em Annales de l{\textquotesingle}Institut Henri Poincar{\'{e}} C,
  Analyse non lin{\'{e}}aire}, 35(6):1687--1706, 2018.

\bibitem[LOP24]{Lima2024}
Y.~Lima, D.~Obata, and M.~Poletti.
\newblock Measures of maximal entropy for non-uniformly hyperbolic maps.
\newblock {\em arXiv: 2405.04676}, 2024.

\bibitem[LS11]{Liu2011}
P.~D. Liu and L.~Shu.
\newblock Absolute continuity of hyperbolic invariant measures for
  endomorphisms.
\newblock {\em Nonlinearity}, 24(5):1595--1611, 2011.

\bibitem[LY85]{LedYoungI}
F.~Ledrappier and L.-S. Young.
\newblock The metric entropy of diffeomorphisms. {I}. {C}haracterization of
  measures satisfying {P}esin's entropy formula.
\newblock {\em Ann. of Mathematics}, 122:509--539, 1985.

\bibitem[MS12]{Foliated}
C.~C. Moore and C.~Schochet.
\newblock {\em {Global Analysis on Foliated Spaces}}.
\newblock Springer New York, 2012.

\bibitem[Oba20]{Obata}
D.~Obata.
\newblock On the stable ergodicity of {B}erger-{C}arrasco's example.
\newblock {\em Ergodic Theory Dynam. Systems}, 40(4):1008--1056, 2020.

\bibitem[Ose68]{Oseledets}
V.~I. Oseledets.
\newblock {A multiplicative ergodic theorem. Lyapunov characteristic numbers
  for dynamical systems.}
\newblock {\em Transactions of the Moscu Mathematical Society}, 19:197--231,
  1968.

\bibitem[PS89]{Pugh1989}
C.~Pugh and M.~Shub.
\newblock Ergodic attractors.
\newblock {\em Transactions of the American Mathematical Society},
  312(1):1--54, 1989.

\bibitem[QZ02]{pesinendo}
M.~Qian and S.~Zhu.
\newblock {SRB Measures and Pesin's Entropy Formula for Endomorphisms}.
\newblock {\em Transactions of the American Mathematical Society},
  354(4):1453--1471, 2002.

\bibitem[Rot88]{Rotman88}
J.~Rotman.
\newblock {\em An introduction to algebraic topology}, volume 119 of {\em
  Graduate Texts in Mathematics}.
\newblock Springer-Verlag, New York, 1988.

\bibitem[Rue79]{Ruelle1979}
D.~Ruelle.
\newblock Ergodic theory of differentiable dynamical systems.
\newblock {\em Publications math{\'{e}}matiques de
  l{\textquotesingle}{IH}{\'{E}}S}, 50(1):27--58, dec 1979.

\bibitem[RV23]{Ramirez2023}
S.~Ramírez and K.~Vivas.
\newblock Non-uniform hyperbolicity of maps on $\mathbb{T}^2$.
\newblock {\em arXiv:2312.16742}, 2023.

\bibitem[Shu69]{shubendo}
M.~Shub.
\newblock Endomorphisms of compact differentiable manifolds.
\newblock {\em American Journal of Mathematics}, 91(1):175--199, 1969.

\bibitem[Sma67]{SmaleBull}
S.~Smale.
\newblock {Differentiable Dynamical Systems}.
\newblock {\em Bull. Amer. Math. Soc.}, 73(73):747--817, 1967.

\bibitem[Tsu05]{Tsujii2005}
M.~Tsujii.
\newblock Physical measures for partially hyperbolic surface endomorphisms.
\newblock {\em Acta Mathematica}, 194(1):37--132, 2005.

\bibitem[TZ24]{Tsujii2022}
M.~Tsujii and Z.~Zhang.
\newblock On bifurcation of statistical properties of partially hyperbolic
  endomorphisms.
\newblock {\em Ergodic Theory and Dynamical Systems}, 44(3):933–944, 2024.

\bibitem[Ver23]{Verjovsky22}
A.~Verjovsky.
\newblock Low-dimensional solenoidal manifolds.
\newblock {\em EMS Surv. Math. Sci.}, 10:131--178, 2023.

\bibitem[Via14]{LyaViana}
M.~Viana.
\newblock {\em Lectures on Lyapunov Exponents}.
\newblock Cambridge Univ. Press, 2014.

\bibitem[Via18]{VIANA2018}
M.~Viana.
\newblock {(Dis)continuity of Lyapunov exponents}.
\newblock {\em Ergodic Theory and Dynamical Systems}, 40(3):577--611, 2018.

\bibitem[VY19]{expcontendoVianaYang}
M.~Viana and J.~Yang.
\newblock {Continuity of Lyapunov exponents in the $\mathcal C^0$ topology}.
\newblock {\em Israel Journal of Mathematics}, (229):461--485, 2019.

\bibitem[Wil74]{Williams74}
R.~Williams.
\newblock Expanding attractors.
\newblock {\em Publications Math\'ematiques de l'IH\'ES}, 43:169--203, 1974.

\bibitem[Yan21]{Yang2021}
J.~Yang.
\newblock Entropy along expanding foliations.
\newblock {\em Advances in Mathematics}, 389:107893, 2021.

\end{thebibliography}

\end{document}